\documentclass[10pt,a4paper]{amsart}

\usepackage{array}
\usepackage{lipsum}
\usepackage{comment}
\usepackage{color}

\usepackage[utf8]{inputenc}
\usepackage[T1]{fontenc}
\usepackage{textcomp}
\usepackage{amsmath,amssymb,amsopn,amsthm}
\usepackage{lmodern}
\usepackage[a4paper]{geometry}
\usepackage{graphicx}
\usepackage{xcolor}
\usepackage{microtype}

\usepackage{calrsfs}

\usepackage{dsfont}

\usepackage{amsmath,amsfonts,amssymb,amsthm,mathrsfs,unicode}
\usepackage{calligra}

\usepackage{pdfpages}

\usepackage{tikz}
\usetikzlibrary{decorations.pathreplacing, arrows.meta}

\usepackage{hyperref}

\usepackage{graphicx}
\usepackage{mathtools}

\hypersetup{pdfstartview=XYZ}





\newcommand{\union}[2]{\text{$\underset{#1}{\bigcup} #2$}}


\newcommand{\intervEnt}[2]{\text{$\llbracket #1,#2\rrbracket$}}

\newcommand{\somme}[4]{\text{$\overset{#2}{\underset{#4=#1}{\sum}} #3$}}

\newcommand{\sommeInd}[2]{\text{${\underset{#1}{\sum}} #2$}}

\newcommand{\prodend}[2]{\text{${\underset{#1}{\prod}} #2$}}

\newcommand{\integrale}[4]{\text{$\int_{#1}^{#2} #3 d#4$}}

\newcommand{\integraleMes}[3]{\text{$\int_{#1} #2 d#3$}}



\newcommand{\norme}[1]{\text{$\left\Vert #1\right\Vert$}}







\newcommand{\abso}[1]{\text{$\left\vert #1\right\vert$}}




\newcommand{\prive}[2]{\text{$#1 \setminus #2$}}

\newcommand{\reciproque}[1]{\text{${#1}^{-1}$}}

\newcommand{\intInd}[3]{\text{$\int_{#1} #2 d#3$}}

\newcommand{\restreint}[2]{\text{$#1_{\left \vert #2 \right.}$}}

\newtheorem{definition}{Definition}[section]

\newtheorem{theorem}[definition]{Theorem}

\newtheorem{lemme}[definition]{Lemma}

\newtheorem{coro}[definition]{Corollary}

\newtheorem{proposition}[definition]{Proposition}


\newtheorem{exemple}[definition]{Example}

\newtheorem{remark}[definition]{Remark}


\newcounter{numberSection}[section]

\newcounter{numberSubSection}[subsection]

\addtocounter{numberSection}{1}
\addtocounter{numberSubSection}{1}

\begin{document}
\title{Keplerian shear with Rajchman property}
\author{Arthur Boos \and Benoit Saussol}
\address{Aix-Marseille Université, Institut de Mathématiques de Marseille, CNRS, Luminy case 907, 13288 Marseille}

\begin{abstract}
The Keplerian shear was introduced within the context of measure preserving dynamical systems by Damien Thomine \cite{DaTho}, as a version of mixing for non ergodic systems. In this study we provide a characterization of the Keplerian shear using Rajchman measure, for some flows on tori bundles. 
Our work applies to dynamical systems with singularities or with non-absolutely continuous measures. We relate the speed of decay of conditional correlations with the Rajchman order of the measures.
Some of these results are extended to the case of compact Lie group
bundles.
\end{abstract}
\date{\today}
\maketitle

\tableofcontents

\section{Introduction}

In a dynamical system, the mixing property reveals that trajectories are intermingled and asymptotically distributed somewhat homogeneously. A paradigmatic example is provided by Arnold's cat map on the 2-torus
$\mathbb{T}^2$, endowed with the Lebesgue measure,
\[
T=\left(\begin{matrix}
2 & 1\\
1 & 1
\end{matrix}\right).\]

From a probabilistic perspective, mixing signifies the asymptotic independence of events; long-term evolution forgets the initial conditions. This property, along with its quantitative counterparts (e.g., decay of correlations), forms the foundation for various probabilistic limit theorems within the context of deterministic dynamics, such as the Central Limit Theorem and Borel-Cantelli lemmas (see \cite{Chern}).

However, there are many systems in which certain quantities are conserved during evolution. This occurs in integrable systems of Hamiltonian dynamics, particularly in some geodesic flows and rational billiards (see \cite{DaTho} and references therein). This phenomenon can also manifest in biological systems, where the preservation of a character results in different evolutionary paths. The presence of these invariants prevents the system from being ergodic and, consequently, from exhibiting mixing. The transvection
\[T := \left(\begin{matrix}
    1 & 0\\
    1 & 1
\end{matrix}\right),\]
acting on the torus $\mathbb{T}^2$,
is a paradigmatic example.
This map acts on the second coordinate as a rotation on the circle. The evolution in each ergodic component (each individual system) is notably simple and predictable. Nevertheless, Kesten demonstrated that, with some randomness introduced into the rotation angle, trajectories distribute quite homogeneously in the long term. Specifically, the discrepancies, suitably normalized, converge to a Cauchy distribution \cite{kesten} (see also \cite{DolgoFay} for multidimensional generalization).

In celestial mechanics, planetary rings (the motion of each dust particle) may be modeled by the flow
\[\begin{array}{lll}
g_t : &[a,b]\times \mathbb{T} &\to  [a,b]\times \mathbb{T}\\
&(r,\theta)&\mapsto \left(r,\theta+tr^{-\frac{3}{2}}\right).\end{array}\]
While the distance to the center is preserved, and each trajectory is essentially a rotation, the fact that angular velocities vary allows for the aggregation of materials, potentially explaining the formation of larger bodies.
Recently Damien Thomine \cite{DaTho} introduced the notion of Keplerian shear, formalizing the fact that in non ergodic systems, trajectories may distribute homogeneously and independently of their past, provided we ignore the invariants.

The aim of our work is to pursue the study of non-ergodic dynamical systems which have the property of Keplerian shear.

The probabilistic dynamical system $(X,T,\mu)$ has Keplerian shear if for all $f\in L^2(\mu)$ we have the weak convergence
\begin{equation}\label{1}
 f\circ T^n \rightharpoonup E_\mu(f|\mathcal{I}),
 \end{equation}
$\mathcal{I}$ being the $\sigma$-algebra of invariant by the transformation $T$.
The lack of ergodicity is indeed adding randomness to the dynamics (the choice of the ergodic component), and even if the dynamics on the fiber is not mixing the system can globally appear mixing conditionally to the fibers. 

We now present the organization of the article.
In the section 2, we define Keplerian shear, relate it to the mixing property and the {Rajchman} property of a measure, the main notion in this article. We also make a description of the  dynamical systems that we consider in this work, which includes locally action-angle dynamics.

In the section 3, we show the main result in the discrete case and we investigate the speed of shearing, using 
 anisotropic {Sobolev} spaces.

Section 4 addresses  in the continuous case the same questions of section 3.

Section 5 provides applications of Keplerian shear to dynamical Borel-Cantelli lemmas and Diophantine approximation.

To finish, section 6 generalizes the work to the case where the phase space is a  connected-compact Hausdorff {Lie} group.
\\

\noindent 
{\it Acknowledgements.}
We would like to thank the anonymous referee, as well as Sébastien Gouezel and Thierry De La Rue for their time and valuable inputs to our 
manuscript. Their constructive suggestions and valuable comments are critical to improving the overall quality of the manuscript.

\section{Settings}
\subsection{Mixing property and Keplerian shear}

The mixing property is well known and is satisfied in many situations. However it makes sense only for ergodic systems. From this point of view, Keplerian shear may be a right compromise.

We will note in the article $(\Omega,\mathcal{T},\mu,(g_t)_{t\in\mathbb{R}})$ (resp.  $(\Omega,\mathcal{T},\mu,T)$) a continuous dynamical (resp. discrete) system.
A mixing system is an asymptotically independent system in the following sense:

\begin{definition}[Mixing system]
We say that $(\Omega,\mathcal{T},\mu,(g_t)_{t\in\mathbb{R}})$ (resp. $(\Omega,\mathcal{T},\mu,T)$) is mixing if for all $A,B\in\mathcal{T},$

\begin{equation}\mu(A\cap g_t^{-1}(B))-\mu(A)\mu(B)\xrightarrow[t\to+\infty]{}0.\end{equation}

\begin{equation}\left(\text{resp. }  \mu(A\cap \left(T^n\right)^{-1}(B))-\mu(A)\mu(B)\xrightarrow[n\to+\infty]{}0.\right)\end{equation}

\end{definition}

We recall that mixing systems are ergodic. Consequently the following systems are not mixing, but we will show that they exhibit Keplerian shear.

\begin{exemple}[Non ergodic systems]
We endow these systems with the Lebesgue measure on $\mathbb{T}^2$.

\begin{enumerate}
\item $\begin{array}{lclc}T : &\mathbb{T}^2&\to&\mathbb{T}^2\\
&(x,y)&\mapsto &(x,y+x)\end{array}$
\item $\begin{array}{lclc}g_t : &\mathbb{T}^2&\to&\mathbb{T}^2\\
&(x,y)&\mapsto &\left(x,y+t\cos\left(2\pi\left(x-\frac{1}{2}\right)\right)\right).\end{array}$
\end{enumerate}
\end{exemple}

\begin{definition}[Invariant $\sigma-$algebra]
The invariant $\sigma-$algebra $\mathcal{I}$ is the $\sigma-$algebra of measurable sets invariant by continuous flow (resp. discrete):

For $(\Omega,\mathcal{T},\mu,(g_t)_{t\in\mathbb{R}})$ (resp. $(\Omega,\mathcal{T},\mu,T)$) a continuous dynamical system  (resp.discrete)

$\mathcal{I} := \left\{A\in\mathcal{T} : \forall t\in \mathbb{R},\mu(A\Delta g_t^{-1}(A))=0\right\} (\text{resp. } \left\{A\in\mathcal{T} :\mu(A\Delta T^{-1}(A))=0\right\})$. 

\end{definition}

The Keplerian shear is a notion of asymptotic independence conditionally to the $T-$invariant algebra.

\begin{definition}[Keplerian shear]

The dynamical system $(\Omega,\mathcal{T},\mu,(g_t)_{t\in\mathbb{R}})$ exhibits Keplerian shear if for all $f\in \mathbb{L}^2_\mu(\Omega)$:

\begin{equation}\label{eq:kepl_shear_def}f\circ g_t\underset{t\to+\infty}{\rightharpoonup}\mathbb{E}_\mu(f\vert\mathcal{I})\, \left(\text{resp.} f\circ T^n \underset{n\to+\infty}{\rightharpoonup}\mathbb{E}_\mu(f\vert\mathcal{I})\right).\end{equation}
\end{definition}


\begin{definition}[Conditional correlation]

We define the conditional correlation for $f_1,f_2\in \mathbb{L}^2_\mu(\Omega)$ by
\begin{equation}Cov_t(f_1,f_2\vert\mathcal{I}):=\mathbb{E}_\mu(\overline{f_1}\cdot (f_2\circ g_t)\vert\mathcal{I})-\overline{\mathbb{E}_\mu(f_1\vert \mathcal{I})} \cdot\mathbb{E}_\mu(f_2\vert \mathcal{I}) ;\end{equation}

\[\left(\text{resp. }Cov_n(f_1,f_2\vert\mathcal{I}):=\mathbb{E}_\mu(\overline{f_1}\cdot (f_2\circ T^n)\vert\mathcal{I})-\overline{\mathbb{E}_\mu(f_1\vert \mathcal{I})} \cdot\mathbb{E}_\mu(f_2\vert \mathcal{I})\right).\]

\end{definition}

\begin{proposition}[\cite{DaTho}]
The Keplerian shear property is equivalent to the convergence to $0$ of the expectation of the conditional correlation, in other words, for all $f_1,f_2\in \mathbb{L}^2_\mu(\Omega)$,
\begin{equation}
\mathbb{E}_\mu\left(Cov_t(f_1,f_2\vert \mathcal{I})\right)\xrightarrow[t\to+\infty]{}0 ;\left(\text{resp. } \mathbb{E}_\mu\left(Cov_n(f_1,f_2\vert \mathcal{I})\right)\xrightarrow[n\to+\infty]{}0 \right).
\label{correlation}
\end{equation}
\end{proposition}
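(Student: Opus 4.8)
The plan is to compute the scalar $\mathbb{E}_\mu(Cov_t(f_1,f_2\vert\mathcal{I}))$ in closed form and recognise it as precisely the pairing whose decay is, by definition, the weak convergence in the Keplerian shear property. Throughout I write $\langle u,v\rangle := \mathbb{E}_\mu(\overline{u}\cdot v)$ for the (sesquilinear) inner product on $\mathbb{L}^2_\mu(\Omega)$, and I use that $\mu$ is invariant, so that $f_2\circ g_t\in\mathbb{L}^2_\mu(\Omega)$ and all the conditional expectations below are well defined.

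First I would treat the two terms of $Cov_t$ separately. Applying the tower property $\mathbb{E}_\mu(\mathbb{E}_\mu(\cdot\vert\mathcal{I}))=\mathbb{E}_\mu(\cdot)$ to the first term gives $\mathbb{E}_\mu\big(\mathbb{E}_\mu(\overline{f_1}\cdot(f_2\circ g_t)\vert\mathcal{I})\big)=\langle f_1,f_2\circ g_t\rangle$. For the second term, both $\mathbb{E}_\mu(f_1\vert\mathcal{I})$ and $\mathbb{E}_\mu(f_2\vert\mathcal{I})$ are $\mathcal{I}$-measurable, and $\mathbb{E}_\mu(\cdot\vert\mathcal{I})$ is a self-adjoint projection of $\mathbb{L}^2_\mu(\Omega)$; pulling the $\mathcal{I}$-measurable factor $\mathbb{E}_\mu(f_2\vert\mathcal{I})$ out of a conditional expectation yields $\mathbb{E}_\mu\big(\overline{\mathbb{E}_\mu(f_1\vert\mathcal{I})}\cdot\mathbb{E}_\mu(f_2\vert\mathcal{I})\big)=\langle f_1,\mathbb{E}_\mu(f_2\vert\mathcal{I})\rangle$. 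Subtracting, I obtain the identity
\[
\mathbb{E}_\mu\big(Cov_t(f_1,f_2\vert\mathcal{I})\big)=\big\langle f_1,\;f_2\circ g_t-\mathbb{E}_\mu(f_2\vert\mathcal{I})\big\rangle .
\]

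With this identity the equivalence is immediate and requires no estimates. For the direct implication, if the system has Keplerian shear then $f_2\circ g_t\rightharpoonup\mathbb{E}_\mu(f_2\vert\mathcal{I})$, so by definition of weak convergence the right-hand side tends to $0$ for every $f_1$, i.e. \eqref{correlation} holds. Conversely, if \eqref{correlation} holds then for each fixed $f_2$ the right-hand side tends to $0$ as $f_1$ ranges over all of $\mathbb{L}^2_\mu(\Omega)$, which is exactly the statement $f_2\circ g_t\rightharpoonup\mathbb{E}_\mu(f_2\vert\mathcal{I})$, hence Keplerian shear. I do not expect a genuine obstacle here: the only points needing care are the conjugation conventions of the complex inner product and the justification that one conditional expectation may be transferred across the pairing, which is the self-adjointness of the projection $\mathbb{E}_\mu(\cdot\vert\mathcal{I})$ together with the pull-out property for $\mathcal{I}$-measurable factors. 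The discrete case is verbatim the same with $g_t$ replaced by $T^n$ and $t\to+\infty$ by $n\to+\infty$.
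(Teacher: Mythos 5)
Your proof is correct. Note that the paper gives no proof of this proposition at all --- it is quoted from \cite{DaTho} --- so there is nothing internal to compare against; your argument (the identity $\mathbb{E}_\mu\left(Cov_t(f_1,f_2\vert\mathcal{I})\right)=\left\langle f_1,\,f_2\circ g_t-\mathbb{E}_\mu(f_2\vert\mathcal{I})\right\rangle$ via the tower property and the self-adjointness of the conditional-expectation projection, followed by the observation that decay of this pairing for all $f_1,f_2$ is precisely weak convergence) is the standard one and is essentially the proof found in the cited reference.
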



In ergodic systems, Keplerian shear is equivalent to  mixing.
The mixing property is thus equivalent to the ergodicity and Keplerian shear.

\subsection{Rajchman measure}

The Fourier theory will be instrumental in our study of correlation decay. 
In particular, measures with the Riemann-Lebesgue property will play an important role in this work.

\begin{definition}[Rajchman measure]
In the continuous case, a measure $\nu$ on  $(\mathbb{R},\mathcal{B}(\mathbb{R}))$ is Rajchman if
\begin{equation}\label{eq:rajchman}\widehat{\nu}(t)\xrightarrow[t\to\pm\infty]{}0\,;\end{equation}

In the discrete case, a measure $\nu$ on $(\mathbb{T},\mathcal{B}(\mathbb{T}))$
is {Rajchman} if
\begin{equation}\label{eq:rajchman}\widehat{\nu}(n)\xrightarrow[n\to\pm\infty]{}0,\end{equation}
where
$\hat{\nu}(t)= \intInd{\mathbb{R}}{e^{2i\pi tx}}{\nu(x)}\,; \left(resp.  \hat{\nu}(n)= \intInd{\mathbb{T}}{e^{2i\pi nx}}{\nu(x)}\right)$.
\end{definition}

\subsubsection{Functional equivalences}

We recall an equivalent interpretation of the Rajchman property in terms of convergence in distribution or equidistribution.


\begin{proposition}
\label{equitore}
In the continuous case (resp.discrete), the measure $\nu$ is Rajchman if and only if for $x$ distibuted by $\nu$ and for all $a>0$,

\begin{equation} \frac{nx}{a} \mod 1 \xrightarrow[n\to+\infty]{\mathscr{L}}\mathcal{U}\left(\mathbb{T}\right);\end{equation}

\begin{equation}\left( resp.\,nx \mod 1\xrightarrow[n\to+\infty]{\mathscr{L}} \mathcal{U}\left(\mathbb{T}\right)\right).\end{equation}

\end{proposition}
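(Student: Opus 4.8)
The plan is to translate weak convergence on the compact group $\mathbb{T}$ into a statement about Fourier coefficients, and then to recognize the latter as exactly the Rajchman decay of $\widehat{\nu}$. The guiding tool is the character criterion for weak convergence: a sequence of probability measures $\mu_n$ on $\mathbb{T}$ converges weakly to a probability measure $\mu$ if and only if $\widehat{\mu_n}(k)\to\widehat{\mu}(k)$ for every $k\in\mathbb{Z}$. One direction is immediate since each character $x\mapsto e^{2i\pi kx}$ is continuous; the converse follows from the density of trigonometric polynomials in $C(\mathbb{T})$ (Stone--Weierstrass) together with the uniform mass bound $\mu_n(\mathbb{T})=1$ coming from the fact that $\nu$ is a probability measure.

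First I would treat the discrete case. For $x$ distributed by $\nu$, let $\mu_n$ denote the law of $nx$ reduced modulo $1$. Since $e^{2i\pi k(nx\,[1])}=e^{2i\pi knx}$, the $k$-th Fourier coefficient is
\[
\widehat{\mu_n}(k)=\int_{\mathbb{T}} e^{2i\pi knx}\,d\nu(x)=\widehat{\nu}(kn).
\]
The uniform measure $\mathcal{U}(\mathbb{T})$ has Fourier coefficients $\mathbf{1}_{\{k=0\}}$, and $\widehat{\mu_n}(0)=1$ automatically. Hence $nx\,[1]\xrightarrow{\mathscr{L}}\mathcal{U}(\mathbb{T})$ is equivalent to $\widehat{\nu}(kn)\to 0$ as $n\to+\infty$ for every $k\neq 0$. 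If $\nu$ is Rajchman then $kn\to\pm\infty$ forces $\widehat{\nu}(kn)\to 0$, giving equidistribution; conversely the cases $k=1$ and $k=-1$ recover precisely $\widehat{\nu}(m)\to 0$ as $m\to+\infty$ and as $m\to-\infty$, which is the Rajchman property.

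In the continuous case the same computation gives $\widehat{\mu_n}(k)=\widehat{\nu}(kn/a)$ for the law $\mu_n$ of $(nx/a)\,[1]$, so equidistribution for a given $a$ is equivalent to $\widehat{\nu}(kn/a)\to 0$ for all $k\neq 0$. The implication from the Rajchman property is again immediate, since $kn/a\to\pm\infty$. The delicate point, which I expect to be the main obstacle, is the converse: the hypotheses only yield $\widehat{\nu}(nb)\to 0$ as $n\to+\infty$ for every $b>0$ (take $k=1$ and $b=1/a$), i.e. decay of the continuous function $\widehat{\nu}$ along every arithmetic progression, whereas the Rajchman property demands decay along the whole real line. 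To bridge this gap I would run a Baire category argument. Since $\nu$ is finite, $\widehat{\nu}$ is continuous, so for fixed $\varepsilon>0$ the sets $F_N=\{b>0: |\widehat{\nu}(nb)|\le\varepsilon \text{ for all } n\ge N\}$ are closed and, by the pointwise decay, cover $(0,+\infty)$. Baire's theorem produces an $N$ and an interval $(c,d)\subset F_N$; the dilates $\bigcup_{n\ge N}(nc,nd)$ overlap once $n(d-c)>c$ and hence contain a half-line $(T,+\infty)$ on which $|\widehat{\nu}|\le\varepsilon$. As $\varepsilon$ is arbitrary this gives $\widehat{\nu}(t)\to 0$ as $t\to+\infty$, and the limit $t\to-\infty$ follows symmetrically (via $k=-1$, or from $\widehat{\nu}(-t)=\overline{\widehat{\nu}(t)}$ for the probability measure $\nu$), completing the Rajchman conclusion.
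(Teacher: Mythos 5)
Your proof is correct, and the discrete case together with both direct implications coincides with the paper's argument (a Weyl-type character criterion: compute $\widehat{\mu_n}(k)=\widehat{\nu}(kn)$, resp.\ $\widehat{\nu}(kn/a)$, and observe that Rajchman decay kills every nonzero frequency). The genuine divergence is in the converse of the continuous case, which you correctly identify as the delicate step. The paper bridges the gap from ``decay along progressions'' to ``decay along the whole line'' using the \emph{uniform} continuity of $\widehat{\nu}$ on $\mathbb{R}$ (a property of characteristic functions of probability measures, coming from tightness): given $\varepsilon>0$, pick $\delta$ from uniform continuity, apply the hypothesis to the single progression $\frac{\delta}{2}\mathbb{N}$ to make $\bigl|\widehat{\nu}(n\tfrac{\delta}{2})\bigr|<\tfrac{\varepsilon}{2}$ for large $n$, and note that every large $t$ lies within $\delta$ of such a point. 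You instead run a Baire category argument: the sets $F_N$ are closed by plain continuity of $\widehat{\nu}$, they cover $(0,\infty)$ by the pointwise decay, one of them contains an interval $(c,d)$, and the dilates $\bigcup_{n\ge N}(nc,nd)$ chain up into a half-line once $n(d-c)>c$. Both bridges are sound. The paper's route is more elementary (no Baire theorem) but leans on the specific structural fact that $\widehat{\nu}$ is uniformly continuous; your route uses heavier topological machinery but needs only continuity, so it proves the more general statement that any continuous function tending to $0$ along all dilated integer sequences $\{nb\}_{n}$ tends to $0$ at infinity --- a classical lemma in the theory of Rajchman measures. Your explicit treatment of $t\to-\infty$ via $\widehat{\nu}(-t)=\overline{\widehat{\nu}(t)}$ (or $k=-1$) is also slightly more careful than the paper's, which only addresses $+\infty$.
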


\begin{proof}
We begin with the discrete case.
Let $\varphi\in\mathcal{C}^2(\mathbb{T})$. Its Fourier series decomposition gives for all $x\in \mathbb{T}$ and all $n\in\mathbb{Z}$,
\begin{equation}\label{eq:serie_fourier}\varphi(nx)=\sommeInd{k\in \mathbb{Z}}{\widehat{\varphi}(k)e^{2i\pi kn x}}.\end{equation}

We recall that \begin{equation}\label{eq:zero_fourier}
    \widehat{\varphi}(0)=\intInd{\mathbb{T}}{\varphi(x)}{\lambda(x)}
\end{equation}
and for $m\in\mathbb{Z},$ \begin{equation}\label{eq:carac}\widehat{\nu}(m)=\int_{\mathbb{T}}e^{2i\pi m x}d\nu(x).\end{equation}

By Lebesgue convergence theorem, \eqref{eq:serie_fourier},\eqref{eq:zero_fourier} and \eqref{eq:carac}, we get
\begin{equation*}\int_{\mathbb{T}}\varphi(nx)d\nu(x)=\intInd{\mathbb{T}}{\varphi(x)}{\lambda(x)}+\sommeInd{k\in \mathbb{Z}^*}{\widehat{\varphi}(k)}\widehat{\nu}(kn).\end{equation*}

Again, by Lebesgue convergence theorem,
\begin{equation*}
    \sommeInd{k\in \mathbb{Z}^*}{\widehat{\varphi}(k)}\widehat{\nu}(kn)\xrightarrow[n\to+\infty]{}0.
\end{equation*}
By density this convergence extends to $\varphi\in\mathcal{C}^0(\mathbb{T})$ and concludes the proof.

For the reciprocal implication, we take $\varphi : x\mapsto e^{2i\pi x}$, by \eqref{eq:serie_fourier} we get \begin{equation}\label{eq:carac_int}\widehat{\nu}(n)=\intInd{\mathbb{T}}{\varphi(nx)}{\nu(x)}\end{equation} and \begin{equation}\label{eq:vanish_zero}
    \widehat{\varphi}(0)=0.
\end{equation}
By hypothesis, \eqref{eq:zero_fourier}, \eqref{eq:carac_int} and \eqref{eq:vanish_zero},
\begin{equation}
    \widehat{\nu}(n)\xrightarrow[n\to+\infty]{}0.
\end{equation}

Now, we treat the continuous case.
Let's start with the direct implication.

Let $a>0$ and let $\mu$ be the push forward of $\nu$ by $x\mapsto\frac{x}{a}\mod 1$.
We have for any $n\in\mathbb{Z}$
\begin{equation*}\widehat{\mu}(n)=\intInd{\mathbb{T}}{e^{2i\pi nx}}{\mu(x)}=\intInd{\mathbb{R}}{e^{2i\pi\frac{n}{a}x}}{\nu(x)}=\widehat{\nu}\left(\frac{n}{a}\right)\xrightarrow[n\to+\infty]{}0.\end{equation*}
Therefore, $\mu$ is Rajchman on $\mathbb{T}$ and we apply the result obtained in the discrete case.

Now, we prove the reciprocal implication.
Let $\varepsilon>0$.
The uniform continuity of the characteristic function gives that there exists $\delta>0$ such that for all $x,y\in\mathbb{R}$,
\begin{equation}\label{eq:uniform_continuity}\vert x-y\vert<\delta\Longrightarrow \vert \widehat{\nu}(x)-\widehat{\nu}(y)\vert<\frac{\varepsilon}{2} .\end{equation}
In addition, by the Rajchman hypothesis, there exists $n_0\in\mathbb{N}$ such that for all $n\in \mathbb{N}$,  \begin{equation}\label{eq:range_control}n\geq n_0\Longrightarrow \widehat{\nu}\left(n\frac{\delta}{2}\right)<\frac{\varepsilon}{2}.\end{equation}

Let $t\geq n_0\frac{\delta}{2}$.
Since $\frac{\delta}{2}\mathbb{N}\cap [t,t+\delta[\ne\emptyset$, there exists $ n\in \mathbb{N},\vert n\frac{\delta}{2}-t\vert<\delta$. Thus, by \eqref{eq:uniform_continuity} and \eqref{eq:range_control},
 \begin{equation*}\vert\widehat{\nu}(t)\vert\leq \left\vert \widehat{\nu}(t)-\widehat{\nu}\left(n\frac{\delta}{2}\right)\right\vert +\left\vert\widehat{\nu}\left(n\frac{\delta}{2}\right)\right\vert<\varepsilon.\end{equation*}

\end{proof}


\begin{lemme}[Weak$-*$ convergence and {Rajchman} property]
\label{convfaible}

Let $\mu$ be a Borel-probability measure on $\mathbb{R}$.

The measure $\mu$ is {Rajchman} if and only if \begin{equation}\label{eq:weak_topo_*}\left(x\mapsto e^{2i\pi tx}\right)\xrightarrow[t\to+\infty]{}0\end{equation} in the weak-$*$ topology on $\left(\mathbb{L}^1_\mu(\mathbb{R})\right)^* = \mathbb{L}^\infty_\mu(\mathbb{R})$.

\end{lemme}

\begin{proof}
The reciprocal implication is obtained immediately applying the duality property \eqref{eq:weak_topo_*} with $(x\in\mathbb{R}\mapsto 1)\in \mathbb{L}^1_\mu(\mathbb{R})$.

Now, we prove the direct implication.
Let $\varphi\in \mathcal{S}(\mathbb{R})$.
Writing $\varphi$ with its inverse {Fourier} transform, we get for all $x\in\mathbb{R}$,
\begin{equation}\label{eq:reverse_fourier}
    \varphi(x)=\intInd{\mathbb{R}}{e^{2i\pi xs}\widehat{\varphi}(s)}{\lambda(s)}.
\end{equation}
By Fubini theorem and \eqref{eq:reverse_fourier}, we get
\begin{equation*}
    \intInd{\mathbb{R}}{e^{2i\pi tx}\varphi(x)}{\mu(x)}=\intInd{\mathbb{R}}{\widehat{\varphi}(s)\widehat{\mu}(t+s)}{\lambda(s)}.
\end{equation*}
Since $\mu$ is Rajchman, by Lebesgue convergence theorem
\begin{align*}
\intInd{\mathbb{R}}{\widehat{\varphi}(s)\widehat{\mu}(t+s)}{\lambda(s)}\xrightarrow[t\to+\infty]{} 0.
\end{align*}
\end{proof}

\begin{lemme}[Weak$-*$ convergence and {Rajchman} property in discrete case]
\label{convfaible_discrete}

Let $\mu$ be a Borel-probability measure on $\mathbb{T}$.

The measure $\mu$ is {Rajchman} if and only if \begin{equation}\label{eq:weak_topo_*_discrete}\left(x\mapsto e^{2i\pi nx}\right)\xrightarrow[n\to+\infty]{}0\end{equation} in the weak-* topology on $\mathbb{L}^2_\mu(\mathbb{T})$.
\end{lemme}

\begin{proof}
The reciprocal implication is obtained immediately applying the duality property \eqref{eq:weak_topo_*_discrete} with $(x\in\mathbb{R}\mapsto 1)\in \mathbb{L}^2_\mu(\mathbb{T})$.

Now, we prove the direct implication.
Let $\varphi\in \mathcal{C}^\infty(\mathbb{T})$.
Expanding $\varphi$ in Fourier serie we get for all $x\in\mathbb{T}$,
\begin{equation}\label{eq:reverse_fourier_discrete}
    \varphi(x)=\sum_{k\in\mathbb{Z}}\widehat{\varphi}(k)e^{2i\pi k x}.
\end{equation}
By Fubini theorem and \eqref{eq:reverse_fourier}, we get
\begin{equation*}
    \intInd{\mathbb{T}}{e^{2i\pi nx}\varphi(x)}{\mu(x)}=\sum_{k\in\mathbb{Z}}\widehat{\varphi}(k)\widehat{\mu}(k+n)\xrightarrow[n\to+\infty]{} 0
\end{equation*}
by Lebesgue convergence theorem, using the fact that $\mu$ is Rajchman.
The conclusion follows by density.
\end{proof}
\subsubsection{Fourier-{Rajchman} decay}

The notion of Rajchman measure is only qualitative. To estimate the speed of convergence to $0$ of the expectation of conditional correlations~\eqref{correlation},  a quantitative version of the Fourier decay will be needed.


\begin{definition}[Fourier-{Rajchman} decay]

A Rajchman speed of a Rajchman measure on $\mathbb{T}\,(resp.\, \mathbb{R})$ is a value $r\geq 0$ such that there exists $C>0$ satisfying for all $n\in\mathbb{Z}$ (resp. $t\in\mathbb{R}$),

\begin{equation}\label{eq:fourier_decay_discrete}\abso{\widehat{\nu}(n)}\leq \frac{C}{\abso{n}^r};\end{equation}

\begin{equation}\label{eq:fourier_decay_continuous} \text{resp. }\abso{\widehat{\nu}(t)}\leq \frac{C}{\abso{t}^r}.\end{equation}

In other words, $\widehat{\nu}(n)=O_\infty(\abso{n}^{-r})$ (resp. $\widehat{\nu}(t)=O_\infty(\abso{t}^{-r})$).
\end{definition}

\begin{definition}[{Rajchman} order]

The Rajchman order of a Rajchman measure is the supremum of its Rajchman speeds.
We denote it $r(\mu)$.

\end{definition}
\begin{remark}
The Rajchman order is linked to the Fourier dimension $dimF(\mu)$ defined in \cite{BoSol} by the relation $dimF(\mu)=2r(\mu)$.
Additionnaly,  as mentioned there, when the {Rajchman} order $r$ verifies $r>\frac{1}{2}$, $\mu\ll\lambda$. 
Consequently a Rajchman measure can be singular continuous only if its Rajchman order $r$ satisfies $r\leq \frac{1}{2}$.
\end{remark}

\begin{definition}[Diophantine exponent]
We define the Diophantine exponent of $x\in \mathbb{R}$ by $Dio(x)=\inf A = \sup B$, where\footnote{With the convention $\inf\emptyset=+\infty$.}
\begin{equation}A=\left\{s>0 : \exists C>0,\forall (p,q)\in \mathbb{Z}\times \mathbb{N}^*,\abso{qx-p}\geq \frac{C}{q^s}\right\}\end{equation}
and \begin{equation}B=\left\{t>0 : \exists \, \text{infinitely many} \, (p,q)\in \mathbb{Z}\times \mathbb{N}^*,\abso{qx-p}< \frac{1}{q^t}\right\}.\end{equation}

\end{definition}

\begin{definition}[$s-$Diophantine number]\label{def:s_diophante}
Let $s>0.$

A number $x\in\mathbb{R}$ is called $s$-Diophantine if there exists $C>0$ such that for all $p\in\mathbb{Z}$, for all $q\in \mathbb{N}^*,$
\begin{equation}
\abso{qx-p}\geq \frac{C}{q^s} \label{diophante}.
\end{equation}
Let $\mathcal{D}io(s)$ be the set of $s$-Diophantine numbers.
\end{definition}

\begin{definition}[Liouville Number]
A number $x\in \mathbb{R}$ is {Liouville} if and only if for all $s>0$, $x$ is not $s$-Diophantine, that is, $Dio(x)=+\infty$.
In other words \[x\in \mathbb{L} := \mathbb{R}\setminus \left(\union{s>0}{\mathcal{D}io(s)}\right).\] 
\end{definition}

The Rajchman order of a measure controls the almost sure Diophantine exponent (See Proposition~\ref{kurzman})

\begin{proposition}
\label{diophraj}

If $r(\mu)\leq \frac{1}{2}$, then  $\mu-a.s\, \alpha \in [0,1[, Dio(\alpha)\leq \frac{1}{r(\mu)}-1$. In the absolutely continuous case, $Dio(\alpha)=1$ a.s.

\end{proposition}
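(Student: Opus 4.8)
The plan is to bound the Diophantine exponent from above through a convergence Borel--Cantelli argument driven by the Fourier--Rajchman decay. Fix a Rajchman speed $r<r(\mu)$, so that $|\widehat\mu(n)|\le C\lVert n\rVert^{-r}$ for $\lVert n\rVert\ge n_0$, and fix $t>\tfrac1r-1$. Writing $\lVert y\rVert$ for the distance from $y$ to $\mathbb{Z}$, set
\[
A_q:=\{\alpha\in[0,1): \lVert q\alpha\rVert<q^{-t}\},\qquad E_t:=\limsup_{q\to\infty}A_q .
\]
Since $\alpha\in A_q$ exactly when $|q\alpha-p|<q^{-t}$ for some $p\in\mathbb{Z}$, one has $\alpha\in E_t$ iff $t\in B(\alpha)$. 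The set $B(\alpha)$ is downward closed (if $|q\alpha-p|<q^{-t}$ for infinitely many $(p,q)$ and $t'<t$, the same holds with $t'$ because $q\ge1$), so $Dio(\alpha)=\sup B(\alpha)$, and it suffices to prove $\mu(E_t)=0$ for every rational $t>\tfrac1r-1$: intersecting over such $t$ gives $Dio(\alpha)\le\tfrac1r-1$ $\mu$-a.e., and letting $r\nearrow r(\mu)$ along a sequence then yields $Dio(\alpha)\le\tfrac1{r(\mu)}-1$ for $\mu$-a.e.\ $\alpha$.

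By the Borel--Cantelli lemma it is enough to show $\sum_q\mu(A_q)<\infty$, which reduces everything to estimating $\mu(A_q)$. Here I would majorize the indicator: with $\delta:=q^{-t}$, choose a fixed smooth bump $\varphi\ge\mathbf 1_{[-1,1]}$ supported in $[-2,2]$ and set $f_\delta(x):=\varphi(x/\delta)$, periodized, so that $\mathbf 1_{A_q}(\alpha)\le f_\delta(q\alpha)$. Expanding in Fourier series and integrating term by term (legitimate since $\widehat{f_\delta}$ is rapidly decreasing and $|\widehat\mu|\le1$) gives
\[
\mu(A_q)\le\int_{\mathbb{T}}f_\delta(q\alpha)\,d\mu(\alpha)=\sum_{k\in\mathbb{Z}}\widehat{f_\delta}(k)\,\widehat\mu(kq)=\widehat{f_\delta}(0)+\sum_{k\neq0}\widehat{f_\delta}(k)\,\widehat\mu(kq).
\]
The main term is $\widehat{f_\delta}(0)=\int f_\delta=O(\delta)=O(q^{-t})$. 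For the remainder I would use the scaling $\widehat{f_\delta}(k)=\delta\,\widehat\varphi(\delta k)$ together with the Schwartz bound $|\widehat\varphi(\xi)|\le C_m(1+|\xi|)^{-m}$ and the decay $|\widehat\mu(kq)|\le C(kq)^{-r}$ (valid for all $k\ge1$ once $q\ge n_0$). Splitting the sum at $k\sim1/\delta$ gives $\sum_{k\ge1}|\widehat{f_\delta}(k)|\,k^{-r}\lesssim\delta^{r}$, whence the remainder is $O(q^{-r}\delta^{r})=O(q^{-r(1+t)})$. Altogether
\[
\mu(A_q)\lesssim q^{-t}+q^{-r(1+t)}\qquad(q\ge n_0).
\]

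Summing, $\sum_q q^{-t}<\infty$ needs $t>1$ and $\sum_q q^{-r(1+t)}<\infty$ needs $t>\tfrac1r-1$; since $r\le\tfrac12$ forces $\tfrac1r-1\ge1$, the hypothesis $t>\tfrac1r-1$ makes both series converge, so $\mu(E_t)=0$ and the first assertion follows as above. For the absolutely continuous statement I would argue separately, since an $L^1$ density need not have any polynomial Fourier rate. Dirichlet's theorem gives $Dio(\alpha)\ge1$ for every irrational, hence $\mu$-a.e. For the reverse bound take $\mu=\lambda$: then $\widehat\lambda(n)=0$ for $n\neq0$, so the computation above is exact, $\lambda(A_q)=2q^{-t}$, which is summable for $t>1$; thus $Dio\le1$ $\lambda$-a.e., and the exceptional set, being $\lambda$-null, is $\mu$-null whenever $\mu\ll\lambda$. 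Combining the two bounds gives $Dio(\alpha)=1$ for $\mu$-a.e.\ $\alpha$. The main obstacle is the middle estimate: controlling the Fourier tail of the $q$-dependent bump $f_\delta$ uniformly so as to extract the clean exponent $q^{-r(1+t)}$, which is exactly what pins the threshold at $\tfrac1r-1$.
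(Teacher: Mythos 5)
Your proof is correct and takes essentially the same route as the paper's, which establishes Proposition~\ref{diophraj} via Proposition~\ref{kurzman}: a Borel--Cantelli argument on the shrinking-target sets $\{\alpha : \vert q\alpha - p\vert < q^{-t}\}$, whose $\mu$-measure is controlled by a Fourier expansion together with the Rajchman decay $\vert\widehat{\mu}(kq)\vert \lesssim (kq)^{-r}$, yielding summability exactly when $t>\frac{1}{r}-1$. The only difference is technical: the paper expands the sharp indicator of the union of intervals and tames the slowly decaying $1/k$ tail with the interpolation bound $\vert\sin x\vert\le\vert x\vert^{\varepsilon}$, $\varepsilon\in\left]0,r(\mu)\right[$, whereas you majorize by a smooth bump and split the sum at $k\sim 1/\delta$ -- both give the same exponent and the same threshold.
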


\subsubsection{Radon-Nikodym-Lebesgue decomposition}

We discuss the simple relations that can be seen between the Rajchman property and the Radon-Nikodym-Lebesgue decomposition \begin{equation}\label{eq:RNL_decomposition}\nu = \nu_{ac} + \nu_{sc} + \nu_{d},\end{equation}
with $\nu_{ac}$ the absolutely continuous part, $\nu_{sc}$ the singular continuous part and $\nu_d$ the discrete part.
\begin{enumerate}
\item{\underline{Absolutely continuous part}.}
$\nu_{ac}$ is an absolutely continuous measure and theorefore, it is Rajchman. This is garanteed by Riemann-Lebesgue.
When we know the regularity of the density, we can estimate the Rajchman order. The work of Damien Thomine adresses the case of absolutely continuous measures (See Theorem~\ref{thomcis}).
\item{\underline{Singular continuous part}.}
Many singular continuous measures are Rajchman and other are not. The main discussion is about the Rajchman property of $\nu_{sc}$.
We will extend Damien Thomine results for continuous singular measures (See Theorem~\ref{mtcont}).
\item{\underline{Discrete part}.}
Discrete measure are never Rajchman because the non-convergence of complexe exponential to $0$.
\end{enumerate}

Consequently, $\nu$ is Rajchman if and only if $\nu_{sc}$ is {Rajchman} and $\nu_d=0$.


\subsubsection{Example of singular Rajchman measures}
\label{examRajch}

\begin{definition}[{Pisot} number]

A real $\theta$ is a Pisot number if and only if  $\theta >1$ is algebraic and every other roots $\theta_r$ of its minimal polynomial satisfy $\abso{\theta_r}<1$.

\end{definition}

We note that Lebesgue-almost all numbers are not Pisot.
Now, to highlight the Rajchman property for the singular measure, we will present examples of continuous singular measures having this property. 
\bigbreak
\begin{exemple}[Self-similar measure]
Consider for $\theta>2$, the distribution $\mu_\theta$ of $\sum_{n\in\mathbb{N}^*}\pm \theta^{-n}$, where signs are chosen with i.i.d probabilities $\frac{1}{2}$.
Note that $\mu_{\theta}$ has as characteristic function $\widehat{\mu_\theta} : k\in\mathbb{Z}\mapsto \prod_{n\in\mathbb{N}}{\cos\left(2\pi\theta^{-n}k\right)}$.


$\mu_{\theta}$ is Rajchman if and only if $\theta$  is not a Pisot number \cite{Kaha}.

Additionally, for {Lebesgue}-almost all real $\theta>2$, these measures are singular and the  {Rajchman} order $r\left(\mu_\theta\right)>0$.
\end{exemple}

\begin{exemple}[{Rajchman} measure with {Liouville} set as support $(Dio=+\infty)$]
\label{cantraj}

{Christian Bluhm} \cite{CBluhm} has built a Rajchman measure $\mu_{\infty}$ supported on Liouville numbers.

According to the Proposition \ref{diophraj}, its order is $r\left(\mu_{\infty}\right)=0$.
\end{exemple}


\subsection{Tori bundle}~\label{toribundle}

We study a class of dynamical systems introduced in \cite{DaTho}, tori bundles. These systems have local action-angle coordinates.

\begin{definition}[Tori bundle]
Let $(M,\mathcal{A})$ be a $n\in\mathbb{N}^*$  dimensional $\mathcal{C}^1$ Lindelöf\footnote{Any open cover has a countable subcover.} manifold.

Let $d\in \mathbb{N}^*$, $(\Omega,\mu)$ a Borel space and $\pi$ a continuous projection of $\Omega$ to $M$.


$\Omega$ is a $(n,d)$ dimensional tori bundle if
\begin{enumerate}
\item{locally, we have for charts $U$ of $\mathcal{A}$ an homeomorphism : $\psi_U : \pi^{-1}(U)\to U\times \mathbb{T}^d$}
\item{for all $U$ in $\mathcal{A},\pi_1\circ \psi_U=\pi_{\vert U}$, where $\pi_1$ is the projection on the first coordinate.}

\end{enumerate}
\end{definition}

Bundles of the form $M\times \mathbb{T}^d$ constitute already interesting examples.


\begin{definition}[Compatible flow]
Let $(\Omega, \mu, (g_t)_{t\in\mathbb{R}})$ be a measure preserving dynamical system.

The flow $(g_t)_{t\in \mathbb{R}}$ is a compatible flow with $(\Omega,\mu)$ as a tori bundle if for all charts $U\in \mathcal{A}$ there exists $v_U \in \left(\mathbb{R}^d\right)^U$ measurable such that for $t\in\mathbb{R}, \psi_U\circ g_t\circ \psi_U^{-1}(x,y)=(x,y+tv_U(x))$.

We note $g_t^U : \psi_U\circ g_t\circ \psi_U^{-1}$.
\end{definition}

\begin{definition}[Compatibles measure]
A measure $\mu$ is compatible if for all charts $U\in \mathcal{A},\left(\mu_{\vert\pi^{-1}(U)}\right)_{\psi_U}=\left(\mu_{\pi}\right)_{\vert U}\otimes \lambda$.
\end{definition}

\section{Main results in discrete dynamical systems}

In this case, we use the {Rajchman} property with the help of the corresponding {Fourier} series.
We will highlight the necessary path to show the presence of Keplerian shear in this case.
We now state the main result in the discrete case.

\begin{theorem}\label{main:discrete}
The discrete dynamical system $(\Omega, \mu, T)$ with $\Omega$ a tori bundle exhibits Keplerian shear if and only if for all $0\neq\xi\in\mathbb{Z}^d$, and for all charts $U\in\mathcal{A},m^\mathbb{T}_{\xi,U}$ is {Rajchman} where \begin{equation}m^\mathbb{T}_{\xi,U}=\left(\left(\left(\mu_{\pi}\right)_{\vert U}\right)_{\langle\xi\vert v_U(\cdot)\rangle-\lfloor \langle\xi\vert v_U(\cdot)\rangle\rfloor}\right)_{\vert\mathbb{T}\setminus \{0\}}.\end{equation}
\end{theorem}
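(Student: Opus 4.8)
The plan is to read Keplerian shear as, by definition, the weak convergence $f\circ T^n\rightharpoonup \mathbb E_\mu(f\mid\mathcal I)$ of the Koopman operators, i.e.\ $\int_\Omega \overline g\,(f\circ T^n)\,d\mu\to\int_\Omega \overline g\,\mathbb E_\mu(f\mid\mathcal I)\,d\mu$ for all $f,g\in \mathbb L^2_\mu(\Omega)$. Since $f\mapsto f\circ T^n$ are isometries of $\mathbb L^2_\mu(\Omega)$, this family is uniformly bounded; writing $P=\mathbb E_\mu(\cdot\mid\mathcal I)$ for the orthogonal projection onto the $\mathcal I$-measurable functions, a standard $3\varepsilon$-argument then shows it suffices to establish the convergence when $f,g$ range over a total subset of $\mathbb L^2_\mu(\Omega)$. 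Using the Lindelöf property to extract a countable subcover of charts with a subordinate partition of unity, and expanding along the fibre $\mathbb T^d$ in Fourier series, I may take this total set to consist of the elementary functions $f=a(x)e^{2i\pi\langle\xi\mid y\rangle}$ and $g=b(x)e^{2i\pi\langle\eta\mid y\rangle}$ supported in a single chart $U$, with $a,b\in \mathbb L^2((\mu_\pi)_{\vert U})$ and $\xi,\eta\in\mathbb Z^d$.

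First I would carry out the local computation. Writing $T$ in the coordinates $\psi_U$ as $(x,y)\mapsto(x,y+v_U(x))$ gives $f\circ T^n=e^{2i\pi n\langle\xi\mid v_U(x)\rangle}a(x)e^{2i\pi\langle\xi\mid y\rangle}$, so integrating first in $y$ (orthogonality of the characters forces $\xi=\eta$, and the case $\xi=0$ is trivially invariant) leaves
\[
\int_\Omega \overline g\,(f\circ T^n)\,d\mu=\int_U \overline{b(x)}\,a(x)\,e^{2i\pi n\langle\xi\mid v_U(x)\rangle}\,d(\mu_\pi)(x),\qquad \xi=\eta\neq 0_{\mathbb Z^d}.
\]
To identify the limit I would invoke the mean ergodic theorem, which realizes $Pf$ as the $\mathbb L^2$-limit of the Cesàro averages $\frac1N\sum_{n=0}^{N-1}f\circ T^n$; since $\frac1N\sum_{n=0}^{N-1}e^{2i\pi n\langle\xi\mid v_U(x)\rangle}\to\mathbf 1_{\{\langle\xi\mid v_U(x)\rangle\in\mathbb Z\}}$, one gets $Pf=a(x)e^{2i\pi\langle\xi\mid y\rangle}\mathbf 1_{\{\langle\xi\mid v_U\rangle\in\mathbb Z\}}$ and
\[
\int_\Omega \overline{Pg}\,Pf\,d\mu=\int_U\overline{b}\,a\,\mathbf 1_{\{\langle\xi\mid v_U\rangle\in\mathbb Z\}}\,d(\mu_\pi).
\]
Subtracting, the contribution of $\{\langle\xi\mid v_U\rangle\in\mathbb Z\}$ cancels (there $e^{2i\pi n\langle\xi\mid v_U\rangle}=1$), and the quantity to control is $\int_{\{\langle\xi\mid v_U\rangle\notin\mathbb Z\}}\overline b\,a\,e^{2i\pi n\langle\xi\mid v_U(x)\rangle}\,d(\mu_\pi)$, which is exactly the $n$-th Fourier coefficient $\widehat{\rho}(n)$ of the pushforward $\rho$ of the complex measure $\overline b\,a\,(\mu_\pi)_{\vert U}$ under $\Phi_\xi:x\mapsto\langle\xi\mid v_U(x)\rangle-\lfloor\langle\xi\mid v_U(x)\rangle\rfloor$, restricted to $\mathbb T\setminus\{0\}$.

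For the forward implication the key remark is that $\rho$ is absolutely continuous with respect to $m^{\mathbb T}_{\xi,U}$: any Borel set negligible for $m^{\mathbb T}_{\xi,U}$ has $\Phi_\xi$-preimage negligible for $\mu_\pi$, hence $|\rho|$-negligible. Writing $\rho=\psi\,m^{\mathbb T}_{\xi,U}$ with $\psi\in \mathbb L^1(m^{\mathbb T}_{\xi,U})$ by Radon-Nikodym, the Rajchman hypothesis on $m^{\mathbb T}_{\xi,U}$ together with the weak-$*$ statement of Lemma~\ref{convfaible} (in its $\mathbb T$ version, giving $\widehat{\psi\,m^{\mathbb T}_{\xi,U}}(n)\to0$) yields $\widehat\rho(n)\to0$; summing the finitely many surviving modes and invoking the uniform isometry bound completes this direction. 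The converse is immediate: since $\mu$ is finite one may take $a=b=\mathbf 1_U$, which makes $\rho=m^{\mathbb T}_{\xi,U}$, so Keplerian shear forces $\widehat{m^{\mathbb T}_{\xi,U}}(n)\to0$, i.e.\ the Rajchman property.

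The main obstacle I anticipate lies in the passage from the bare character $e^{2i\pi n\langle\xi\mid v_U\rangle}$ to the weighted integrals: one must show that the unweighted Rajchman property of $m^{\mathbb T}_{\xi,U}$ propagates to every admissible density $\overline b\,a$, which is precisely what the absolute-continuity argument combined with Lemma~\ref{convfaible} is designed to supply. A secondary point requiring care is the coherence of the purely local description of $Pf$ with the genuinely global invariant $\sigma$-algebra $\mathcal I$, and the verification that the partition-of-unity reduction preserves the atom removal encoded by the restriction to $\mathbb T\setminus\{0\}$. By contrast, the control of the infinite fibrewise Fourier expansion uniformly in $n$ is handled cleanly by the isometry bound, so it is these measure-theoretic transfer steps, and not any quantitative estimate, that form the heart of the argument.
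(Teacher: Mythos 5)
Your proposal is correct and follows essentially the same route as the paper: reduction by density to fiber-characters $a(x)e^{2i\pi\langle\xi\vert y\rangle}$ supported in single charts, identification of the conditional correlation with a Fourier coefficient of the pushforward measure on $\mathbb{T}\setminus\{0\}$, and transfer of the Rajchman property to weighted integrals via the weak-$*$ statement of Lemma~\ref{convfaible}. Your write-up is in fact more complete than the paper's own proof of the discrete statement (which only sketches the reciprocal implication, deferring to the Fourier-decomposition argument carried out in full for the continuous analogue, Theorem~\ref{mtcont}); your two substitutions --- the von Neumann mean ergodic theorem in place of the paper's appeal to Birkhoff to identify $\mathbb{E}_\mu(f\vert\mathcal{I})$, and the explicit Radon--Nikodym absolute-continuity step in place of the paper's direct invocation of the weak-$*$ lemma for $L^1$ densities --- are cosmetic rather than structural differences.
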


\begin{proof}
Let us start with the direct implication.
Let $\xi\in\mathbb{Z}^d$, $\xi\neq0$, and $U\in\mathcal{A}$.
Take \[f_1 : x\in \pi^{-1}(U)\mapsto \mathds{1}_{\left(\langle\xi\vert v_U(\cdot)\rangle-\lfloor \langle\xi\vert v_U(\cdot)\rangle\rfloor\right)^{-1}({\mathbb{T}\setminus\{0\}})}(\pi(x))e^{2i\pi\langle\xi\vert \pi_2\circ \psi_U(x)\rangle}.\]
Take $f_2 =f_1$. For all $n\in\mathbb{N}$, \[
\intInd{\mathbb{T}}{e^{2i\pi n z}}{m^\mathbb{T}_{\xi,U}(z)}=
\intInd{\pi^{-1}(U)}{\overline{f_1}\cdot f_2\circ T^n}{\mu(x)}
\xrightarrow[n\to+\infty]{} \mathbb{E}_\mu\left(\mathbb{E}_\mu(\overline{f_1}\mathds{1}_{\pi^{-1}(U)}|\mathcal{I})\mathbb{E}_\mu(f_2|\mathcal{I})\right)
\]
by Keplerian shear.
Using the explicit form of $f_2$ and {Birkhoff} Theorem we find that  $\mathbb{E}_\mu(f_2\vert \mathcal{I})=0$, therefore
\[\intInd{\mathbb{T}}{e^{2i\pi n z}}{m^\mathbb{T}_{\xi,U}(z)}\xrightarrow[n\to+\infty]{}0.\]
Hence $m^\mathbb{T}_{\xi,U}$ is {Rajchman}.

Let us treat the reciprocal implication.
Using a partition of unity the problem reduces, for
one chart $U\in\mathcal{A}$ and two functions $f_1$ and $f_2$ in $\mathbb{L}^2(U\times\mathbb{T}^d,{\mu_\pi}_{\vert U}\otimes\lambda)$, to the convergence of the covariance 
\[
C_n:=\intInd{U\times \mathbb{T}^d}{\overline{f_1}(x,y)f_2(x,y+nv_U(x))}{(\mu_\pi)_{\vert U}\otimes\lambda(x,y)}.
\]
By Lemma \ref{convfaible_discrete}, and the Fourier decomposition on $\mathbb{T}^d$ of $f_1(x,\cdot)$ and $f_2(x,\cdot)$ for all charts $U\in\mathcal{A}$, all $x\in U$, we get
\begin{equation}\label{lasomme}
C_n=\sum_{\xi\in\mathbb{Z}^d} I_n(\xi), \text{ where } \intInd{U}{\overline{\widehat{f_1(x,\cdot)}(\xi)}\widehat{f_2(x,\cdot)}(\xi)e^{2in\pi\langle \xi\vert v_U(x)\rangle}}{(\mu_\pi)_{\vert U}(x)}.
\end{equation}

For $\xi\in\mathbb{Z}^d$, writing $a^{(1)}_\xi(x)=\widehat{f_1(x,\cdot)}(\xi)$ and $a^{(2)}_\xi(x)=\widehat{f_2(x,\cdot)}(\xi)$, we have,
\begin{equation}\label{eq:develop_cond}
    I_n(\xi)=\intInd{U}{\mathbb{E}\left(\overline{a^{(1)}_\xi}a^{(2)}_\xi\vert\langle \xi\vert v_U(x)\rangle\right)e^{2in\pi\langle \xi\vert v_U(x)\rangle}}{(\mu_\pi)_{\vert U}(x)}.
\end{equation}
Let $g$ be the measurable function such that $\mathbb{E}\left(\overline{a^{(1)}_\xi}a^{(2)}_\xi\vert\langle \xi\vert v_U(x)\rangle\right)=g(\langle \xi\vert v_U(x)\rangle)$.
By transfert theorem, for $\xi\ne 0$,
\begin{equation}\label{eq:domine_int}
    \intInd{U}{\mathbb{E}\left(\overline{a^{(1)}_\xi}a^{(2)}_\xi\vert\langle \xi\vert v_U(x)\rangle\right)e^{2in\pi\langle \xi\vert v_U(x)\rangle}\mathds{1}_{\langle\xi\vert v_U(x)\rangle\ne 0}}{(\mu_\pi)_{\vert U}(x)}=\intInd{\mathbb{T}}{g(z)e^{2i\pi n z}}{m^{\mathbb{T}}_{\xi,U}(z)},
\end{equation}
which converges to $0$ by hypothesis.

For $\xi\in\mathbb{Z}^d$, for all $n\in\mathbb{N}$ we have
\begin{equation}\label{eq:domine_sum}
    \abso{\intInd{U}{\overline{a^{(1)}_\xi(x)}a^{(2)}_\xi(x)e^{2in\pi\langle \xi\vert v_U(x)\rangle}}{(\mu_\pi)_{\vert U}(x)}}\leq \norme{a^{(1)}_\xi}_{\mathbb{L}^2}\norme{a^{(2)}_\xi}_{\mathbb{L}^2}
\end{equation}
and
\begin{equation}
    \sum_{\xi\in\mathbb{Z}^d}\norme{a^{(1)}_\xi}_{\mathbb{L}^2}\norme{a^{(2)}_\xi}_{\mathbb{L}^2}\leq \norme{f_1}_{\mathbb{L}^2}\norme{f_2}_{\mathbb{L}^2}.
\end{equation}
Therefore, using  \eqref{eq:develop_cond} and \eqref{eq:domine_int}, we can apply the dominated convergence theorem in \eqref{lasomme}, which proves that $C_n$ converges.
\end{proof}

\subsection{Previous results}
We recall some results obtained by Thomine on this topic in the discrete case for Lebesgue measure.

\begin{definition}[Anisotropic Sobolev spaces \cite{DaTho} Section 2.5.1]

Let $s>0$ and $h : (x,y)\in\mathbb{R}^2\mapsto \begin{cases}
\left(1+\frac{x^2}{y^2}\right)^{\frac{1}{2}}\, if \, y\ne 0\\
1\,else\end{cases}$.
We define an anisotropic {Sobolev} space of order $s>0$ like this :

\[H^{s,0}(\mathbb{T}^2) :=\left\{f\in \mathbb{L}^2(\mathbb{T}^2) : \sum_{\xi\in\mathbb{Z}^d}\abso{\widehat{f}(\xi)}^2h^{2s}(\xi)\in\mathbb{R}\right\}.\]
\end{definition}

Provided the observables have some regularity, one can get an upper estimate of the decay of conditional correlations for the transvection, as expressed below.
\begin{proposition}[\cite{DaTho} Proposition 2.10]
Considering the dynamical system $(\mathbb{T}^2,\lambda\otimes\lambda,T)$ with $T : (x,y)\in\mathbb{T}^2\mapsto(x,y+x)$, we have that: 
for $s>0$, for all $f_1,f_2\in H^{s,0}(\mathbb{T}^2),n\in\mathbb{N}^*$
\begin{equation} \abso{\mathbb{E}_{\lambda\otimes\lambda}(Cov_n(f_1,f_2\vert\mathcal{I}))}\leq \frac{4^s}{n^{2s}}\norme{f_1}_{H^{s,0}(\mathbb{T}^2)}\norme{f_2}_{H^{s,0}(\mathbb{T}^2)}.\end{equation}
\end{proposition}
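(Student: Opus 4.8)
The first coordinate is fixed by $T$, so the invariant $\sigma$-algebra $\mathcal{I}$ is, up to null sets, generated by $x$ (the fibre rotations $y\mapsto y+x$ being ergodic for a.e.\ $x$), and $\mathbb{E}_{\lambda\otimes\lambda}(f\mid\mathcal{I})$ is the fibre average $x\mapsto\int_{\mathbb{T}}f(x,y)\,dy$; in Fourier coordinates this keeps exactly the modes with zero frequency in $y$. The plan is therefore to expand $f_j=\sum_{(k,l)\in\mathbb{Z}^2}\widehat{f_j}(k,l)e^{2i\pi(kx+ly)}$, use $f_2\circ T^n(x,y)=\sum_{(k,l)}\widehat{f_2}(k,l)e^{2i\pi((k+nl)x+ly)}$, and evaluate the two terms of $\mathbb{E}_{\lambda\otimes\lambda}(Cov_n(f_1,f_2\mid\mathcal{I}))$ by orthogonality of the characters. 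The correlation term produces $\sum_{(k,l)}\overline{\widehat{f_1}(k+nl,l)}\,\widehat{f_2}(k,l)$, whereas the product-of-conditional-expectations term produces exactly the sub-sum over $l=0$. These cancel, leaving
\[
\mathbb{E}_{\lambda\otimes\lambda}(Cov_n(f_1,f_2\mid\mathcal{I}))=\sum_{l\neq 0}\sum_{k\in\mathbb{Z}}\overline{\widehat{f_1}(k+nl,l)}\,\widehat{f_2}(k,l).
\]

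Next I would insert the anisotropic weights into this sum, writing each summand as
\[
\Big(\overline{\widehat{f_1}(k+nl,l)}\,h^{s}(k+nl,l)\Big)\Big(\widehat{f_2}(k,l)\,h^{s}(k,l)\Big)\cdot\frac{1}{h^{s}(k+nl,l)\,h^{s}(k,l)},
\]
and apply Cauchy--Schwarz to the double sum over $\{l\neq0\}\times\mathbb{Z}$. The structural point that makes the two Sobolev norms reappear cleanly is that for each fixed $l\neq0$ the map $k\mapsto k+nl$ is a bijection of $\mathbb{Z}$, so that
\[
\sum_{l\neq0}\sum_{k}\big|\widehat{f_1}(k+nl,l)\big|^2 h^{2s}(k+nl,l)\le\|f_1\|_{H^{s,0}(\mathbb{T}^2)}^2,
\]
and symmetrically the $f_2$-factor is bounded by $\|f_2\|_{H^{s,0}(\mathbb{T}^2)}^2$. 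Hence, granting a uniform scalar bound $h^{-s}(k+nl,l)\,h^{-s}(k,l)\le 4^{s}n^{-2s}$ for all $k\in\mathbb{Z}$, $l\neq0$ and $n\ge1$, Cauchy--Schwarz immediately delivers the claimed estimate.

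The whole statement thus reduces to the pointwise inequality $h(k+nl,l)\,h(k,l)\ge n^2/4$; writing $u=k/l$, this is the lower bound $(1+u^2)\big(1+(u+n)^2\big)\ge n^4/16$, uniform in $u$ and $n$, and I expect this to be the main obstacle. Minimizing the left-hand side over real $u$ is a one-variable calculus exercise (the substitution $v=u+n/2$ symmetrizes the product), and the delicate regime is $u\approx 0$ and $u\approx-n$, where one factor degenerates to $1$ while the other is only of size $n$: this is exactly where the correct power of $n$ and the constant are decided, so the argument must be carried out carefully there and it is the crux of the proof. Once the sharp weight bound is in hand, the preceding paragraph closes the argument; the small values of $n$ for which $4^s n^{-2s}\ge 1$ can be disposed of separately, since conditional correlations are in any case controlled by the $\mathbb{L}^2$, hence $H^{s,0}$, norms.
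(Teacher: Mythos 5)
Your structural reduction is sound and is surely the intended argument (note the paper itself states this proposition without proof, quoting it from Thomine's work): the identification of $\mathcal{I}$ with the first-coordinate $\sigma$-algebra modulo null sets, the computation
\[
\mathbb{E}_{\lambda\otimes\lambda}(Cov_n(f_1,f_2\vert\mathcal{I}))=\sum_{l\neq 0}\sum_{k\in\mathbb{Z}}\overline{\widehat{f_1}(k+nl,l)}\,\widehat{f_2}(k,l),
\]
and the Cauchy--Schwarz step using the bijection $k\mapsto k+nl$ at fixed $l\neq0$ are all correct. The genuine gap is the pointwise weight bound you postpone to the end: it is not a delicate minimization to be "carried out carefully", it is false. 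At $u=0$ your target inequality $(1+u^2)\bigl(1+(u+n)^2\bigr)\ge n^4/16$ reads $1+n^2\ge n^4/16$, which fails for every $n\ge 5$; indeed your own symmetrization $v=u+n/2$, $w=v^2$ shows that the true minimum over $u\in\mathbb{R}$ equals $n^2$ (for $n\ge 2$). Consequently, with the weight $h(x,y)=(1+x^2/y^2)^{1/2}$ as defined in the paper, Cauchy--Schwarz can only yield a bound of the form $2^s n^{-s}\norme{f_1}_{H^{s,0}(\mathbb{T}^2)}\norme{f_2}_{H^{s,0}(\mathbb{T}^2)}$. Worse, no other method can do better: for $s>\frac12$ the stated $n^{-2s}$ rate is false for this space. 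Take $f_1=f_2=e^{2i\pi y}g(x)$ with $\widehat{g}(k)=(1+k^2)^{-\frac12\left(s+\frac12+\varepsilon\right)}$ and $0<\varepsilon<s-\frac12$; then $f_j\in H^{s,0}(\mathbb{T}^2)$, while the covariance $\sum_k\widehat{g}(k+n)\widehat{g}(k)\ge\widehat{g}(n)\widehat{g}(0)\gtrsim n^{-s-\frac12-\varepsilon}$, which eventually exceeds $4^s n^{-2s}$ times the squared norm.

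The resolution is that the constant $\frac{4^s}{n^{2s}}$ is calibrated for Thomine's normalization of the weight, namely $h(x,y)=1+\frac{x^2}{y^2}$ \emph{without} the exponent $\frac12$, so that the norm weight is $(1+k^2/l^2)^{2s}$; the paper's definition of $h$ is the inconsistent ingredient. With that weight your scheme closes at once, and the step you identified as the crux becomes trivial: one needs $h(k+nl,l)\,h(k,l)=(1+u^2)\bigl(1+(u+n)^2\bigr)\ge n^2/4$, which follows from the dichotomy that either $\abso{u}\ge n/2$ or $\abso{u+n}\ge n/2$, so that one of the two factors is at least $n^2/4$ while the other is at least $1$; raising to the power $s$ is precisely what produces the constant $4^s$. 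So keep your Fourier and Cauchy--Schwarz skeleton, but either correct the weight and insert this two-case estimate, or keep the paper's $h$ and weaken the conclusion to $2^s n^{-s}$; no careful local analysis near $u\approx 0$ and $u\approx -n$ can rescue the inequality in the form you stated it.
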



\subsection{Generalizations to singular measures}
Here we present a version of the previous result for non-absolutely continuous measures.

\begin{definition}[Sobolev space on the torus]
We will note for $s>0,d\in\mathbb{N}^*$ the Sobolev space \[H^{s}(\mathbb{T}^d) := \left\{f\in \mathbb{C}^{\mathbb{T}^d} : \sum_{k\in\mathbb{Z}^d}\abso{\widehat{f}(k)}^2\left(1+\norme{k}_2^2\right)^{s}\in\mathbb{R}\right\}.\]
\end{definition}

\begin{definition}
We will note for $s>0,d\in\mathbb{N}^*$ and $f\in H^s(\mathbb{T}^d),$ \begin{equation}\label{eq:def_sup_sobolev}C_f(s) := \sup\left\{\abso{\widehat{f}(\xi)}\left(1+\norme{\xi}_2^2\right)^{\frac{s}{2}} : \xi\in \mathbb{Z}^d\right\}.\end{equation}
\end{definition}

Again with some regularity on the observables, and assuming in addition a positive Rajchman order on the distribution of the fibers, one can bound the decay of conditional correlations.

\begin{proposition} \label{rajchdisc}
Let $\mu$ be a measure on $\mathbb{T}$ of Rajchman order equal to $r>0$.
Considering the dynamical system $(\mathbb{T}^2,\mu\otimes\lambda,T)$ with $T : (x,y)\in\mathbb{T}^2\mapsto(x,y+x)$,
we have that:

for all $s>2$, there exists $\gamma_s\in \left[\min\left\{s-1,r\right\},r\right]$ such that for all $f_1,f_2\in H^s(\mathbb{T}^2)$, for all $\gamma<\gamma_s$, there exists $C>0$ satisfying for all $n\in \mathbb{N}^*$, 
\begin{equation}\abso{\mathbb{E}_{\mu\otimes\lambda}(Cov_n(f_1,f_2\vert\mathcal{I}))}\leq \frac{C}{n^\gamma},\end{equation}
where $\gamma_s$ is optimal in the sense that
\begin{equation}
    \gamma_s = \sup\{\alpha >0 : \exists C>0, \forall f_1,f_2\in H^s(\mathbb{T}^2), \forall n\ ,\abso{\mathbb{E}_{\mu\otimes\lambda} (Cov_n(f_1,f_2\vert\mathcal{I}))}\leq \frac{C}{n^\alpha}\}.
\end{equation}

\end{proposition}

\begin{proof}
Let $s>2$, $n\in \mathbb{N}^*$ and $f_1,f_2\in H^s(\mathbb{T}^2).$
We recall that for $i\in\{1,2\}$, using the definition \eqref{eq:def_sup_sobolev}, for $q_1,q_2\in\mathbb{Z}$
\begin{equation}\label{eq:cont_sobolev}
    \abso{\widehat{f_i}(q_1,q_2)(1+q_1^2+q_2^2)^{\frac{s}{2}}}\leq C_{f_i}(s).
\end{equation}

For $\xi_1,\xi_2\in\mathbb{Z}$, let \[g_{(\xi_1,\xi_2)} : x\in\mathbb{T}\mapsto e^{2i\pi\xi_1x}\left(\intInd{\mathbb{T}}{\overline{f_1}(x,y)e^{2i\pi\xi_2 y}}{\lambda(y)}\right).\]
By Fourier decomposition, for $\xi_1,\xi_2\in \mathbb{Z}^d$ and for all $x\in\mathbb{T}$,
\begin{equation}\label{eq:Fourier_partial}
    g_{(\xi_1,\xi_2)}(x)=\sum_{k\in\mathbb{Z}}\widehat{f_1}(k,\xi_2)e^{2i\pi (\xi_1-k)x}.
\end{equation}

By \eqref{eq:Fourier_partial},
\begin{equation}\label{eq:int_Fourier_relation}\intInd{\mathbb{T}^2}{\overline{f_1}\left(f_2\circ T^n\right)}{\left(\mu\otimes\lambda\right)}=\sum_{\xi\in\mathbb{Z}^2}\widehat{f_2}(\xi)\intInd{\mathbb{T}}{e^{2i\pi n\xi_2x}g_{(\xi_1,\xi_2)}(x)}{\mu(x)}\end{equation}
and for all $k,\xi_1,\xi_2\in\mathbb{Z}$,
\begin{equation}\label{eq:shift_Fourier}\widehat{g_{(\xi_1,\xi_2)}}(k)=\widehat{f_1}(\xi_1-k,\xi_2).\end{equation}

By \eqref{eq:cont_sobolev} and \eqref{eq:shift_Fourier}, \begin{equation}\label{eq:sup_Fourier}\abso{\widehat{g_{(\xi_1,\xi_2)}}(k)}(1+k^2)^{\frac{s}{2}}\leq C_{f_1}\frac{(1+k^2)^{\frac{s}{2}}}{(1+\xi_2^2+(\xi_1-k)^2)^{\frac{s}{2}}}.\end{equation}
Moreover, by \eqref{eq:int_Fourier_relation} and \eqref{eq:shift_Fourier}, for $n\in\mathbb{N}$,  \[\mathbb{E}_{\mu\otimes\lambda}(Cov_n(f_1,f_2\vert \mathcal{I}))=\sum_{\xi\in\mathbb{Z}\times\mathbb{Z}^*}\widehat{f_2}(\xi)\intInd{\mathbb{T}}{e^{2i\pi n\xi_2x}g_{(\xi_1,\xi_2)}(x)}{\mu(x)}.\]
and
\begin{equation}
    \label{eq:correlation}
    \mathbb{E}_{\mu\otimes\lambda}(Cov_n(f_1,f_2\vert \mathcal{I}))=\sum_{(k,\xi)\in\mathbb{Z}\times \mathbb{Z}\times\mathbb{Z}^*}\widehat{f_2}(\xi)\widehat{f_1}(k,\xi_2)\widehat{\mu}(\xi_1-k+n\xi_2).
\end{equation}
We estimate the sum above, considering three different cases. 
In the first case, we suppose that $k,\xi_1,\xi_2\in\mathbb{Z}$ and $n\in\mathbb{N}$ are such that \begin{equation}\label{eq:assumption_1.1}\abso{\xi_1-k+n\xi_2}\leq \frac{n\abso{\xi_2}}{4}\end{equation}
and
\begin{equation}\label{eq:assumption_1.2}
\abso{n\xi_2-k}\leq \frac{n\abso{\xi_2}}{2}.
\end{equation}
Therefore, \begin{equation}\label{eq:ineq_int_1}\abso{k}\geq \frac{n\abso{\xi_2}}{2},\end{equation} \begin{equation}\label{eq:ineq_int_2}\abso{\xi_1}\geq \frac{3}{4}n\abso{\xi_2}\end{equation} and \begin{equation}\label{eq:ineq_int_3}\abso{\widehat{\mu}(\xi_1-k+n\xi_2)}\leq 1.\end{equation}
Assuming \eqref{eq:assumption_1.1} and \eqref{eq:assumption_1.2}, by \eqref{eq:sup_Fourier}, \eqref{eq:ineq_int_1}, \eqref{eq:ineq_int_2} and \eqref{eq:ineq_int_3}, \begin{equation}\label{eq:first_control}\sum_{\substack{(k,\xi_1,\xi_2)\in \mathbb{Z}\times\mathbb{Z}\times\mathbb{Z}^*\\\abso{\xi_1-k+n\xi_2}\leq \frac{n\abso{\xi_2}}{4}\\\abso{n\xi_2-k}\leq \frac{n\abso{\xi_2}}{2}}}\abso{\widehat{f_2}(\xi_1,\xi_2)}\abso{\widehat{f_1}(k,\xi_2)}\abso{\widehat{\mu}(\xi_1-k+n\xi_2)}\leq \frac{2^{6(s+1)}\zeta(2(s-1))C_{f_1}(s)C_{f_2}(s)}{3^sn^{2(s-1)}}.\end{equation}

In the second case, we assume \eqref{eq:assumption_1.1} and \begin{equation}\label{eq:assumption_2.2}\abso{n\xi_2-k}> \frac{n\abso{\xi_2}}{2}.\end{equation}
Thus, we get again \eqref{eq:ineq_int_3} and \begin{equation}\label{eq:ineq_int_4}\abso{\xi_1}>\frac{n\abso{\xi_2}}{4}.\end{equation}
By \eqref{eq:ineq_int_3} and \eqref{eq:cont_sobolev},  $\abso{\widehat{f_1}(k,\xi_2)}\abso{\widehat{\mu}(\xi_1-k+n\xi_2)}\leq \frac{C_{f_1}(s)}{(1+k^2+\xi_2^2)^{\frac{s}{2}}}$
Moreover, by \eqref{eq:cont_sobolev} and \eqref{eq:ineq_int_4}, \begin{equation}\label{eq:ineq_int_5}\abso{\widehat{f_2}(\xi_1,\xi_2)}\leq \frac{4^sC_{f_2}(s)}{\abso{\xi_2}^{s}n^{s}}.\end{equation}
By \eqref{eq:ineq_int_5} and serie-integral criterius with $s>2$, there exists $C_s>0$ such that,  \[\sum_{\substack{(k,\xi_1,\xi_2)\in \mathbb{Z}\times\mathbb{Z}\times\mathbb{Z}^*\\\abso{\xi_1-k+n\xi_2}\leq \frac{n\abso{\xi_2}}{4}\\\abso{n\xi_2-k}> \frac{n\abso{\xi_2}}{2}}}\frac{C_{f_1}(s)\abso{\widehat{f_2}(\xi_1,\xi_2)}}{(1+k^2+\xi_2^2)^{\frac{s}{2}}}\leq \frac{C_s4^sC_{f_1}(s)C_{f_2}(s)}{n^{\left(s-1\right)}}.\]

Finally, in the third case we assume \begin{equation}\label{eq:assumption_3.1}\abso{\xi_1-k+n\xi_2}> \frac{n\abso{\xi_2}}{4}.\end{equation}
$\mu$ is a Rajchman measure and $r>0$ its order, by he definition \eqref{eq:fourier_decay_discrete}, there exists $C_\mu>0$ such that for $q\in\mathbb{Z}$,
\begin{equation}\label{eq:rajchman_order_constant}
    \abso{\widehat{\mu}(q)}\leq\frac{C_\mu}{\abso{q}^r}.
\end{equation}
Applying \eqref{eq:rajchman_order_constant} to $q=\xi_1-k+n\xi_2$ and by assumption \eqref{eq:assumption_3.1}, \begin{equation}\label{eq:mu_control}\abso{\widehat{\mu}(\xi_1-k+n\xi_2)}\leq \frac{4^rC_\mu}{(n\abso{\xi_2})^{r}}.\end{equation}
By \eqref{eq:mu_control}, \eqref{eq:cont_sobolev} and serie-integral criterius, for $s>2$ there exists a constant $C_s^'$ such that \begin{equation}\sum_{\substack{(k,\xi_1,\xi_2)\in \mathbb{Z}\times\mathbb{Z}\times\mathbb{Z}^*\\\abso{\xi_1-k+n\xi_2}> \frac{n\abso{\xi_2}}{4}}}\frac{C_{f_1}(s)\abso{\widehat{f_2}(\xi_1,\xi_2)}\abso{\widehat{\mu}(\xi_1-k+n\xi_2)}}{(1+k^2+\xi_2^2)^{\frac{s}{2}}}\leq \frac{4^r\left(1+\zeta\left(\frac{s}{2}\right)\right)C_\mu C_s^'C_{f_1}(s)C_{f_2}(s)}{n^{r}}.\end{equation}
Let pose  \[M(r,s)=\sup\left\{4^r\left(1+\zeta\left(\frac{s}{2}\right)\right)C_\mu C_s^',4^sC_s,2^{6(s+1)}\zeta(2(s-1))\right\}.\]
We get 
\begin{equation}\label{eq:estimation}\abso{\mathbb{E}_{\mu\otimes\lambda}\left(Cov_n(f_1,f_2\vert \mathcal{I})\right)}\leq M(r,s)\left(\frac{C_{f_1}C_{f_2}}{n^{\min\left\{s-1,r\right\}}}\right).\end{equation}

We now prove the optimality of the exponent.
The observable $f$ defined by $f(x,y)=e^{2i\pi y}$ belongs to $H^s(\mathbb{T}^2)$.
Moreover,
\begin{equation}
    \intInd{\mathbb{T}^2}{\overline{f}(x,y)f(T^n(x,y))}{\mu\otimes\lambda(x,y)}=\widehat{\mu}(n).
\end{equation}
Thus, for $\gamma<\gamma_s$, there exists $C>0$ such that for all $n\in\mathbb{N}^*$,
\begin{equation}
    \abso{\widehat{\mu}(n)}\leq \frac{C}{n^{\gamma}}.
\end{equation}
By definition of the Rajchman order $r$, $r\geq \gamma$. Therefore $r\geq \gamma_s$.
\end{proof}

\begin{remark}
The convergence speed is bounded by the order of {Rajchman} of the measure $\mu$; After some threshold, increasing the regularity will not accelerate the decay of correlation. 
On the other hand, this is consistent with the result obtained with the {Lebesgue} measure because its {Rajchman} order is $+\infty$, which removes the maximum imposed by $r$ for the speed of convergence and allows the observation of ever greater speeds of convergence when the regularity of the functions used increases.
\end{remark}

\subsection{Invariant functions and $\sigma-$algebra}

We can identify the $\sigma$-algebra of invariant sets for the discrete flow using {Fourier} series and orthogonality.

\begin{theorem}
Under the setting of Theorem~\ref{main:discrete}, a function $f\in \mathbb{L}^2_\mu(\Omega)$ is invariant according to $T$ if and only if for all $U\in\mathcal{A},$ there exists $(a_k)_{k\in\mathbb{Z}^d}\in \mathbb{L}^2_{\left(\mu_{\pi}\right)_{\vert U}}(U)^{\mathbb{Z}^d}$ such that $\left(\mu_{\pi}\right)_{\vert U}\otimes\lambda-a.e \,(x,y)\in U\times\mathbb{T}^d,$
\[\restreint{f}{\pi^{-1}(U)}(\psi_U^{-1}(x,y))=\sum_{k\in\mathbb{Z}^d, \langle k\vert v_U(x)\rangle\in\mathbb{Z}}a_k(x)e^{2i\pi\langle k\vert y\rangle}.\]
\end{theorem}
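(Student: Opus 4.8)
The plan is to reduce everything to a single trivialising chart and to exploit the uniqueness of Fourier coefficients on the fibre $\mathbb{T}^d$. Fix $U\in\mathcal{A}$ and transport the problem through $\psi_U$: by compatibility of the measure, $\left(\mu_{\vert\pi^{-1}(U)}\right)_{\psi_U}=\left(\mu_{\pi}\right)_{\vert U}\otimes\lambda$, and by compatibility of the dynamics the map reads $\psi_U\circ T\circ\psi_U^{-1}(x,y)=(x,y+v_U(x))$. Writing $F:=f\circ\psi_U^{-1}\in\mathbb{L}^2\big(\left(\mu_{\pi}\right)_{\vert U}\otimes\lambda\big)$, Fubini shows that for $\left(\mu_{\pi}\right)_{\vert U}$-a.a. $x$ the slice $F(x,\cdot)$ lies in $\mathbb{L}^2(\mathbb{T}^d)$, so one may set $a_k(x):=\int_{\mathbb{T}^d}F(x,y)e^{-2i\pi\langle k\vert y\rangle}d\lambda(y)$. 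Parseval gives $\sum_{k}\int_U|a_k|^2 d\left(\mu_{\pi}\right)_{\vert U}=\norme{F}_2^2<\infty$, hence each $a_k\in\mathbb{L}^2_{\left(\mu_{\pi}\right)_{\vert U}}(U)$ and $F(x,y)=\sum_k a_k(x)e^{2i\pi\langle k\vert y\rangle}$ in $\mathbb{L}^2$.

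First I would treat the direct implication. Invariance of $f$ means $F(x,y+v_U(x))=F(x,y)$ for $\left(\mu_{\pi}\right)_{\vert U}\otimes\lambda$-a.a. $(x,y)$. Translating the slice by $v_U(x)$ multiplies its $k$-th coefficient by the character value $e^{2i\pi\langle k\vert v_U(x)\rangle}$, so the $k$-th Fourier coefficient of $F(x,\cdot+v_U(x))$ is $a_k(x)e^{2i\pi\langle k\vert v_U(x)\rangle}$. By slice-wise uniqueness of Fourier coefficients, invariance is equivalent to
\[
a_k(x)\big(e^{2i\pi\langle k\vert v_U(x)\rangle}-1\big)=0\qquad\text{for }\left(\mu_{\pi}\right)_{\vert U}\text{-a.a. }x,\ \forall k\in\mathbb{Z}^d .
\]
Since $\mathbb{Z}^d$ is countable, these countably many a.e. identities hold simultaneously off one null set. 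Thus $a_k(x)=0$ whenever the character $y\mapsto e^{2i\pi\langle k\vert y\rangle}$ is not fixed by the rotation $y\mapsto y+v_U(x)$, i.e. whenever $\langle k\vert v_U(x)\rangle\notin\mathbb{Z}$; the surviving indices are exactly those annihilating the fibre rotation, which is the content of $k\in\mathbb{Z}^d\cap\{v_U(x)\}^{\perp}$. This yields the announced expansion.

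The converse is a direct verification: if $F(x,y)=\sum_{k\in\mathbb{Z}^d\cap\{v_U(x)\}^\perp}a_k(x)e^{2i\pi\langle k\vert y\rangle}$, then every retained term has $\langle k\vert v_U(x)\rangle\in\mathbb{Z}$, so $e^{2i\pi\langle k\vert v_U(x)\rangle}=1$ and $F(x,y+v_U(x))=F(x,y)$, that is $f\circ T=f$; reading this back through $\psi_U$ over all charts gives $T$-invariance of $f$, the measurability needed to glue the local descriptions coming from the measurability of $v_U$ and of $x\mapsto a_k(x)$.

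The step I expect to be the main obstacle is the rigorous, simultaneous slice-wise use of Fourier uniqueness: one must justify that the coefficients of the translated function are given by the clean formula $a_k(x)e^{2i\pi\langle k\vert v_U(x)\rangle}$ for a.a. $x$, which needs measurability of $x\mapsto a_k(x)$ and of $x\mapsto\langle k\vert v_U(x)\rangle\bmod 1$, and then the interchange of ``a.a. $x$'' with ``$\forall k$'' via countability. A secondary but conceptually delicate point is the reading of $\{v_U(x)\}^\perp$: the identity above forces the support of $a_k$ into $\{x:\langle k\vert v_U(x)\rangle\in\mathbb{Z}\}$, which is the annihilator of the closed subgroup $\overline{\langle v_U(x)\rangle}\subseteq\mathbb{T}^d$ generated by the rotation vector, and it is this annihilator that the orthogonality symbol must denote for the statement to be an exact equivalence.
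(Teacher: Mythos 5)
Your setup (chart reduction, slice--wise coefficients $a_k(x)$ via Fubini and Parseval, uniqueness of Fourier coefficients on the fibre, countability in $k$) and your converse coincide with the paper's proof. The genuine divergence is in the direct implication, and it is exactly the point you flag at the end. From
\begin{equation*}
a_k(x)\bigl(e^{2i\pi\langle k\vert v_U(x)\rangle}-1\bigr)=0\qquad\text{for a.a. }x,\ \forall k\in\mathbb{Z}^d,
\end{equation*}
you stop at the annihilator condition $\langle k\vert v_U(x)\rangle\in\mathbb{Z}$ and never use the Keplerian shear hypothesis, whereas the paper reads $\{v_U(x)\}^{\perp}$ literally ($\langle k\vert v_U(x)\rangle=0$, as in its continuous-time analogue, where $t\langle k\vert v_U(x)\rangle\in\mathbb{Z}$ for all $t$ really does force the scalar product to vanish) and invokes Keplerian shear precisely to exclude the intermediate case: it supposes $(\mu_\pi)_{\vert U}\{x:\langle k\vert v_U(x)\rangle=j\}>0$ for some $j\in\mathbb{Z}^*$, observes that $\int_{\{\langle k\vert v_U\rangle=j\}}e^{2i\pi n\langle k\vert v_U(x)\rangle}\,d(\mu_\pi)_{\vert U}(x)$ is a positive constant in $n$, and claims that the Rajchman property of $m^{\mathbb{T}}_{k,U}$ (equivalent to shear by the paper's main discrete theorem) forces this quantity to tend to $0$, a contradiction. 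So the hypothesis your proof leaves unused is the paper's engine for upgrading ``integer'' to ``zero''.

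Your more cautious reading is, however, the defensible one. The measure $m^{\mathbb{T}}_{k,U}$ is by definition the push-forward of $(\mu_\pi)_{\vert U}$ by the fractional part of $\langle k\vert v_U(\cdot)\rangle$ \emph{restricted to} $\mathbb{T}\setminus\{0\}$, and the set $\{x:\langle k\vert v_U(x)\rangle=j\}$ with $j\in\mathbb{Z}^*$ is pushed onto the excluded atom at $0$; its mass is therefore invisible to the Rajchman property, and the paper's contradiction does not go through. Concretely, take $M=\mathbb{T}$, $d=1$, $\mu=\lambda\otimes\lambda$ and $v_U\equiv 1$: then $T$ is the identity, Keplerian shear holds trivially (every $m^{\mathbb{T}}_{\xi,U}$ is the zero measure, hence Rajchman), yet $f(x,y)=e^{2i\pi y}$ is invariant although $1\notin\{k\in\mathbb{Z}:k\,v_U(x)=0\}=\{0\}$. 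Thus with the literal orthogonality reading the stated equivalence fails even under shear, while with your annihilator reading it holds --- and then, as your argument shows, the Keplerian shear hypothesis is superfluous. What your write-up lacks is only the explicit observation that the two readings genuinely differ (the annihilator is also the only notion invariant under changing the lift $v_U$ by an element of $\mathbb{Z}^d$), so that what you prove is the corrected statement rather than the literal one.
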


\begin{proof}
Let $f\in \mathbb{L}^2_\mu(\Omega)$ and $U\in\mathcal{A}$.

Let us prove the direct assertion.
Suppose that $f\circ T=f\,\mu-a.s$.
Let $(x,y)\in U\times \mathbb{T}^d$.
So  \[f(T(\psi_U^{-1}(x,y)))=f(\psi_U^{-1}(x,y)).\]
Namely $f(\psi_U^{-1}(x,y+v_U(x)))=f(\psi_U^{-1}(x,y))$.
By {Fourier} series decomposition: \begin{equation}\label{eq:fourier_inv}\sum_{k\in\mathbb{Z}^d}\widehat{f\circ \psi_U(x,\cdot)}(k)e^{2i\pi\langle k\vert y+v_U(x)\rangle}=\sum_{k\in\mathbb{Z}^d}\widehat{f\circ \psi_U(x,\cdot)}(k)e^{2i\pi\langle k\vert y\rangle}.\end{equation}
By uniqueness of the coefficients of a {Fourier} series in \eqref{eq:fourier_inv}, for $\mu-a.s\,x\in U$, for all $k\in\mathbb{Z}^d$,
\begin{equation}\label{eq:zero_inv}\widehat{f\circ \psi_U(x,\cdot)}(k)\left(e^{2i\pi \langle k\vert v_U(x)\rangle}-1\right)=0.\end{equation}
Thus $\langle k\vert v_U(x)\rangle\in\mathbb{Z}$ or $\widehat{f\circ \psi_U(x,\cdot)}(k)=0$.

Let's prove the reciprocal.
Suppose that $f$ satisfies for all charts $U\in \mathcal{A}$, 
$\left(\mu_{\pi}\right)_{\vert U}\otimes\lambda-a.s \,(x,y)\in U\times\mathbb{T}^d$
\begin{equation}\label{formeinv}
f(\psi_U^{-1}(x,y))=\sum_{k\in\mathbb{Z}^d,\langle k\vert v_U(x)\rangle\in\mathbb{Z}}a_k(x)e^{2i\pi\langle k\vert y\rangle}.
\end{equation}
Let $z\in \Omega$. There exists $U\in\mathcal{A}$ such that $z\in \pi^{-1}(U)$.
We have
 \[f(T(z))=f(T(\psi_U^{-1}(\psi_U(z))))=f(\psi_U^{-1}(\pi(z),\pi_2\circ \psi_U(z)+v_U(\pi(z)))).\]
Applying \eqref{formeinv} with the above computation, 
\[\begin{array}{ll}
f(T(z))&=
\displaystyle\sum_{k\in\mathbb{Z}^d,\langle k\vert v_U(x)\rangle\in\mathbb{Z}}a_k(\pi(z))e^{2i\pi\langle k\vert \pi_2\circ\psi_U(z)+v_U(\pi(z))\rangle}\\
&=\displaystyle\sum_{k\in\mathbb{Z}^d,\langle k\vert v_U(x)\rangle\in\mathbb{Z}}a_k( \pi(z))e^{2i\pi\langle k\vert \pi_2\circ\psi_U(z)\rangle}\\
&=f(z).
\end{array}
\]
\end{proof}


\section{Main result in continuous dynamical systems}\label{sec:main_cont}

As mentioned in the introduction, Damien Thomine gave many results which guarantee Keplerian shear in~\cite{DaTho}, in particular the following one.

\begin{theorem}[Result in the regular case, Theorem~3.3 in \cite{DaTho}]
\bigbreak
\label{thomcis}
Let $(\Omega, \mu, (g_t)_{t\in\mathbb{R}})$ be a compatible flow with a compatible measure on a tori (affine) bundle as in Subsection~\ref{toribundle}.

Assume that:
\begin{enumerate}
\item{$\mu \ll \lambda$},
\item{All velocity vectors $v_U$ are $\mathcal{C}^1$},
\item{$\forall U\in \mathcal{A},\mu\left(\union{\xi\in\mathbb{Z}^d\setminus\{0\}}{\{x\in U : d\langle\xi\vert v_U(x)\rangle=0\}}\right)=0$}.
\end{enumerate}
Then the system exhibits Keplerian shear. 
Moreover $$\mathcal{I} =\left\{B\in \mathcal{B}(\Omega) : \exists A\in\mathcal{B}(M),\mu\left(B\Delta \pi^{-1}(A)\right)=0\right\}.$$
\bigbreak
\end{theorem}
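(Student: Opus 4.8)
The plan is to reduce the statement to a fibre-wise Fourier analysis in local charts and to convert the decay of correlations into a Riemann--Lebesgue statement for pushed-forward measures. By the cited proposition of \cite{DaTho}, Keplerian shear is equivalent to $\mathbb{E}_\mu(Cov_t(f_1,f_2\vert\mathcal{I}))\xrightarrow[t\to+\infty]{}0$ for all $f_1,f_2\in\mathbb{L}^2_\mu(\Omega)$. Since $\pi\circ g_t=\pi$, the flow acts fibrewise, so I would first fix a chart $U\in\mathcal{A}$, use the compatibility of $\mu$ to identify $\mu_{\vert\pi^{-1}(U)}$ with $(\mu_\pi)_{\vert U}\otimes\lambda$ on $U\times\mathbb{T}^d$, and expand $f_i\circ\psi_U^{-1}(x,\cdot)=\sum_{\xi\in\mathbb{Z}^d}\widehat{(f_i)}_\xi(x)e^{2i\pi\langle\xi\vert y\rangle}$. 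The relation $\psi_U\circ g_t\circ\psi_U^{-1}(x,y)=(x,y+tv_U(x))$ multiplies the $\xi$-th mode by $e^{2i\pi t\langle\xi\vert v_U(x)\rangle}$, so Parseval in $y$ gives $\int_{\pi^{-1}(U)}\overline{f_1}(f_2\circ g_t)\,d\mu=\sum_{\xi\in\mathbb{Z}^d}\int_U\overline{\widehat{(f_1)}_\xi(x)}\,\widehat{(f_2)}_\xi(x)\,e^{2i\pi t\langle\xi\vert v_U(x)\rangle}\,d(\mu_\pi)_{\vert U}(x)$.

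The heart of the argument is the decay of the summands for $\xi\ne0$. Each such summand equals $\widehat{m_\xi}(t)$, where $m_\xi$ is the pushforward of the complex measure $\overline{\widehat{(f_1)}_\xi}\,\widehat{(f_2)}_\xi\,(\mu_\pi)_{\vert U}$ by the $\mathcal{C}^1$ map $\phi_\xi:=\langle\xi\vert v_U(\cdot)\rangle$. I would show $m_\xi\ll\lambda$ on $\mathbb{R}$, whence Riemann--Lebesgue yields $\widehat{m_\xi}(t)\to0$: by Assumption~(1), $(\mu_\pi)_{\vert U}\ll\lambda$, so $m_\xi$ has a base density $G\in\mathbb{L}^1$; on $\{d\phi_\xi\ne0\}$ the map $\phi_\xi$ is a submersion and the coarea formula forces the image of any Lebesgue-null set to carry no mass, while on $\{d\phi_\xi=0\}$, which is $(\mu_\pi)_{\vert U}$-null by Assumption~(3), the density contributes nothing. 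This is the step I expect to be the main obstacle, since it requires handling the critical set of $\phi_\xi$ through a Sard/coarea argument. Interchanging sum and limit is then justified by dominated convergence, the dominating bound $\sum_\xi\int_U\lvert\widehat{(f_1)}_\xi\widehat{(f_2)}_\xi\rvert\,d(\mu_\pi)_{\vert U}\le\Vert f_1\Vert_2\Vert f_2\Vert_2$ following from Cauchy--Schwarz in $\xi$ and Parseval. Only the mode $\xi=0$ survives, and since $\widehat{(f_i)}_0(x)=\mathbb{E}_\mu(f_i\vert\mathcal{I})\circ\psi_U^{-1}(x,\cdot)$ (using the identification of $\mathcal{I}$ proved below), the limit equals $\int\overline{\mathbb{E}_\mu(f_1\vert\mathcal{I})}\,\mathbb{E}_\mu(f_2\vert\mathcal{I})\,d\mu$, giving $\mathbb{E}_\mu(Cov_t)\to0$.

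For the description of $\mathcal{I}$, I would first prove that for $(\mu_\pi)_{\vert U}$-almost every $x$ one has $\{v_U(x)\}^\perp\cap\mathbb{Z}^d=\{0\}$. Indeed, fix $\xi\ne0$ and set $S_\xi=\{x\in U:\phi_\xi(x)=0\}$; on the part of $S_\xi$ where $d\phi_\xi\ne0$ the implicit function theorem makes $S_\xi$ locally a $\mathcal{C}^1$ hypersurface, hence $\lambda$-null and thus $(\mu_\pi)_{\vert U}$-null by Assumption~(1), while the remaining part lies in $\{d\phi_\xi=0\}$, null by Assumption~(3); a countable union over $\xi\in\mathbb{Z}^d\setminus\{0_{\mathbb{Z}^d}\}$ gives the claim. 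Feeding this into the orthogonality characterisation of invariant sets established in Section~3, the intersection $\bigcap_{k\in\mathbb{Z}^d\cap\{v_U(x)\}^\perp}\{k\}^\perp$ collapses to all of $\mathbb{T}^d$ for almost every $x$, so each invariant $A$ satisfies $A_{x\cdot}\overset{\lambda}{=}\emptyset$ or $\mathbb{T}^d$; equivalently every invariant function is $(\mu_\pi)_{\vert U}\otimes\lambda$-a.e. a function of $x$ alone, i.e. $\pi^{-1}(\mathcal{B}(M))$-measurable.

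Finally I would globalise. Because $M$ is Lindel\"of, $\mathcal{A}$ admits a countable subcover, and a subordinate partition of unity together with the disintegration of $\mu$ over $\mu_\pi$ lets me write any $\mathbb{L}^2_\mu$ function as a sum of pieces supported in single charts, the analysis remaining fibre-local over $M$ on chart overlaps. The uniform-in-$t$ bilinear bound then reduces Keplerian shear to the single-chart case already treated, while the local identifications of invariant sets patch into the global equality $\mathcal{I}=\{B\in\mathcal{B}(\Omega):\exists A\in\mathcal{B}(M),\ \mu(B\Delta\pi^{-1}(A))=0\}$. The inclusion $\pi^{-1}(\mathcal{B}(M))\subseteq\mathcal{I}$ is immediate from $\pi\circ g_t=\pi$, and the reverse inclusion is exactly the content of the local analysis above.
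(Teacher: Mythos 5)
Your proposal is correct, and it rests on the same two pillars as the paper's treatment: (i) under hypotheses (1)--(3) the push-forward of the base measure by $\phi_\xi=\left<\xi\vert v_U(\cdot)\right>$ is absolutely continuous (submersion normal form / coarea off the critical set, which hypothesis (3) makes negligible), so Riemann--Lebesgue kills every non-zero fibre mode; and (ii) the identification of $\mathcal{I}$ follows from the orthogonality characterization of invariant functions (Proposition \ref{orthofour}) once one knows that $\mathbb{Z}^d\cap\{v_U(x)\}^\perp=\{0\}$ for almost every $x$. (There is no circularity in your use of that identification for the $\xi=0$ mode: as the paper remarks, Proposition \ref{orthofour} in the continuous case does not use Keplerian shear.) Where you genuinely differ is in how (i) is converted into decay of correlations for arbitrary $f_1,f_2\in\mathbb{L}^2_\mu(\Omega)$. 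The paper never argues with general observables: it deduces Theorem \ref{thomcis} from Theorem \ref{mtcont} (Keplerian shear holds if and only if each $\left(m_{\xi,U}\right)_{\vert\mathbb{R}^*}$ is Rajchman), whose ``if'' direction is proved by checking decay on the total family $Y$ of functions $(a\circ\pi)\,e^{2i\pi\left<\xi\vert\pi_2\circ\psi_{U_j}\right>}$ with $a\in\mathbb{L}^\infty$, the weak-$*$ Lemma \ref{convfaible} absorbing the densities $a$. You instead push forward the complex measures $\overline{\widehat{(f_1)}_\xi}\,\widehat{(f_2)}_\xi\,(\mu_\pi)_{\vert U}$ and exchange $\sum_\xi$ with $\lim_t$ by dominated convergence using the Cauchy--Schwarz/Parseval bound. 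Your route is self-contained (no density argument, no weak-$*$ lemma) but proves only the implication needed here; the paper's factorization costs the extra apparatus of Theorem \ref{mtcont} but yields the equivalence, which is the article's main point.

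One repair is needed in your globalization step. With a smooth partition of unity the correlation splits into a double sum over pairs of charts, and your uniform-in-$t$ dominant becomes a product of $\ell^1$ sums of $\mathbb{L}^2$ norms, which need not converge when the cover is countably infinite. Replace the partition of unity by the measurable disjointification of the countable subcover (the paper's ``countable partition of $M$ modulo $\mu_\pi$''): each $\pi^{-1}(V_i)$ is flow-invariant (Lemma \ref{invchart}), so the off-diagonal terms vanish identically, and Cauchy--Schwarz gives the summable dominant $\sum_i\Vert f_1\mathds{1}_{\pi^{-1}(V_i)}\Vert_2\,\Vert f_2\mathds{1}_{\pi^{-1}(V_i)}\Vert_2\le\Vert f_1\Vert_2\,\Vert f_2\Vert_2$, after which your single-chart analysis applies term by term.
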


\begin{figure}
\begin{tikzpicture}
\draw[gray, thick] (-3,-3) -- (3,-3);
\draw[gray, thick] (-3,-2) -- (3,-2);
\draw[gray, thick] (-3,-1) -- (3,-1);
\draw[gray, thick] (-3,0) -- (3,0);
\draw[gray, thick] (-3,1) -- (3,1);
\draw[gray, thick] (-3,2) -- (3,2);
\draw[gray, thick] (-3,3) -- (3,3);
\draw[gray, thick] (-3,-3) -- (-3,3);
\draw[gray, thick] (-2,-3) -- (-2,3);
\draw[gray, thick] (-1,-3) -- (-1,3);
\draw[gray, thick] (0,-3) -- (0,3);
\draw[gray, thick] (1,-3) -- (1,3);
\draw[gray, thick] (2,-3) -- (2,3);
\draw[gray, thick] (3,-3) -- (3,3);
\draw[->](0,0) -- (3,1) node at (1.5,0.75)[anchor=-180, text = black]{$\xi$};
\draw [black,very thick] (-1,2) .. controls (-3,1) and (-1,0) .. (0,-2) node at (-3.1,1.4) [anchor=-180, text = black]{$v_U(U)$};
\draw[green,thick](0,0) -- (-3,-1);
\draw[blue,thick](0,-1) -- (-1,2);
\draw[blue,thick](-0.85,-2.1) -- (-1.85,0.9);
\draw[red,thick](-1.395,-0.465)-- (-0.3,-0.1) node at (-1.8,0.25) [anchor=180, text = red]{$\langle\xi\vert v_U(U)\rangle$};
\draw (-7,0) ellipse (2 and 1) node[anchor = east, text = black]{M};
\filldraw[gray] (-6,0) circle (0.5) node[anchor=center, text = white]{U};
\draw [->] (-6,0.5) .. controls (-3,3) .. (-1,2) node at (-2.5,2.5) [anchor=south, text = black]{$v_U$};
\draw node at (-1.1,-0.7) [anchor=180, text = red]{$m_{\xi,U}$};

\end{tikzpicture}
\caption{Push forward of measure  ${((\mu)_{\pi})}_{\vert U}$ on $\mathbb{R}$ by $\langle \xi\vert v_U\rangle$}
\label{mesimage}
\end{figure}

The proof of this theorem  uses Fourier transform, and relies on methods of differential geometry; In particular, the normal form of submersions.
We can see that the push forward measures $m_{\xi,U} :=\left(\left(\mu_{\pi}\right)_{\vert U}\right)_{\langle\xi\vert v_U(\cdot)\rangle}$ (see Figure~\ref{mesimage}) are absolutely continuous and verify {Riemann-Lebesgue} lemma.
This last property, in other words, {Rajchman} property will allow lonely to get Keplerian shear, it is even an equivalence.
We have successfully abstract this property and obtained the Keplerian shear under the {Rajchman} property.
Using tools from measure theory (conditional expectation)  allow to some extent to get rid of the regularity of the velocity.
The next theorem is the main result in continuous case, and refers to the real Rajchman property.
\begin{theorem}[The main result]\label{mtcont}
Let $(\Omega, \mu, (g_t)_{t\in\mathbb{R}})$ be a compatible flow with a compatible measure on a tori bundle as in Subsection~\ref{toribundle}.

The dynamical system exhibit Keplerian shear if and only if for all charts $U\in\mathcal{A}$, for all non zero $\xi\in \mathbb{Z}^d$, the push forward measures $\left(m_{\xi,U}\right)_{\vert \mathbb{R}^*}$ are Rajchman.
\end{theorem}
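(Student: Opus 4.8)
The plan is to mirror the discrete main theorem, replacing Fourier series on $\mathbb{T}$ by the Fourier transform on $\mathbb{R}$, and to use keplerian shear in its defining form $f\circ g_t\rightharpoonup\mathbb{E}_\mu(f\vert\mathcal{I})$. Throughout I work inside one chart $\pi^{-1}(U)$, where the compatible measure identifies $\mu_{\vert\pi^{-1}(U)}$ with $(\mu_\pi)_{\vert U}\otimes\lambda$ on $U\times\mathbb{T}^d$ and the compatible flow reads $g_t^U(x,y)=(x,y+tv_U(x))$. Expanding a function in its Fourier series in the angle variable $y$ turns the action of $g_t$ into multiplication of the $k$-th coefficient by $e^{2i\pi t\left<k\vert v_U(x)\right>}$, which is exactly where the measures $m_{\xi,U}=\left((\mu_\pi)_{\vert U}\right)_{\left<\xi\vert v_U(\cdot)\right>}$ enter.

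For the direct implication, fix $\xi\ne 0$ and $U$, and take $f(z)=\mathds{1}_{\{\left<\xi\vert v_U\right>\ne 0\}}(\pi(z))\,e^{2i\pi\left<\xi\vert\pi_2\circ\psi_U(z)\right>}$ (as in the discrete proof, but with the indicator cutting out the fibre where $\left<\xi\vert v_U\right>=0$). A direct computation using the form of $g_t^U$ and the orthogonality of the torus characters gives
\[
\int_{\pi^{-1}(U)}\overline{f}\,(f\circ g_t)\,d\mu=\int_{\{\left<\xi\vert v_U\right>\ne 0\}}e^{2i\pi t\left<\xi\vert v_U(x)\right>}\,d(\mu_\pi)_{\vert U}(x)=\widehat{(m_{\xi,U})_{\vert\mathbb{R}^*}}(t).
\]
The indicator guarantees $\xi\notin\{v_U(x)\}^\perp$ on the support of $f$, so by the characterisation of invariant functions $f$ is orthogonal to every $\mathcal{I}$-measurable function, i.e. $\mathbb{E}_\mu(f\vert\mathcal{I})=0$. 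Keplerian shear then forces the left-hand side to converge to $\int\overline{f}\,\mathbb{E}_\mu(f\vert\mathcal{I})\,d\mu=0$, whence $\widehat{(m_{\xi,U})_{\vert\mathbb{R}^*}}(t)\to 0$ as $t\to+\infty$; conjugate symmetry of the Fourier transform of a positive measure gives the limit as $t\to-\infty$ as well, so $(m_{\xi,U})_{\vert\mathbb{R}^*}$ is Rajchman.

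For the reciprocal implication I establish the weak convergence $f\circ g_t\rightharpoonup\mathbb{E}_\mu(f\vert\mathcal{I})$. Since $M$ is Lindelöf there is a countable cover by charts; a measurable partition subordinate to it together with an $\varepsilon/2^i$ argument reduces everything to $f,\varphi\in\mathbb{L}^2_\mu(\Omega)$ supported in a single $\pi^{-1}(U)$. Expanding $f(\psi_U^{-1}(x,y))=\sum_k a_k(x)e^{2i\pi\left<k\vert y\right>}$ and $\varphi$ likewise with coefficients $b_k$, the orthogonality of characters yields
\[
\int\overline{\varphi}\,(f\circ g_t)\,d\mu=\sum_{k\in\mathbb{Z}^d}\int_U\overline{b_k(x)}\,a_k(x)\,e^{2i\pi t\left<k\vert v_U(x)\right>}\,d(\mu_\pi)_{\vert U}(x).
\]
For each $k$ I split $U$ along $\{\left<k\vert v_U\right>=0\}$ and $\{\left<k\vert v_U\right>\ne 0\}$: on the first set the integrand is time independent and the terms assemble into $\int\overline{\varphi}\,\mathbb{E}_\mu(f\vert\mathcal{I})\,d\mu$, while on the second I show the oscillatory integral tends to $0$. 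Disintegrating, the pushforward of $\overline{b_k}a_k\,d(\mu_\pi)_{\vert U}$ restricted to $\{\left<k\vert v_U\right>\ne 0\}$ by $x\mapsto\left<k\vert v_U(x)\right>$ is a finite complex measure absolutely continuous with respect to $(m_{k,U})_{\vert\mathbb{R}^*}$, with density $g_k\in\mathbb{L}^1\big((m_{k,U})_{\vert\mathbb{R}^*}\big)$, so that integral equals $\int_{\mathbb{R}^*}g_k(s)e^{2i\pi ts}\,d(m_{k,U})_{\vert\mathbb{R}^*}(s)$. The Rajchman hypothesis and Lemma~\ref{convfaible} (weak-$*$ convergence of the exponentials) make each such term converge to $0$. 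Finally Cauchy--Schwarz bounds the $k$-th summand by $\|a_k\|_2\|b_k\|_2$, which is summable, so dominated convergence lets me exchange the sum with the limit $t\to+\infty$, yielding $f\circ g_t\rightharpoonup\mathbb{E}_\mu(f\vert\mathcal{I})$.

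The main obstacle is the reciprocal direction, precisely the disintegration step: one must rewrite the chartwise oscillatory integral as a genuine Fourier transform of an $\mathbb{L}^1$-density against $(m_{k,U})_{\vert\mathbb{R}^*}$ so that the weak-$*$ form of the Rajchman property applies. This is exactly where the restriction to $\mathbb{R}^*$ is indispensable, since it discards the invariant mass sitting over $0$, and where the uniform-in-$t$ domination needed to interchange the infinite Fourier sum with the limit must be secured.
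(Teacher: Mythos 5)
Your proposal is correct and takes essentially the same route as the paper: the identical test function $\mathds{1}_{\left<\xi\vert v_U\right>\ne 0}(\pi(\cdot))\,e^{2i\pi\left<\xi\vert \pi_2\circ\psi_U(\cdot)\right>}$ for the direct implication, and for the converse the same chart-by-chart reduction and angular Fourier expansion, with the Rajchman hypothesis entering through Lemma~\ref{convfaible} applied to an $\mathbb{L}^1$ density of the pushed-forward measure against $\left(m_{\xi,U}\right)_{\vert \mathbb{R}^*}$. The only cosmetic differences are that the paper gets $\mathbb{E}_\mu(f_1\vert\mathcal{I})=0$ via the Birkhoff ergodic theorem where you invoke orthogonality to invariant functions, and it concludes by totality of the family of products $(a\circ\pi)\,e^{2i\pi\left<\xi\vert \pi_2\circ\psi_U(\cdot)\right>}$ with $a\in\mathbb{L}^\infty$ where you sum the full Fourier series explicitly using Cauchy--Schwarz and dominated convergence.
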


\begin{proof}
Let us prove the direct implication.
Let $U\in \mathcal{A}$ and non zero $\xi\in \mathbb{Z}^d$.
Take \[f_1 : z\in \pi^{-1}(U)\mapsto \mathds{1}_{\langle\xi\vert v_U\rangle\ne 0}(\pi(z))e^{2i\pi \langle\xi\vert \pi_2\circ \psi_U(z)\rangle}.\]
We have thanks to the compatibility of $\mu$
\begin{equation*}
\int_\Omega\overline{f_1}(z)\cdot\left(f_1\circ g_t(z)\right)d\mu(z)=\int_{\mathbb{R}}\mathds{1}_{\mathbb{R}^*}(s)e^{2i\pi ts}dm_{\xi,U}(s)
=
\int_{\mathbb{R}}e^{2i\pi ts}d\left(m_{\xi,U}\right)_{\vert \mathbb{R}^*}(s)
\end{equation*}
by push forward theorem.
Moreover, with Keplerian shear,
\[\int_\Omega\overline{f_1}(z)\cdot\left(f_1\circ g_t(z)\right)d\mu(z)\xrightarrow[t\to+\infty]{}\int_{\Omega}\overline{\mathbb{E}_\mu(f_1\vert\mathcal{I})}\mathbb{E}_\mu(f_1\vert \mathcal{I})d\mu=0\]
because by ergodic {Birkhoff} theorem \begin{align}\label{birkth}\mathbb{E}_\mu(f_1\vert \mathcal{I})\left(\psi_U^{-1}(x,y)\right)&\overset{\left(\mu_\pi\right)_{\vert U}\otimes \lambda-a.s}{=}e^{2i\pi\langle\xi\vert y\rangle}\mathds{1}_{\langle\xi\vert v_U\rangle\ne 0}(x)\underset{T\to +\infty}{\lim}\frac{1}{T}\integrale{0}{T}{e^{2i\pi t\langle\xi\vert v_U(x)\rangle}}{t}=0.\end{align}
So  $\left(m_{\xi,U}\right)_{\vert\mathbb{R}^*}$ is {Rajchman}.

Let us prove the reciprocal implication.
Let $(U_i)_{i\in I}\in \mathcal{A}^I$ be a countable partition of~$M$ modulo $\mu_{\pi}$.
Let  \[Y :=\union{(j,\xi)\in I\times \prive{\mathbb{Z}^d}{\left\{0\right\}}}{\left\{\left(a\circ\pi\right)\cdot e^{2i\pi\langle\xi\vert \pi_2\circ\psi_{U_j}\rangle}\in L^2_\mu\left(\reciproque{\pi}(U_j)\right) : a\in \mathbb{L}^\infty_\mu(U_j)\right\}}.\]
Let $(i,j)\in I^2$,$\left(a_1,a_2\right)\in \mathbb{L}^\infty_\mu(U_i)\times \mathbb{L}^\infty_\mu(U_j)$ and $\left(\xi_1,\xi_2\right)\in \left(\mathbb{Z}^d\right)^2.$
Take \[f_l = (a_l\circ \pi)\cdot e^{2i\pi\langle\xi\vert \pi_2\circ\psi_{U_j}\rangle}\] for $l\in\{1,2\}$.
When $i\ne j$, the supports are disjoint, and in this case \[\mathbb{E}_\mu\left(Cov_t\left(f_1,f_2\vert \mathcal{I}\right)\right)=0.\]
The same happens when $\xi_1\ne \xi_2$ by periodicity of complex exponentials.
Finally, when $\xi_1=\xi_2=0$, $\mathbb{E}_\mu\left(Cov_t\left(f_1,f_2\vert \mathcal{I}\right)\right)$ is constant.

Suppose without loss of generality that $i=j$ and~$\xi_1=\xi_2=\xi\ne 0.$
Set $W^{\xi}_i=U_i\setminus{\left(\langle\xi\vert v_{U_i}\rangle=0\right)}$.
We have
\[
\intInd{\Omega}{\overline{f_1}\cdot(f_2\circ g_t)\cdot\mathds{1}_{\langle\xi\vert v_{U_i}\rangle\ne 0}\circ \pi}{\mu}=\intInd{\mathbb{R}}{f(s)e^{2i\pi ts}}{(m_{\xi,U_i})_{\vert \mathbb{R}^*}(s)}
\xrightarrow[t\to\pm\infty]{}0
\]
because $(m_{\xi,U_i})_{\vert \mathbb{R}^*}$ is {Rajchman} thus converges weakly-$*$ according to  Lemma~\ref{convfaible}.

Moreover \[\intInd{\Omega}{\overline{f_1}\cdot(f_2\circ g_t)\cdot\mathds{1}_{\langle\xi\vert v_{U_i}\rangle= 0}\circ \pi}{\mu}=\intInd{\Omega}{ \overline{f_1}\cdot f_2\cdot\mathds{1}_{\langle\xi\vert v_{U_i}\rangle= 0}\circ \pi}{\mu}.\]

By Proposition 2.4 in \cite{DaTho}, we have that \[f_2\cdot\mathds{1}_{\langle\xi\vert v_{U_i}\rangle= 0}\circ \pi=\mathbb{E}_\mu\left(f_2\vert \mathcal{I}\right).\]
We obtains by totality of $Y$ that for all $f_1,f_2\in \mathbb{L}^2_\mu(\Omega)$
\[\mathbb{E}\left(Cov_t\left(f_1,f_2\vert \mathcal{I}\right)\right)\xrightarrow[t\to\pm \infty]{}0.
\]
\end{proof}

\begin{remark}
We can also note that the result generalises to infinite dimensional tori bundles $\mathbb{T}^\mathbb{N}$ with product topology.
\end{remark}

\begin{remark}
When Keplerian shear property holds, the decomposition of {Radon-Nikodym-Lebesgue} of the image measures $m_{\xi,U}$ in Theorem \ref{mtcont} reads as $\left(m_{\xi,U}\right)_{ac}+\left(m_{\xi,U}\right)_{sc}+\left(m_{\xi,U}\right)_{d}$ with $\left(m_{\xi,U}\right)_{d}=\alpha \delta_0 $, and $\alpha\geq 0$.
Recall that the absolutely continuous part satisfies the {Rajchman} property.
The discrete part is concentrated on $0$ because otherwise, a non-trivial periodicity would break the Keplerian shear.
The behavior of the singular part $\nu$ is not obvious, since it may be {Rajchman} or not, see subsection~\ref{examRajch}.
\end{remark}

\begin{remark}
When all the $m_{\xi,U}$'s are of the form $\left(m_{\xi,U}\right)_{ac}+\alpha_\xi\delta_0$, Theorem~\ref{mtcont} immediately gives Keplerian shear.
\end{remark}

\subsection{Invariant functions and $\sigma-$algebra}

Under the conditions of Theorem \ref{thomcis}, in the regular case, the invariant $\sigma-$algebra is just $\pi^{-1}(\mathcal{B}(M))$ modulo zero measure set.
On the functional aspect, it amounts to say that a measurable function $f$ is invariant by the flow if and only if it is $\mu-$a.s independent on the second variable on every chart $U$.
However, we will first highlight the invariant functions and then the $\sigma$-algebra of invariant sets in order to compare with the regular case mentioned above.
In the following theorem, we will use {Fourier} series to identify a characterization of invariance by an orthogonality property.

\begin{proposition}[Orthogonality and {Fourier} characterization of invariant functions]
Under the conditions of Theorem~\ref{mtcont}, a function 
$f\in \mathbb{L}^2_\mu(\Omega)$ is invariant according to $(g_t)_{t\in\mathbb{R}}$ if and only if for all $U\in\mathcal{A}, \exists (a_k)_{k\in\mathbb{Z}^d}\in \mathbb{L}^2_{\left(\mu_{\pi}\right)_{\vert U}}(U)^{\mathbb{Z}^d},\left(\mu_{\pi}\right)_{\vert U}\otimes\lambda-a.e \,(x,y)\in U\times\mathbb{T}^d$,
\[\restreint{f}{\pi^{-1}(U)}(\psi_U^{-1}(x,y))=\sum_{k\in\mathbb{Z}^d\cap\left\{v_U(x)\right\}^\perp}a_k(x)e^{2i\pi\langle k\vert y\rangle}.
\]
In other words, for a chart $U\in \mathcal{A}$ and for $x\in U$ fixed, non zero {Fourier} coefficients have indices orthogonal with the vector $v_U(x)$.
\label{orthofour}
\end{proposition}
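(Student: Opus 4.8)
The plan is to adapt the argument of the discrete-case theorem, working in the local coordinates $\psi_U$ and expanding $f$ in its Fourier series in the fibre variable $y\in\mathbb{T}^d$. For a fixed chart $U\in\mathcal{A}$ and $x\in U$, write $h_x(y):=\restreint{f}{\pi^{-1}(U)}(\psi_U^{-1}(x,y))$ and let $c_k(x):=\widehat{h_x}(k)$ be its Fourier coefficients. Since $\mu$ is compatible, $\mu$ pushes forward under $\psi_U$ to $\left(\mu_{\pi}\right)_{\vert U}\otimes\lambda$, so Parseval together with Fubini shows that $\sum_{k\in\mathbb{Z}^d}\int_U\abso{c_k(x)}^2\,d\left(\mu_{\pi}\right)_{\vert U}(x)$ equals the squared $L^2_\mu$-norm of $\restreint{f}{\pi^{-1}(U)}$, hence is finite; in particular each $c_k$ lies in $\mathbb{L}^2_{\left(\mu_{\pi}\right)_{\vert U}}(U)$ and these are the candidate coefficients $a_k$.

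For the direct implication, assume $f\circ g_t=f$ $\mu$-a.s.\ for every $t$. Using the compatibility formula $g^U_t(x,y)=(x,y+tv_U(x))$ and the joint measurability of $(t,z)\mapsto f(g_t(z))$, a Tonelli argument converts the family of a.s.\ identities into the statement that, for $\left(\mu_{\pi}\right)_{\vert U}$-a.e.\ $x$, one has $h_x(\cdot+tv_U(x))=h_x$ in $L^2(\mathbb{T}^d)$ for Lebesgue-a.e.\ $t$. Comparing Fourier coefficients gives $c_k(x)\bigl(e^{2i\pi t\langle k\vert v_U(x)\rangle}-1\bigr)=0$ for a.e.\ $t$ and every $k$. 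Here lies the only genuine difference with the discrete setting: whenever $c_k(x)\neq0$ we get $e^{2i\pi t\langle k\vert v_U(x)\rangle}=1$ for a.e.\ $t\in\mathbb{R}$, and since $t\mapsto e^{2i\pi t\langle k\vert v_U(x)\rangle}$ is continuous this forces the identity for \emph{all} $t$, whence $\langle k\vert v_U(x)\rangle=0$, i.e.\ $k\in\{v_U(x)\}^\perp$. Thus no appeal to the Rajchman/Keplerian-shear property is needed for this direction: continuity in the real parameter $t$ replaces the Fourier-decay argument that was required in the discrete theorem to exclude the values $\langle k\vert v_U(x)\rangle\in\mathbb{Z}^*$.

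For the reciprocal implication, assume the Fourier series of $h_x$ is supported in $\mathbb{Z}^d\cap\{v_U(x)\}^\perp$ for a.e.\ $x$. Since $\pi\circ g_t=\pi$, the flow maps $\pi^{-1}(U)$ into itself, so for $z=\psi_U^{-1}(x,y)$ one computes directly
\[
f(g_t(z))=\sum_{k\in\mathbb{Z}^d\cap\{v_U(x)\}^\perp}a_k(x)\,e^{2i\pi\langle k\vert y+tv_U(x)\rangle}=\sum_{k\in\mathbb{Z}^d\cap\{v_U(x)\}^\perp}a_k(x)\,e^{2i\pi\langle k\vert y\rangle}=f(z),
\]
using $\langle k\vert v_U(x)\rangle=0$ for every index in the sum so that $e^{2i\pi t\langle k\vert v_U(x)\rangle}=1$. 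Since the charts cover $M$, this chart-wise identity yields $f\circ g_t=f$ on all of $\Omega$.

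The main obstacle is purely the bookkeeping of quantifiers in the direct implication: one must justify the Fubini step that turns ``$f\circ g_t=f$ a.s.\ for each fixed $t$'' into ``for a.e.\ $x$, $h_x$ is $tv_U(x)$-invariant for a.e.\ $t$'', which rests on joint measurability of the flow and on the $L^2$ integrability coming from compatibility of $\mu$. Once this is secured, the upgrade from a.e.-$t$ to all-$t$ by continuity, and the verification that the selected $a_k$ indeed belong to $\mathbb{L}^2_{\left(\mu_{\pi}\right)_{\vert U}}(U)$, are routine.
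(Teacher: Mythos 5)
Your proof is correct and follows essentially the same route as the paper's: Fourier expansion of $f$ in the fibre variable on each chart, uniqueness of Fourier coefficients to get $c_k(x)\bigl(e^{2i\pi t\langle k\vert v_U(x)\rangle}-1\bigr)=0$, the observation that this holding in $t$ forces $\langle k\vert v_U(x)\rangle=0$, and the identical direct computation for the converse. The only differences are presentational: you make explicit the Fubini/Tonelli handling of the $t$-dependent null sets and the $\mathbb{L}^2_{\left(\mu_{\pi}\right)_{\vert U}}$ membership of the coefficients, which the paper's pointwise write-up glosses over, and you note inline (as the paper does in its subsequent remark) that neither the Rajchman hypothesis nor Keplerian shear is needed for this proposition.
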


\begin{proof}
Let $f\in \mathbb{L}^2_\mu(\Omega)$.
Let us start with the direct implication.
Suppose that for all $t\in \mathbb{R}$, \[f\circ g_t=f\,\mu-a.e.\]
Let $U\in\mathcal{A}$, $t\in\mathbb{R}$ and $(x,y)\in U\times \mathbb{T}^d$.
Since \[f(g_t(\psi_U^{-1}(x,y)))=f(\psi_U^{-1}(x,y))\]
we have \[f(\psi_U^{-1}(x,y+tv_U(x)))=f(\psi_U^{-1}(x,y)).\]
By  {Fourier} serie decomposition, this gives \[\sum_{k\in\mathbb{Z}^d}\widehat{f\circ \psi_U(x,\cdot)}(k)e^{2i\pi\langle k\vert y+tv_U(x)\rangle}=\sum_{k\in\mathbb{Z}^d}\widehat{f\circ \psi_U(x,\cdot)}(k)e^{2i\pi\langle k\vert y\rangle}.\]
Hence, by uniqueness of {Fourier} serie coefficients, for all  $k\in\mathbb{Z}^d$,
\[\widehat{f\circ \psi_U(x,\cdot)}(k)\left(e^{2i\pi t\langle k\vert v_U(x)\rangle}-1\right)=0.\]
When $k$ is such that $\widehat{f\circ \psi_U(x,\cdot)}(k)\ne 0$ we get that for all  $t\in \mathbb{R}, t\langle k\vert v_U(x)\rangle\in \mathbb{Z}$. 
So  $\langle k\vert v_U(x)\rangle=0$, and then $k\in \left\{v_U(x)\right\}^\perp$.
Hence  \[\left(\mu_{\pi}\right)_{\vert U}\otimes\lambda-a.e \,(x,y)\in U\times\mathbb{T}^d,f(\psi_U^{-1}(x,y))=\sum_{k\in\mathbb{Z}^d\cap\left\{v_U(x)\right\}^\perp}\widehat{f\circ \psi_U(x,\cdot)}(k)e^{2i\pi\langle k\vert y\rangle}.\]

Let see the reciprocal implication. Let $t\in \mathbb{R}$.
Suppose that $f$ satisfies for all charts $U\in \mathcal{A}$ \[\left(\mu_{\pi}\right)_{\vert U}\otimes\lambda-a.e \,(x,y)\in U\times\mathbb{T}^d,f(\psi_U^{-1}(x,y))=\sum_{k\in\mathbb{Z}^d\cap\left\{v_U(x)\right\}^\perp}a_k(x)e^{2i\pi\langle k\vert y\rangle}.\]
Let $z\in \Omega$ and take $ U\in\mathcal{A}$ such that $z\in \pi^{-1}(U)$.
We have  
\[
f(g_t(z))=f(g_t(\psi_U^{-1}(\psi_U(z))))=f(\psi_U^{-1}(\pi(z),\pi_2\circ \psi_U(z)+tv_U(\pi(z)))).\]
Thus  
\[
\begin{split}
f(g_t(z))&=\sum_{k\in\mathbb{Z}^d\cap\left\{v_U(\pi(z))\right\}^\perp}a_k(\pi(z))e^{2i\pi\langle k\vert \pi_2\circ\psi_U(z)+tv_U(\pi(z))\rangle} \\
&=\sum_{k\in\mathbb{Z}^d\cap\left\{v_U(\pi(z))\right\}^\perp}a_k(\pi(z))e^{2i\pi\langle k\vert \pi_2\circ\psi_U(z)\rangle}.
\end{split}
\]
by orthogonality. Hence \[f(g_t(z))=f(\psi_U^{-1}(\psi_U(z)))=f(z).\]
\end{proof}

\begin{remark} As a byproduct, the proposition gives an explicit form to the conditional expectation, with respect to the invariant $\sigma$-algebra, of a function $f\in \mathbb{L}^2_\mu(\Omega)$ locally by the next formula for $U\in\mathcal{A}$
\[ \left(\mu_{\pi}\right)_{\vert U}\otimes \lambda-a.e\, (x,y)\in U\times \mathbb{T}^d,\left(\mathbb{E}_\mu(f\vert \mathcal{I})\circ \psi_U^{-1}\right)(x,y) =\sum_{k\in \{v_U(x)\}^\perp\cap\mathbb{Z}^d}\widehat{(f\circ \psi_U^{-1})(x,\cdot)}(k)e^{2i\pi\langle k\vert y\rangle}.\]
\end{remark}

\begin{proposition}
The invariant $\sigma$-algebra is
\[\mathcal{I} =\left\{\union{U\in\mathcal{A}}{B_U} \in\mathcal{B}(\Omega) : (B_U)_{U\in\mathcal{A}}\in \prodend{U\in\mathcal{A}}{\psi_U^{-1}\left(\mathcal{I}_U\right)}\right\},
\]
where for $U\in\mathcal{A}$, the local invariant $\sigma-$algebra on $U$ is
\begin{equation}
\label{invartrib}
\mathcal{I}_U :=\left\{C\in \mathcal{B}(U\times \mathbb{T}^d) : \mu_{{\pi}_{\vert U}}-a.e\, x\in U,\widehat{\mathds{1}_C(x,\cdot)}^{-1}(\mathbb{C}^*)\subset \{v_U(x)\}^\perp\right\}.
\end{equation}
\end{proposition}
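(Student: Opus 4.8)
The plan is to prove the equality by a double inclusion, after reducing to a countable family of charts. Since $M$ is Lindelöf, the open cover $(\pi^{-1}(U))_{U\in\mathcal{A}}$ of $\Omega$ admits a countable subcover, and I would work throughout with such a countable subatlas; this is harmless because both operations appearing on the right-hand side, namely choosing $(B_U)_{U}$ in each $\psi_U^{-1}(\mathcal{I}_U)$ and forming $\bigcup_U B_U$, then only involve countably many indices. I would also first record that each $\mathcal{I}_U$ really is a sub-$\sigma$-algebra of $\mathcal{B}(U\times\mathbb{T}^d)$: by Lemma~\ref{leminvar} the Fourier-support condition of \eqref{invartrib} coincides with the orthogonal stability \eqref{invcar}, which is plainly preserved under complementation and countable fibrewise unions.

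For the inclusion of $\mathcal{I}$ in the right-hand side, I would take $B\in\mathcal{I}$ and localize it. By Lemma~\ref{invchart} each $\pi^{-1}(U)$ is $(g_t)_{t\in\mathbb{R}}$-invariant, so $B_U:=B\cap\pi^{-1}(U)$ is invariant as an intersection of invariant sets, and $B=\bigcup_U B_U$ because the $\pi^{-1}(U)$ cover $\Omega$. Putting $C_U:=\psi_U(B_U)$, the indicator $\mathds{1}_{B_U}$ is an invariant element of $\mathbb{L}^2_\mu(\pi^{-1}(U))$—the integrability coming from the compatibility $(\mu_\pi)_{\vert U}\otimes\lambda$ and the finite $\lambda$-mass of $\mathbb{T}^d$—so Proposition~\ref{orthofour} applies. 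Its conclusion on the chart $U$ states exactly that, for $(\mu_\pi)_{\vert U}$-a.a. $x$, the non-zero Fourier coefficients of $y\mapsto\mathds{1}_{C_U}(x,y)$ are indexed in $\{v_U(x)\}^\perp$, i.e. $\widehat{\mathds{1}_{C_U}(x,\cdot)}^{-1}(\mathbb{C}^*)\subset\{v_U(x)\}^\perp$. This is the defining condition \eqref{invartrib}, so $C_U\in\mathcal{I}_U$, $B_U\in\psi_U^{-1}(\mathcal{I}_U)$, and $B$ belongs to the right-hand side.

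For the reverse inclusion I would take $B=\bigcup_U B_U$ with $B_U=\psi_U^{-1}(C_U)$ and $C_U\in\mathcal{I}_U$. Membership $C_U\in\mathcal{I}_U$ is precisely the hypothesis of the reciprocal implication of Proposition~\ref{orthofour} for $\mathds{1}_{C_U}=\mathds{1}_{B_U}\circ\psi_U^{-1}$; applying it shows each $\mathds{1}_{B_U}$ is flow-invariant, hence $B_U\in\mathcal{I}$. Since the union is countable and $\mathcal{I}$ is a $\sigma$-algebra, $B=\bigcup_U B_U\in\mathcal{I}$.

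The step I expect to be the main obstacle is not either implication taken one chart at a time, but the gluing. One must check that restricting to a countable subatlas does not change $\bigcup_U B_U$ modulo $\mu$, and that the local representatives $C_U$ are mutually consistent over the overlaps $\pi^{-1}(U\cap U')$. The forward direction produces a canonically coherent family, since $B_U=B\cap\pi^{-1}(U)$ satisfies $B_U\cap\pi^{-1}(U')=B_{U'}\cap\pi^{-1}(U)$; the delicate point is the converse, where one argues that a Borel set written as such a union is already determined modulo $\mu$ by countably many of its pieces, so that the $\sigma$-algebra structure of $\mathcal{I}$ can legitimately be invoked. This is exactly the place where the Lindelöf hypothesis on $M$ and the chart-independent nature of $(g_t)_{t\in\mathbb{R}}$-invariance enter.
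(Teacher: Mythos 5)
Your proof is correct and follows essentially the same route as the paper's: both directions localize to charts and apply Proposition~\ref{orthofour} to the indicator functions, for the direct inclusion via $C_U=\psi_U(A\cap\pi^{-1}(U))$ and for the reverse by showing each $B_U$ is invariant and taking the union. The only differences are presentational: you make explicit the Lindelöf/countability reduction, the citation of Lemma~\ref{invchart}, and the $\sigma$-algebra structure of $\mathcal{I}$, all of which the paper's proof uses implicitly, while your closing worry about consistency of the $C_U$ over chart overlaps is unnecessary, since invariance of each $B_U$ separately plus countability of the union already suffices.
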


\begin{proof}
We prove the direct inclusion.
Let $A\in\mathcal{I}$. Let $U\in\mathcal{A}$ and set  $C=\psi_U(A\cap\pi^{-1}(U))$.
By Proposition \ref{orthofour} \[\mathds{1}_C(x,y)=\sum_{k\in \{v_U(x)\}^\perp\cap\mathbb{Z}^d}a_k(x)e^{2i\pi\langle k\vert y\rangle}\,\left(\mu_{\pi}\right)_{\vert U}\otimes \lambda-a.s\]
which shows that $C\in \mathcal{I}_U$. Hence
$A\cap \pi^{-1}(U)\in \psi_U^{-1}(\mathcal{I}_U)$, and finally \[A=\union{U\in\mathcal{A}}{A\cap\pi^{-1}(U)}.\]

Now, we prove the reciprocal inclusion.

Let $A\in \mathcal{B}(\Omega)$ s.t $\exists (B_U)_{U\in \mathcal{A}}\in \prodend{U\in\mathcal{A}}{\psi_U^{-1}\left(\mathcal{I}_U\right)},A=\union{U\in\mathcal{A}}{B_U}$.
Then, for all $t\in\mathbb{R}$, \[g_t^{-1}(A)=\union{U\in\mathcal{A}}{g_t^{-1}(B_U)}.\]
Let $U\in\mathcal{A}$.
By bijectivity of $\psi_U$, $\psi_U(B_U)\in\mathcal{I}_U$.
Hence \[\left(\mu_{\pi}\right)_{\vert U}\otimes\lambda-a.e\,(x,y)\in U\times\mathbb{T}^d,\mathds{1}_{\psi_U(B_U)}(x,y)=\sum_{k\in \{v_U(x)\}^\perp\cap\mathbb{Z}^d}\widehat{\mathds{1}_{\psi_U(B_U)}(x,\cdot)}(k)e^{2i\pi\langle k\vert y\rangle},\]
which proves that $A\in\mathcal{I}$ by Proposition \ref{orthofour}.
\end{proof}

\begin{remark} We can see that local invariants $A\in \mathcal{I}_U$ satisfy

\begin{equation}
\label{invarloc}
\mathds{1}_A(x,y) =\sommeInd{k\in \mathbb{Z}^d\cap \{v_u(x)\}^\perp}{\widehat{\mathds{1}_A(x,\cdot)}(k)e^{2i\pi\langle\xi\vert y\rangle}}\, \left(\mu_{\pi}\right)_{\vert U}\otimes \lambda-a.s.
\end{equation}
\end{remark}
\begin{lemme}\label{invchart}
We have  $\pi^{-1}(\mathcal{B}(M))\subset\mathcal{I}$.
\end{lemme}

\begin{proof}
Let $A=\pi^{-1}(B)$ for some $B\in \mathcal{B}(M)$.

Let $U\in\mathcal{A}$ and $t\in\mathbb{R}$.
We have  \[\mu-a.e \,z\in \pi^{-1}(U),\pi(z)=\pi_1\circ \psi_U(z).\]
And  \[\mu-a.e \,z\in \pi^{-1}(U),(\psi_U\circ g_t )(z)=(\psi_U\circ g_t\circ \psi_U^{-1})(\pi_1\circ \psi_U(z),\pi_2\circ \psi_U(z)).\]
So  \[\mu-a.e\,z\in \pi^{-1}(U),\pi_1((\psi_U\circ g_t )(z))=\pi(z)\] because tori bundle property of $\Omega$.
Namely \[{\mu-a.e\,z\in \pi^{-1}(U),\pi(g_t(z))=\pi(z)}.\]

We have $A\cap \pi^{-1}(U)=\pi^{-1}(B\cap U)$,
hence \[\psi_U(A\cap\pi^{-1}(U))=(B\cap U)\times\mathbb{T}^d\in \mathcal{I}_U.\]
Therefore $A$ is $g_t$ invariant by Proposition \ref{orthofour}.
\end{proof}

Note that the preceding inclusion is generally strict as shown in the following example.

\begin{exemple}
Consider $g_t =Id_{\mathbb{T}^2}$, $\Omega=\mathbb{T}^2$, $M=\mathbb{T}$.

Clearly $\mathcal{I}=\mathcal{B}(\mathbb{T}^2)$,
while $\pi^{-1}(\mathcal{B}(\mathbb{T}))=\{A\times \mathbb{T}\in \mathcal{B}(\mathbb{T}^2) : A\in  \mathcal{B}(\mathbb{T})\}\subsetneq \mathcal{I}$.
\end{exemple}

\subsection{Convergence speed with the real Rajchman property}

We are now interested by the speed of convergence of the conditional correlations. Next result shows that even in $C^\infty$ regularity, the order of convergence is limitated by the Rajchman order.
\bigbreak
\begin{proposition}[Speed roof]
\label{roofsp}
Let $0\neq\xi\in\mathbb{Z}^d$ and $U\in \mathcal{A}$.
For all $\gamma>r\left(\left(m_{\xi,U}\right)_{\vert \mathbb{R}^*}\right)$, there exists $(f_1,f_2)\in (\mathcal{C}^\infty(\reciproque{\pi}(U))\cap \mathbb{L}^2_\mu(\Omega))^2$ such that the decay of conditional correlations is not faster than $t^{-\gamma}$, that is, there exists $C>0$ such that,
\[\limsup_{t\to+\infty}\abso{\mathbb{E}_\mu(Cov_{t}(f_1\cdot \mathds{1}_U\circ \pi,f_2\vert \mathcal{I}))}t^\gamma>C.\]
\end{proposition}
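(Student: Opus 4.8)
The plan is to reduce the statement to the mere definition of the Rajchman order, by exhibiting an explicit smooth pair whose expected conditional correlation is, up to nothing, the Fourier transform of $\nu := \left(m_{\xi,U}\right)_{\vert \mathbb{R}^*}$. Write $\alpha := m_{\xi,U}(\{0\})=\left(\mu_\pi\right)_{\vert U}\left(\left\{x\in U : \left<\xi\vert v_U(x)\right>=0\right\}\right)$, so that $\widehat{m_{\xi,U}}(t)=\widehat{\nu}(t)+\alpha$ for every $t$. Since $r(\nu)$ is by definition the supremum of the Rajchman speeds of $\nu$, the hypothesis $\gamma>r(\nu)$ forces $\gamma$ not to be a Rajchman speed; negating the defining property yields
\[
\forall C>0,\ \forall t_0>0,\ \exists t,\ \left(\abso{t}\geq t_0\ \text{and}\ \abso{\widehat{\nu}(t)}>\frac{C}{\abso{t}^\gamma}\right).
\]
As $\nu$ is a nonnegative measure, $\widehat{\nu}(-t)=\overline{\widehat{\nu}(t)}$, so these witnesses may be taken with $t>0$, matching the $t\to+\infty$ regime of the statement.

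For the test functions I would take $f_1=f_2$ equal to the fibre character
\[
f:z\in\reciproque{\pi}(U)\mapsto e^{2i\pi\left<\xi\vert \pi_2\circ\psi_U(z)\right>},
\]
which is $\mathcal{C}^\infty$ on $\reciproque{\pi}(U)$ and bounded, hence lies in $\mathbb{L}^2_\mu(\Omega)$ after extension by $0$. Exactly as in the proof of Theorem~\ref{mtcont}, the compatibility of $\mu$ together with the pushforward theorem gives
\[
\intInd{\Omega}{\overline{(f\cdot\mathds{1}_U\circ\pi)}\cdot (f\circ g_t)}{\mu}=\intInd{U}{e^{2i\pi t\left<\xi\vert v_U(x)\right>}}{\left(\mu_\pi\right)_{\vert U}(x)}=\widehat{m_{\xi,U}}(t),
\]
where I have used that $f$ is supported on $\reciproque{\pi}(U)$, which is $(g_t)$-invariant by Lemma~\ref{invchart}. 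Taking the global expectation of $Cov_t$ and using $\mathbb{E}_\mu(\mathbb{E}_\mu(\,\cdot\,\vert\mathcal{I}))=\intInd{\Omega}{\,\cdot\,}{\mu}$, the first term contributes $\widehat{m_{\xi,U}}(t)$. For the second, the Birkhoff computation~\eqref{birkth} (equivalently Proposition~\ref{orthofour}) shows that $\mathbb{E}_\mu(f\vert\mathcal{I})$ equals $e^{2i\pi\left<\xi\vert y\right>}$ on $\{\left<\xi\vert v_U\right>=0\}$ and vanishes elsewhere, so $\intInd{\Omega}{\abso{\mathbb{E}_\mu(f\vert\mathcal{I})}^2}{\mu}=\alpha$. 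Subtracting yields the clean identity
\[
\mathbb{E}_\mu\left(Cov_t\left(f\cdot\mathds{1}_U\circ\pi,f\vert\mathcal{I}\right)\right)=\widehat{m_{\xi,U}}(t)-\alpha=\widehat{\nu}(t).
\]

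Combining the two observations then closes the argument: given any $C>0$ and $T>0$, the non-speed property furnishes $t>T$ with $\abso{\widehat{\nu}(t)}>C/t^\gamma$, that is $\abso{\mathbb{E}_\mu(Cov_t(f\cdot\mathds{1}_U\circ\pi,f\vert\mathcal{I}))}>C/t^\gamma$, which is exactly the claimed slow decay. I expect the only genuinely delicate point to be the exact evaluation of the invariant part: one must verify that the $\mathds{1}_U\circ\pi$ truncation leaves the cancellation untouched (harmless here since $f$ is already supported on $\reciproque{\pi}(U)$), that the atom $\alpha$ of $m_{\xi,U}$ at $0$ coincides with the mass of the invariant locus $\{\left<\xi\vert v_U\right>=0\}$, and hence that subtracting $\abso{\mathbb{E}_\mu(f\vert\mathcal{I})}^2$ removes precisely this atom, so that what survives is $\widehat{\nu}$ rather than $\widehat{m_{\xi,U}}$. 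Everything else is the bookkeeping already carried out in the proof of Theorem~\ref{mtcont}.
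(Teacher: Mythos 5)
Your proof is correct and follows essentially the same route as the paper's: the same fibre character $f=e^{2i\pi\left<\xi\vert \pi_2\circ\psi_U\right>}\mathds{1}_{\pi^{-1}(U)}$, the same identification of the correlation with $\widehat{m_{\xi,U}}(t)$ via compatibility and pushforward, and the same appeal to the non-speed property of any $\gamma>r\left(\left(m_{\xi,U}\right)_{\vert\mathbb{R}^*}\right)$. You actually make explicit two points the paper compresses into ``the result follows by definition of conditional correlations,'' namely that subtracting $\intInd{\Omega}{\abso{\mathbb{E}_\mu(f\vert\mathcal{I})}^2}{\mu}$ removes exactly the atom $\alpha=m_{\xi,U}(\{0\})$ so that what remains is $\widehat{\left(m_{\xi,U}\right)_{\vert\mathbb{R}^*}}(t)$, and that witnesses can be taken positive by conjugate symmetry.
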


\begin{proof}
Let $U\in \mathcal{A}$, $\xi\in \mathbb{Z}^d\setminus\{0\}$.
Let $\gamma>r\left(\left(m_{\xi,U}\right)_{\vert \mathbb{R}^*}\right)$.
Consider \[ f : z\in\Omega \mapsto e^{2i\pi \langle\xi\vert (\pi_2\circ\psi_{U}^{-1})(z)\rangle}\mathds{1}_{\pi^{-1}(U)}(z)\in \mathcal{C}^\infty(\pi^{-1}(U))\cap\mathbb{L}^2_\mu(\Omega).\]
We get \[\intInd{\Omega}{\overline{f}(f\circ g_t)}{\mu}=\intInd{U}{e^{2i\pi t\langle\xi\vert v_{U}(x)\rangle}}{\left(\mu_\pi\right)(x)}=\intInd{\mathbb{R}}{e^{2i\pi tz}}{m_{\xi,U}(z)}.\]
By optimality of $r\left(\left(m_{\xi,U}\right)_{\vert \mathbb{R}^*}\right)$, for all $C>0$, for all $T>0$, there exists $t>T$ such that
\[\abso{\intInd{\mathbb{R}}{e^{2i\pi tz}}{\left(m_{\xi,U}\right)_{\vert \mathbb{R}^*}}}>\frac{C}{t^\gamma}.\]
The result follows by definition of conditional correlations.
\end{proof}

\begin{remark}
When $\Omega \simeq M\times \mathbb{T}^d$, we can take $f_1,f_2$ globally defined of class $\mathcal{C}^\infty$ in the Proposition \ref{roofsp}.
\end{remark}

\subsection{Speed of decay of conditional correlations for absolutely continuous measures}

We assume that $\Omega=M\times \mathbb{T}^d$ is the trivial bundle, endowed with an absolutely continuous measures. The speed of decay will depends on the  regularity properties of the velocity vector~$v$. We will use stationnary phase method \cite{Swor} that allow us to evaluate in an optimal way oscillating integrals. The regularity of the velocity vector and the presence of critical points influences the convergence order.

Before this study, we recall that by an immediate consequence of the Morse lemma, the set of critical points of the velocity vector is discrete.

\begin{lemme} [Isolation of non-degenerated critical points]
Let $M$ be a finite dimensional $\mathcal{C}^2$ manifold and $v\in \mathcal{C}^2(M,\mathbb{R})$.
Then every non-degenerated critical point of $v$ is isolated.
\end{lemme}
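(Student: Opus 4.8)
The plan is to localise the problem in a chart and then reduce the isolation statement to the inverse function theorem applied to the differential of $v$, viewed as a vector field. Since isolation of a point is a purely topological (hence chart-invariant) property, I may fix any $\mathcal{C}^2$ chart $\varphi : W \to \mathbb{R}^n$ around a given non-degenerated critical point $p$, and replace $v$ by $u := v \circ \varphi^{-1}$, a $\mathcal{C}^2$ function on the open set $\varphi(W) \subset \mathbb{R}^n$. A point is critical for $v$ if and only if its image is critical for $u$, so it suffices to prove the claim for $u$ at the point $q := \varphi(p)$.

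The key object is the gradient map $G := \nabla u = (\partial_1 u, \dots, \partial_n u) : \varphi(W) \to \mathbb{R}^n$. Because $u$ is $\mathcal{C}^2$, the map $G$ is of class $\mathcal{C}^1$, and the critical points of $u$ are exactly the zeros of $G$. The Jacobian matrix of $G$ at $q$ is precisely the Hessian matrix $\left(\partial_i \partial_j u(q)\right)_{1 \le i,j \le n}$. By hypothesis $q$ is a non-degenerated critical point; recalling that at a critical point the Hessian is a well-defined bilinear form, non-degeneracy means this matrix is invertible, equivalently that the differential $DG(q)$ is an isomorphism of $\mathbb{R}^n$.

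With this in place, the inverse function theorem applies to $G$ at $q$: there is an open neighbourhood $V \ni q$ on which $G$ restricts to a $\mathcal{C}^1$ diffeomorphism onto its image, and in particular $G$ is injective on $V$. Since $G(q) = 0$, this injectivity forces $q$ to be the unique zero of $G$ in $V$, hence the unique critical point of $u$ in $V$. Transporting back through $\varphi$ produces a neighbourhood of $p$ containing no other critical point of $v$, so $p$ is isolated. I expect the only delicate point to be the bookkeeping that identifies the Jacobian of the gradient map with the Hessian and confirms that non-degeneracy is intrinsic at a critical point; once these are granted, the argument is a direct invocation of the inverse function theorem, which is the genuine crux of the proof.
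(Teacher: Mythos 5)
Your proof is correct, but it follows a genuinely different route from the paper. The paper argues by contradiction: assuming a sequence of critical points $x_n \to x$, it writes the first-order expansion of the gradient at $x$, divides by $\norme{\psi_U(x_n)-\psi_U(x)}$, extracts a convergent subsequence of the normalized increments on the unit sphere (using compactness in finite dimension), and passes to the limit to produce a nonzero vector $y$ with $Hess(v\circ\psi_U^{-1})(\psi_U(x))(y,\cdot)=0$, contradicting non-degeneracy. You instead apply the inverse function theorem to the gradient map $G=\nabla u$, whose Jacobian at the critical point is the Hessian; local injectivity of $G$ then forces the zero of $G$ to be unique near $q$. Both arguments are complete under the stated $\mathcal{C}^2$ hypothesis, and your identification of the Jacobian of the gradient with the Hessian, together with the chart-invariance of non-degeneracy at a critical point, is exactly the bookkeeping needed. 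The trade-off is this: your IFT argument is more direct (no contradiction, no subsequence extraction) and gives slightly more, namely that $\nabla u$ is a local diffeomorphism around the critical point; the paper's sequential argument, on the other hand, only uses differentiability of the gradient \emph{at the single point} $x$ (i.e., second-order differentiability of $v$ at $x$), which is strictly weaker than the $\mathcal{C}^1$ regularity of $\nabla u$ in a neighbourhood that the inverse function theorem requires — a distinction that is immaterial here since $v\in\mathcal{C}^2(M,\mathbb{R})$ is assumed, but which makes the paper's proof marginally more robust to weakening the hypotheses.
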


\begin{theorem}[Stationary phase, e.g. \cite{Swor}]
Let $\varphi\in\mathcal{C}^2(\mathbb{R}^n,\mathbb{R})$ with a unique critical point $x_c$.
We suppose that $x_c$ is non degenerated, in other words $det(Hess(x_c))\ne 0$.

Therefore, for any $a\in\mathcal{C}^1_0(\mathbb{R}^n,\mathbb{R})$, for all $t>0$, \[\intInd{\mathbb{R}^n}{e^{2i\pi t\varphi(x)}a(x)}{\lambda(x)}=\frac{a(x_c)e^{i\frac{\pi}{4}\left(\sum_{\lambda\in \sigma(Hess(x_c))}sgn(\lambda)\right)}}{t^{\frac{n}{2}}\sqrt{\abso{det(Hess(x_c))}}}+O_{+\infty}\left(\frac{1}{t^n}\right).\]

\end{theorem}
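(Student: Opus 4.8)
The plan is to follow the classical three step scheme for oscillatory integrals: localise near $x_c$, reduce the phase to its quadratic normal form, and evaluate an explicit Fresnel--Gauss integral; since only the modulus of the leading term is needed for the applications, the unimodular factor $e^{2i\pi t\varphi(x_c)}$ may be normalised away (for instance by assuming $\varphi(x_c)=0$). First I would localise: fix a smooth cutoff $\chi$ equal to $1$ on a small ball around $x_c$ and supported in a slightly larger one, and write the integral as $\int_{\mathbb{R}^n}e^{2i\pi t\varphi}\chi a\,d\lambda+\int_{\mathbb{R}^n}e^{2i\pi t\varphi}(1-\chi)a\,d\lambda$. On the compact set $\mathrm{supp}\,((1-\chi)a)$ the gradient $\nabla\varphi$ does not vanish, because $x_c$ is the unique critical point, so $\lVert\nabla\varphi\rVert$ is bounded below there. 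The transpose of $L=\frac{1}{2i\pi t}\lVert\nabla\varphi\rVert^{-2}\langle\nabla\varphi,\nabla\,\cdot\,\rangle$, which fixes $e^{2i\pi t\varphi}$, lets one integrate by parts once (licit since $\varphi\in\mathcal{C}^2$ makes $\nabla\varphi/\lVert\nabla\varphi\rVert^2$ of class $\mathcal{C}^1$ and $a\in\mathcal{C}^1$), moving a derivative onto the amplitude and gaining a factor $t^{-1}$; this shows the non stationary part contributes only to the remainder.

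Next I would normalise the phase on the support of $\chi$. Since $\varphi\in\mathcal{C}^2$ and $\mathrm{Hess}(x_c)$ is invertible, the Morse lemma gives a local change of variables $\Phi$ with $\Phi(0)=x_c$ and $\varphi(\Phi(y))=\varphi(x_c)+\tfrac12\langle y,Dy\rangle$, where $D=\mathrm{diag}(\varepsilon_1,\dots,\varepsilon_n)$ records the signs $\varepsilon_j=\mathrm{sgn}(\lambda_j)$ of the eigenvalues of $\mathrm{Hess}(x_c)$. Computing the Hessian of $y\mapsto\varphi(\Phi(y))$ at $0$ and using $\nabla\varphi(x_c)=0$ yields $D=(D\Phi(0))^{\top}\mathrm{Hess}(x_c)\,D\Phi(0)$, whence $\lvert\det D\Phi(0)\rvert=\lvert\det\mathrm{Hess}(x_c)\rvert^{-1/2}$. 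After this substitution the localised integral equals $\int_{\mathbb{R}^n}e^{2i\pi t\left(\varphi(x_c)+\frac12\langle y,Dy\rangle\right)}b(y)\,d\lambda(y)$ with $b(y)=a(\Phi(y))\lvert\det D\Phi(y)\rvert\,\chi(\Phi(y))$ and $b(0)=a(x_c)\lvert\det\mathrm{Hess}(x_c)\rvert^{-1/2}$.

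Then I would evaluate the model integral. Replacing $b$ by the constant $b(0)$ and extending the Gaussian to all of $\mathbb{R}^n$, it factorises into one dimensional Fresnel integrals $\int_{\mathbb{R}}e^{i\pi t\varepsilon_j u^2}\,du=t^{-1/2}e^{i\frac{\pi}{4}\varepsilon_j}$, whose product is $t^{-n/2}e^{i\frac{\pi}{4}\sum_j\varepsilon_j}=t^{-n/2}e^{i\frac{\pi}{4}\sum_{\lambda\in\sigma(\mathrm{Hess}(x_c))}\mathrm{sgn}(\lambda)}$; multiplied by $b(0)$ this is precisely the announced leading term. Two corrections remain: the Fresnel tail outside the cutoff, handled by the scaling $y=u/\sqrt t$ and one integration by parts on the non absolutely convergent tail, and the replacement of $b$ by $b(0)$, where $b(y)-b(0)=O(\lVert y\rVert)$ buys a further half power of $t$.

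The step I expect to be the main obstacle is the control of the remainder under the weak hypotheses $\varphi\in\mathcal{C}^2$, $a\in\mathcal{C}^1$: the Morse lemma then produces only a low regularity change of variables, and the non stationary integration by parts cannot be iterated, so each error has to be estimated directly rather than extracted from a complete asymptotic expansion. A variant that bypasses the Morse lemma keeps the quadratic Taylor polynomial $\varphi_2$ of $\varphi$ at $x_c$ and estimates $\int_{\mathbb{R}^n}e^{2i\pi t\varphi_2}\left(e^{2i\pi t(\varphi-\varphi_2)}-1\right)\chi a\,d\lambda$; the difficulty is that $\mathcal{C}^2$ regularity only provides $\varphi-\varphi_2=o(\lVert x-x_c\rVert^2)$, with no clean power, so this smallness must be balanced against the oscillation of the Gaussian to reach the stated order $O(t^{-n})$.
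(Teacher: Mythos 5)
The paper offers no proof of this theorem at all: it is quoted as a known result from Zworski's book \cite{Swor}, so there is no internal argument to compare against, and your outline (non-stationary cutoff, Morse lemma, Fresnel model integral) is the standard textbook scheme for it. The problem is that the scheme does not close under the stated hypotheses, and the obstacles you flag at the end are not technicalities but fatal gaps. First, on the non-stationary region a single integration by parts gains only $t^{-1}$, and with $\varphi\in\mathcal{C}^2$, $a\in\mathcal{C}^1$ it cannot be iterated (the differentiated amplitude involving $\nabla\varphi/\lVert\nabla\varphi\rVert^2$ is merely continuous); so the off-critical contribution is $O(t^{-1})$, far from the claimed $O(t^{-n})$ once $n\ge 2$. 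Second, for a phase that is only $\mathcal{C}^2$ the Morse lemma does not give a $\mathcal{C}^1$ diffeomorphism: in the construction $\varphi(x)-\varphi(x_c)=\frac12\langle x-x_c,Q(x)(x-x_c)\rangle$ the matrix $Q(x)=2\int_0^1(1-s)\,\mathrm{Hess}\,\varphi\bigl(x_c+s(x-x_c)\bigr)\,ds$ is only continuous, hence the normalizing map $\Phi$ is a homeomorphism differentiable at the critical point alone; the Jacobian factor $\lvert\det D\Phi(y)\rvert$ entering your amplitude $b$ need not exist off $y=0$, so the substitution step is not licit. Third, even granting the reduction, replacing $b$ by $b(0)$ with only $b(y)-b(0)=O(\lVert y\rVert)$ buys at most a factor $t^{-1/2}$, i.e.\ an error $O(t^{-n/2-1/2})$, again short of $O(t^{-n})$.

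Moreover, no amount of extra care can repair this, because the statement itself is a loose transcription of the smooth-case theorem. In \cite{Swor} both $\varphi$ and $a$ are $\mathcal{C}^\infty$, the leading term carries the factor $e^{2i\pi t\varphi(x_c)}$ (omitted here, as you noticed), and the error is $O(t^{-n/2-1})$. Under $\mathcal{C}^2$/$\mathcal{C}^1$ data the sharp conclusion is only the leading term with error $o(t^{-n/2})$; and even for $\mathcal{C}^\infty$ data the second term of the expansion is generically of exact order $t^{-n/2-1}$, which for $n\ge 3$ is strictly larger than the $O(t^{-n})$ claimed, so the bound as written is generically false in dimension $n\ge 3$. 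Your strategy is the right one for the correct (smooth) statement, and the honest conclusion of your attempt --- that the remainders cannot be controlled at the claimed order from the claimed hypotheses --- is in fact the correct mathematical verdict; what is missing is the recognition that the fault lies in the statement, which should be read with smooth data and error $O(t^{-n/2-1})$.
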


In the following we say that $a$ is a critical point of order  $q$ of a function $f$ if for $1\le m\le q$, $f^{(m)}(a)=0$ and $f^{(q+1)}(a)\neq0$.

We recall that for a probability measure $\mathbb{P}$, we note $\widehat{\mathbb{P}}$ its characteristic function.

\begin{lemme}[Convergence order with singular points in dimension $(1,d)$ with $d\geq 2$]
\label{Convdim}
Suppose that $dim(M)=1\le d$ and $M$ is a compact manifold of class $\mathcal{C}^\infty$.

Let $\xi\ne 0_{\mathbb{Z}^d}$ and $\ell\ge2$.

We suppose that $v$ is of class $\mathcal{C}^{\ell}$, that there exists a unique critical point of order $\ell-1$ for $\langle\xi\vert v(\cdot)\rangle$, and that all the other eventual critical points are of order strictly smaller.

We have
\[r\left(\widehat{\lambda}_{\langle\xi\vert v(\cdot)\rangle}\right)=\frac{1}{\ell}.\]

\end{lemme}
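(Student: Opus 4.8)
The plan is to identify $r\left(\widehat{\lambda}_{\left<\xi\vert v(\cdot)\right>}\right)$ with the exact decay rate of the oscillatory integral
\[
I(t)=\int_M e^{2i\pi t g(x)}\,d\lambda(x),\qquad g:=\left<\xi\vert v(\cdot)\right>\in\mathcal{C}^\ell(M,\mathbb{R}),
\]
since $I(t)=\widehat{\lambda_g}(t)$ is the Fourier transform of the push-forward measure. Because $\dim M=1$ and $M$ is compact without boundary, $M$ is a finite disjoint union of circles, so $\lambda$ is finite and $I(t)$ is a genuine oscillatory integral over a compact set. I would first record that every critical point of $g$ has finite order (at most $\ell-1$ by hypothesis), hence is an isolated zero of $g'$; by compactness there are finitely many. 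Fix a finite partition of unity adapted to small neighbourhoods of these points together with a region where $g'$ does not vanish, and estimate each piece separately. The proof then splits into an \emph{upper bound} $r\ge 1/\ell$ and a \emph{lower bound} $r\le 1/\ell$.

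For the upper bound I would bound $I(t)$ piecewise. On the non-critical region, where $|g'|\ge c>0$, a single integration by parts (writing $e^{2i\pi tg}=(2i\pi t g')^{-1}\frac{d}{dx}e^{2i\pi tg}$ and using $g\in\mathcal{C}^2$) gives a contribution $O(t^{-1})$. Near a critical point of order $q$, the first non-vanishing derivative is $g^{(q+1)}$ with $q+1\le\ell$; shrinking the neighbourhood so that $|g^{(q+1)}|$ is bounded below, van der Corput's lemma yields a contribution $O(t^{-1/(q+1)})$. The slowest of these rates is attained at the unique order-$(\ell-1)$ point, giving $O(t^{-1/\ell})$, while all other critical points (order $\le\ell-2$) give $O(t^{-1/(\ell-1)})=o(t^{-1/\ell})$. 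Summing, $|I(t)|\le C\,t^{-1/\ell}$, so $1/\ell$ is a Rajchman speed and $r\ge 1/\ell$.

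For the lower bound I would show $I(t)$ does not decay faster than $t^{-1/\ell}$ by isolating the degenerate critical point $a$ of order $\ell-1$. Choosing a cut-off $\chi$ equal to $1$ near $a$ and supported in a neighbourhood free of other critical points, the upper-bound estimates give $I(t)=\int\chi(x)e^{2i\pi tg(x)}\,dx+o(t^{-1/\ell})$. Taylor's theorem at $a$ yields $g(x)=g(a)+c(x-a)^\ell+R(x)$ with $c=g^{(\ell)}(a)/\ell!\ne0$ and $R(x)=o(|x-a|^\ell)$. Rescaling $x=a+t^{-1/\ell}u$ turns the local integral into $t^{-1/\ell}e^{2i\pi tg(a)}\int\chi(a+t^{-1/\ell}u)\,e^{2i\pi cu^\ell}\,e^{2i\pi tR(a+t^{-1/\ell}u)}\,du$, whose integrand converges pointwise to $e^{2i\pi cu^\ell}$ because $tR(a+t^{-1/\ell}u)\to0$. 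The classical degenerate stationary-phase asymptotic then gives $I(t)\sim K\,e^{2i\pi tg(a)}\,t^{-1/\ell}$, with leading constant $K$ proportional to the nonzero Fresnel-type integral $\int_{\mathbb{R}}e^{2i\pi cu^\ell}\,du\ne0$. Hence $t^{1/\ell}|I(t)|\to|K|>0$, so $|I(t)|$ is not $O(t^{-r'})$ for any $r'>1/\ell$, giving $r\le 1/\ell$. Combining the two bounds yields $r\left(\widehat{\lambda}_{\left<\xi\vert v(\cdot)\right>}\right)=1/\ell$.

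The main obstacle is the lower bound, namely justifying the degenerate stationary-phase asymptotic under only $\mathcal{C}^\ell$ regularity: the quoted stationary-phase theorem covers only the non-degenerate case ($\det\mathrm{Hess}\ne0$, i.e.\ $\ell=2$), and for $\ell>2$ the rescaled integrand's pointwise limit $e^{2i\pi cu^\ell}$ is not absolutely integrable, so one cannot pass to the limit by dominated convergence. The delicate point is to control the oscillatory tails uniformly in $t$ (again via van der Corput estimates in the rescaled variable) and to verify that the remainder $R=o(|x-a|^\ell)$ does not alter the leading coefficient, so that $K\ne0$ and no accidental cancellation occurs.
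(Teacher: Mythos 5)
Your skeleton matches the paper's: finitely many critical points by compactness, a partition of unity, $O(1/t)$ on the non-stationary region, contributions $O\left(t^{-1/(q+1)}\right)$ from the lower-order critical points, and an exact asymptotic at the unique maximal-order point to pin down the Rajchman order. Your upper bound, giving $r\geq 1/\ell$ via van der Corput, is complete and correct.

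The genuine gap is exactly where you flag it: the lower bound. You invoke ``the classical degenerate stationary-phase asymptotic'' to conclude $I(t)\sim K e^{2i\pi t g(a)}t^{-1/\ell}$, but as you yourself note, the stationary-phase theorem you can quote covers only the non-degenerate case $\ell=2$, the rescaled integrand has a non-integrable pointwise limit so dominated convergence does not apply, and you neither control the oscillatory tails uniformly in $t$ nor verify that the remainder $R=o(|x-a|^\ell)$ leaves the leading constant intact. Since everything beyond the upper bound rests on precisely this asymptotic, announcing it without proof establishes only $r\geq 1/\ell$, not the claimed equality.

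The paper closes this gap with two devices you could adopt. First, instead of Taylor-plus-remainder it performs a normal-form change of variable $w(x)=(x-a)\left(\frac{\langle\xi\vert v^{(\ell)}(a)\rangle}{\ell!}+h(x-a)\right)^{1/\ell}$, which satisfies $w'(a)\neq 0$ and is therefore locally invertible; in the new coordinate the phase becomes exactly the monomial $x^{\ell}$ plus a constant, so there is no remainder term left to control. Second, after the rescaling $x\mapsto x/t^{1/\ell}$ it regularizes the conditionally convergent integral through an integration-by-parts identity that trades $e^{2i\pi x^{\ell}}$ for the kernel $\frac{(\ell-1)\left(e^{2i\pi x^{\ell}}-1\right)}{x^{\ell}}$ plus a term carrying the derivative of the cut-off with an extra factor $t^{-1/\ell}$; this kernel is absolutely integrable (bounded near $0$, decaying like $|x|^{-\ell}$ at infinity), so Lebesgue's theorem yields the limit $\check\psi(0)\,I_\ell$ with $I_\ell=\int_{\mathbb{R}}\frac{e^{2i\pi x^{\ell}}-1}{x^{\ell}}\,d\lambda(x)$, and $I_\ell\neq 0$ is checked by a parity argument ($I_\ell\in(-\infty,0)$ for $\ell$ even, $I_\ell\in i(0,\infty)$ for $\ell$ odd). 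These two steps are precisely the ``delicate points'' your outline defers, so your proposal is an honest plan rather than a proof; note also that your candidate constant $\int_{\mathbb{R}}e^{2i\pi c u^{\ell}}\,du$ would still need its own nonvanishing argument, which the paper's regularized constant $I_\ell$ receives explicitly.
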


\begin{proof}
$M$ is compact Hausdorff, so the number of critical points for functions $\langle\xi\vert v\rangle$ is finite.
Let $(a_k)_{k\in\intervEnt{1}{m}}$ the family of critical point of $\langle\xi\vert v(\cdot)\rangle$ with respectives orders $l_k-1\geq 1$, $1\leq m<l_k,$
\[\langle\xi\vert v^{(m)}(a_k)\rangle=0\, and \, \langle\xi\vert v^{(l_k)}(a_k)\rangle\ne 0.\]
Let $(U_k)_{k\in \intervEnt{1}{m}}$ charts such that for all $k\in\intervEnt{1}{m}$, $a_k\in U_k$. We have
\[\langle\xi\vert v(\varphi_{U_k}^{-1}(x))\rangle=\langle\xi\vert v(a_k)\rangle+\frac{(x-\varphi_{U_k}(a_k))^{l_k}}{l_k!}\langle\xi\vert v^{(l_k)}(a_k)\rangle+(x-\varphi_{U_k}(a_k))^{l_k} h(x-\varphi_{U_k}(a_k))\]
with $h : \mathbb{R}\to\mathbb{R}$ such that $h(x)\xrightarrow[x\to 0]{}h(0)=0.$
Let pose for $k\in \intervEnt{1}{m},$ \[w_k : x\in\mathbb{R} \mapsto (x-\varphi_{U_k}(a_k))\left(\frac{\langle\xi\vert v^{(l_k)}(a_k)\rangle}{l_k!}+h(x-\varphi_{U_k}(a_k))\right)^{\frac{1}{l_k}}.\]
Therefore \begin{equation}\label{eq:w_k_derivee}w_k^' : x\in\mathbb{R}\mapsto \left(\frac{\langle\xi\vert v^{(l_k)}(a_k)\rangle}{l_k!}+h(x-\varphi_{U_k}(a_k))\right)^{\frac{1}{l_k}}+(x-\varphi_{U_k}(a_k))\kappa(x-\varphi_{U_k}(a_k))\end{equation}
with $\kappa_k(x)=\frac{h^'(x)}{\left(\langle\xi\vert v^{(l)}(a_k)\rangle+h(x)\right)^{\frac{1-l_k}{l_k}}}$.
Applying \eqref{eq:w_k_derivee} to $x=\varphi_{U_k}(a_k)$, \[w_k^'(\varphi_{U_k}(a_k))=\left(\frac{\langle\xi\vert v^{(l_k)}(a)\rangle}{l_k!}\right)^{\frac{1}{l_k}}\ne 0.\]
We get a local inverse of $w_k$ on a neighbourhood $W_k$ of $0$.
Let $(V_k)_{k\in \intervEnt{1}{m+m^'}}$ a family of charts on $M$ such that, for all $k\in \intervEnt{1}{m}$,
\begin{equation}\label{eq:chart_cover} (V_k = w_k^{-1}(W_k)\subset U_k\,and\,\forall j\in \intervEnt{m+1}{m^'},a_k\not\in V_k).\end{equation}
Let $(\psi_k)_{k\in\intervEnt{1}{m+m^'}}$ a partition of unity subordinated to $(V_k)_{k\in\intervEnt{1}{m+m^'}}$.
We get immediately that for $k\in \intervEnt{1}{m}$, $\psi_k(a_k)=1$.
By local reverse, for $k\in \intervEnt{1}{m}$, we get that on $W_k$, for all $x\in W_k$,
\begin{equation}\label{eq:local_inverse}\langle\xi\vert v(\varphi_{U_k}^{-1}(w_k^{-1}(x)))\rangle=\langle\xi\vert v^{(l_k)}(a_k)\rangle+x^{l_k}.\end{equation}
Writing $\check{\lambda}$ the push forward measure by charts of Lebesgue measure on the manifold $M$, for $k\in\intervEnt{1}{m}$, we get \begin{equation}\label{eq:change_manifold}\intInd{\varphi_{U_k}^{-1}(w^{-1}(W_k))}{e^{2i\pi t\langle\xi\vert v(x)\rangle}\psi_k(x)}{\check{\lambda}(x)}=e^{2i\pi t\langle\xi\vert v(\varphi_{W_k}^{-1}(w^{-1}(a_k)))\rangle}\intInd{W_k}{e^{2i\pi tx^{l_k}}\check{\psi_k}(x)}{\lambda(x)}\end{equation}
with \[\check{\psi_k} : x\in W_k\mapsto \psi\left(\varphi_{U_k}^{-1}(w_k^{-1}(x))\right)J(\varphi_{U_k}^{-1})(w_k^{-1}(x))J(w_k^{-1}(x))(x)\] and $J$ the Jacobian.
We know that $\check{\psi_k}$ is subordinated to $(V_j)_{j\in \intervEnt{1}{m+m^'}}$. Morevoer, by \eqref{eq:chart_cover}, the support of $\check{\psi_k}$ is included in $U_k$.
Thus, by \eqref{eq:change_manifold}, for all $t\ne 0$,
\[\begin{array}{ll}
\intInd{\varphi_{U_k}^{-1}(w_k^{-1}(W_k))}{e^{2i\pi t\langle\xi\vert v(x)\rangle}\psi(x)}{\check{\lambda}(x)}&=e^{2i\pi t\langle\xi\vert v(\varphi_{U_k}^{-1}(w_k^{-1}(a_k)))\rangle}\frac{1}{t^{\frac{1}{l_k}}}\intInd{\mathbb{R}}{e^{2i\pi x^{l_k}}\check{\psi_k}\left(\frac{x}{t^{\frac{1}{l_k}}}\right)}{\lambda(x)}.
\end{array}\]
Integrating by parts $u(x)v^'(x)$ with $u(x)=\frac{\check{\psi_k}\left(\frac{x}{t^{\frac{1}{l_k}}}\right)}{2il_k\pi x^{l_k-1}}$ and $v(x)=e^{2i\pi x^{l_k}}-1$, 
\[
\begin{split}
&\intInd{\mathbb{R}}{e^{2i\pi x^{l_k}}\check{\psi_k}\left(\frac{x}{t^{\frac{1}{l_k}}}\right)}{\lambda(x)}\\
&=\frac{1}{2i\pi l_k}\left(\intInd{\mathbb{R}}{\frac{(l_k-1)(e^{2i\pi x^{l_k}}-1)}{x^{l_k}}\check{\psi_k}\left(\frac{x}{t^{\frac{1}{l_k}}}\right)}{\lambda(x)}-\frac{1}{t^{\frac{1}{l_k}}}\intInd{\mathbb{R}}{\frac{(e^{2i\pi x^{l_k}}-1)}{x^{l_k-1}}\check{\psi_k}^'\left(\frac{x}{t^{\frac{1}{l_k}}}\right)}{\lambda(x)}\right).
\end{split}
\]
We know that for all $\alpha>0,$ \begin{equation}\label{eq:bound_psi_derivee}x\in \mathbb{R}\mapsto {\check{\psi_k}^'(x)}{(1+\abso{x}^\alpha)},x\in \mathbb{R}\mapsto {\check{\psi_k}(x)}{(1+\abso{x}^\alpha)}\text{ are bounded by a constant $C>0$.}\end{equation} 
Using the development at $x=0$ of $x\in\mathbb{R}\mapsto \frac{(e^{2i\pi x^{l_k}}-1)}{x^{l_k-1}}$, we get that \begin{equation}\label{eq:dvpt_bound}x\in\mathbb{R}\mapsto {\frac{(e^{2i\pi x^{l_k}}-1)}{x^{l_k-1}}}{(1+\abso{x}^{l_k-1})}\text{ is bounded by a constant $C_2>0$}.\end{equation}
For all $t\ne 0$ such that $\abso{t}>1$, by \eqref{eq:bound_psi_derivee}, for all $\alpha>0$,
\begin{equation}
    \label{eq:bound_psi} x\in \mathbb{R}\mapsto {\check{\psi_k}^'\left(\frac{x}{t^{\frac{1}{l_k}}}\right)}\frac{(1+\abso{x}^\alpha)}{t^{\frac{1}{l_k}}}\text{ is bounded by constant $C_3>0$}.
\end{equation}
Thus, by \eqref{eq:bound_psi_derivee} there exists $C_4>0$ such that for all $x\in\mathbb{R}$, for all $t\ne 0$, \begin{equation}\label{eq:bound_Hess}\abso{\frac{(l_k-1)(e^{2i\pi x^{l_k}}-1)}{x^{l_k}}\check{\psi_k}\left(\frac{x}{t^{\frac{1}{l_k}}}\right)}{(1+\abso{x}^{l_k})}\leq C_4\norme{\psi}_\infty.\end{equation}

By Lebesgue convergence theorem,\eqref{eq:dvpt_bound},\eqref{eq:bound_psi} and \eqref{eq:bound_Hess}
\[\intInd{\mathbb{R}}{e^{2i\pi x^{l_k}}\check{\psi_k}\left(\frac{x}{t^{\frac{1}{l_k}}}\right)}{\lambda(x)}\xrightarrow[t\to\pm\infty]{}\frac{\check{\psi_k}(0)}{2i\pi l_k}\intInd{\mathbb{R}}{\frac{(l_k-1)(e^{2i\pi x^{l_k}}-1)}{x^{l_k}}}{\lambda(x)}.\]
For $k\in \intervEnt{m+1}{m+m^'},$ by Greene formula, we get \[\intInd{\varphi_{U_k}^{-1}(V_k)}{e^{2i\pi t\langle\xi\vert v(x)\rangle}\psi_k(x)}{\check{\lambda}(x)}\in O_{\pm\infty}\left(\frac{1}{t}\right).\]
Thus, \[\intInd{M}{e^{2i\pi t\langle\xi\vert v(x)\rangle}}{\check{\lambda}(x)}=\sum_{k=1}^m\frac{1}{t^{\frac{1}{l_k}}}\left(J_{\varphi_{U_k}^{-1}}\left(\varphi_{U_k}(a_k)\right)\left(\frac{l_k!}{\langle\xi\vert v^{(l_k)}(a_k)\rangle}\right)^{\frac{1}{l_k}}I_{l_k}+o_{\pm\infty}(1)\right)+O_{\pm\infty}\left(\frac{1}{t}\right)\]
with $I_{l} :=\frac{1}{2i\pi l}\intInd{\mathbb{R}}{\frac{e^{2i\pi x^{l}}-1}{x^{l}}}{\lambda(x)}.$ A simple analysis shows that $I_l$ does not vanish\footnote{For $l$ even $I_l\in(-\infty,0)$ trivially, while for $l$ odd we have $I_l\in i(0,\infty)$, using a decomposition of the integral on intervals $[2n,2n+2]$.}.

By hypothesis, there exists just one critical $a$ point with maximal order $\ell$, hence
 \[\intInd{M}{e^{2i\pi t\langle\xi\vert v(x)\rangle}}{\check{\lambda}(x)}= \frac{1}{t^{\frac{1}{\ell}}}\left(J_{\varphi_{U_j}^{-1}}\left(\varphi_{U_j}(a)\right)\left(\frac{\ell!}{\langle\xi\vert v^{(\ell)}(a)\rangle}\right)^{\frac{1}{\ell}}I_\ell+o_{\pm\infty}(1)\right).\]
Thus, \[r\left(\widehat{\lambda}_{\langle\xi\vert v(\cdot)\rangle}\right)=\frac{1}{\ell}.\]
\end{proof}

We get then the next proposition.

\begin{proposition}
    Under the hypothesis of Lemma \ref{Convdim}, we get that the order of decay of correlation $\gamma$ with $f_1,f_2\in\mathcal{C}^\infty(\Omega)$ satisfies \[\gamma \leq \frac{1}{\ell}.\] 
\end{proposition}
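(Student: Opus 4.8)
The plan is to obtain this bound as a corollary of the \emph{Speed roof} Proposition~\ref{roofsp}, into which I feed the precise Rajchman order computed in Lemma~\ref{Convdim}. First I would make explicit what $\gamma$ denotes: the best exponent for which the uniform estimate $\abso{\mathbb{E}_\mu(Cov_t(f_1,f_2\vert\mathcal{I}))}\leq C\,t^{-\gamma}$ can hold over all $f_1,f_2\in\mathcal{C}^\infty(\Omega)$. Bounding $\gamma$ from above therefore reduces to exhibiting a single pair of smooth functions whose conditional correlation does not decay faster than $t^{-1/\ell}$.

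Second, I would identify the measure that drives this decay. Since $\Omega=M\times\mathbb{T}^d$ is the trivial bundle carrying an absolutely continuous measure, one chart covers $M$ and the relevant pushforward measure is $\widehat\lambda_{\langle\xi\vert v(\cdot)\rangle}$, whose Rajchman order is exactly $\tfrac{1}{\ell}$ by Lemma~\ref{Convdim}. I would observe that this measure has no atom at $0$: an atom would force $\langle\xi\vert v\rangle$ to be constant on a set of positive $\widehat\lambda$-measure, hence to admit critical points of infinite order, contradicting the finite-order hypothesis. Thus $(m_{\xi,U})_{\vert\mathbb{R}^*}$ coincides with $\widehat\lambda_{\langle\xi\vert v(\cdot)\rangle}$ and its Rajchman order equals $\tfrac{1}{\ell}$.

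Third, I would invoke Proposition~\ref{roofsp} with $r\big((m_{\xi,U})_{\vert\mathbb{R}^*}\big)=\tfrac{1}{\ell}$: for every $\gamma'>\tfrac{1}{\ell}$ there is a pair $(f_1,f_2)$ of globally $\mathcal{C}^\infty(\Omega)$ functions (admissible on the trivial bundle by the remark following Proposition~\ref{roofsp}) for which $\forall C>0,\forall T>0,\exists t>T,\abso{\mathbb{E}_\mu(Cov_t(f_1,f_2\vert\mathcal{I}))}>C\,t^{-\gamma'}$. Were the uniform order of decay some $\gamma_0>\tfrac{1}{\ell}$, picking $\gamma'\in(\tfrac{1}{\ell},\gamma_0)$ would contradict $\abso{\mathbb{E}_\mu(Cov_t)}\leq C\,t^{-\gamma_0}$ along $t\to+\infty$. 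Hence $\gamma\leq\tfrac{1}{\ell}$.

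The only genuinely delicate point is the identification in the second step: checking that passing to $(\cdot)_{\vert\mathbb{R}^*}$ leaves the Rajchman order untouched, and that the test function used in Proposition~\ref{roofsp} has vanishing conditional expectation, so that its covariance reduces to the bare oscillatory integral $\int_M e^{2i\pi t\langle\xi\vert v(x)\rangle}\,d\widehat\lambda(x)$. Both facts rest on $\{x:\langle\xi\vert v(x)\rangle=0\}$ being $\widehat\lambda$-negligible, which follows from the isolation of critical points. All the analytic content — the degenerate stationary-phase asymptotics producing a leading term $\sim c\,t^{-1/\ell}$ with $c\neq0$ — is already supplied by Lemma~\ref{Convdim}, so no new oscillatory-integral estimate is required.
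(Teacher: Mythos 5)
Your argument is correct, but it takes a genuinely different route from the paper's. The paper's own proof is a one-liner: it re-enters the proof of Lemma \ref{Convdim} and takes the pair of smooth functions already built there (the oscillating exponential together with the bump $\psi_j$ localized at the critical point $a$ of maximal order $\ell$), whose conditional covariance is precisely the localized oscillatory integral $\int e^{2i\pi t\langle\xi\vert v(x)\rangle}\psi_j(x)\,d\widehat{\lambda}(x)$; the stationary-phase expansion in the lemma, together with the footnote $I_\ell\neq 0$, gives an exact equivalent $c\,t^{-1/\ell}(1+o(1))$ with $c\neq0$, whence $\gamma\le 1/\ell$. You instead use Lemma \ref{Convdim} only through its conclusion $r\bigl(\widehat{\lambda}_{\langle\xi\vert v(\cdot)\rangle}\bigr)=\tfrac{1}{\ell}$ and feed it into the Speed roof Proposition \ref{roofsp}; this is more modular, making the statement a formal consequence of two results already on record, at the price of the bookkeeping you rightly single out (no atom at $0$, negligibility of $\{\langle\xi\vert v\rangle=0\}$, globally defined test functions). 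Two small repairs are needed in that bookkeeping. First, since $M$ is compact in Lemma \ref{Convdim}, no single chart covers $M$, so Proposition \ref{roofsp} as stated (per chart $U$, with $m_{\xi,U}$ the push-forward of the \emph{restricted} base measure) does not literally apply with $U=M$; you must invoke the global version from the remark following it — equivalently, redo its two-line computation with $f(x,y)=e^{2i\pi\langle\xi\vert y\rangle}$ on $M\times\mathbb{T}^d$, whose covariance is exactly $\int_M e^{2i\pi t\langle\xi\vert v(x)\rangle}\,d\widehat{\lambda}(x)$, the Fourier transform of the full push-forward whose order the lemma computes. Second, a level set of positive measure forces critical points of order at least $\ell$ (all derivatives up to order $\ell$ vanish at density points), not of \emph{infinite} order, since $v$ is only assumed $\mathcal{C}^\ell$; this still contradicts the hypothesis, so your no-atom claim stands. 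In exchange for these checks, your route avoids re-doing any asymptotics, while the paper's route yields slightly more than the statement: an explicit smooth pair whose covariance is genuinely of order $t^{-1/\ell}$, not merely the failure of any faster uniform decay.
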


\begin{proof}
    In proof of Lemma \ref{Convdim}, you can take $f_1=e^{2i\pi \langle\xi\vert v(\cdot)\rangle}$ and $f_2 = \psi_j$ with $j$ the index of the chart containing the critical point $a$ with a maximal order $\ell$.
\end{proof}

We can now treat the general case.

The following lemma is obtained with Theorem 7.5 p.226 in \cite{SiDM}.

\begin{lemme}[Convergence order around a singular points with analytical functions]

We will use notations and results of \cite{SiDM}.

We place ourselves in $\mathbb{R}^n$.

Let $\xi\ne 0_{\mathbb{Z}^d}$.

Let $v$ be such that  $\langle\xi\vert v\rangle$ is analytic with $0$ a critical point of multiplicity $m\in\mathbb{N}^*$

Then there exist $\alpha\in\mathbb{Q}^*_+$ and $j\in\mathbb{N}$ in a compact  neighbourhood $V$ of $0$ such that for all $\varphi\in \mathcal{D}_V(\Omega)$,

\[\exists C\in\mathbb{R}^*,
{\intInd{\mathbb{R}^n}{e^{2i\pi t\langle\xi\vert v(x)\rangle}\varphi(x)}{\lambda(x)}}
\sim {\frac{C(\ln(t))^j}{t^\alpha}}.\]
\end{lemme}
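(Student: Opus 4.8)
The plan is to invoke the resolution-of-singularities description of oscillatory integrals, which is exactly the content of Theorem 7.5 p.226 in \cite{SiDM}. Write $f=\langle\xi\vert v\rangle$, a real-analytic function on a neighbourhood of $0$ in $\mathbb{R}^n$ with an isolated critical point of finite multiplicity $m$ at the origin. The first step is to recall monomialisation by Hironaka resolution: there is a proper analytic modification $\pi\colon Y\to V$, an isomorphism off the critical fibre, such that in each local chart of $Y$ one has $f\circ\pi(y)=c\prod_i y_i^{k_i}$ and $\pi^{*}(dx)=u(y)\prod_i y_i^{p_i}\,dy$ with $u$ a non-vanishing analytic unit. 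This reduces the phase to a normal-crossings monomial and the amplitude to a monomial times a smooth function.

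The second step is to push the integral through the resolution and evaluate it chart by chart. Up to the unit $u$ and a subordinate partition of unity, the integral factors on each chart into one-dimensional model integrals of the form $\int_{\mathbb{R}} e^{2i\pi t\,y^{k}}y^{p}\,\psi(y)\,dy$, each of which carries a complete asymptotic expansion in powers $t^{-(p+1)/k}$ (obtained by the change of variable $s=t^{1/k}y$ and analytic continuation of the resulting Gamma-type integral, exactly as in the one-dimensional computations of Lemma~\ref{Convdim}). Taking products over each chart and summing over charts yields the asymptotic expansion
\[
\intInd{\mathbb{R}^n}{e^{2i\pi t\langle\xi\vert v\rangle}\varphi}{\lambda}\ \sim\ \sum_{\alpha}\sum_{j=0}^{n-1} a_{\alpha,j}(\varphi)\,\frac{(\ln t)^{j}}{t^{\alpha}},\qquad t\to+\infty,
\]
where $\alpha$ runs over a discrete set of positive rationals built from the ratios $(p_i+1)/k_i$, and the power of $\ln t$ records the number of coordinate hyperplanes simultaneously minimising the exponent. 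This is the assertion of Theorem 7.5.

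The third step is to read off the leading term. Let $\alpha$ be the smallest exponent occurring with a nonzero coefficient and $j$ the largest power of $\ln t$ attached to it; both are invariants of the singularity of $\langle\xi\vert v\rangle$ at $0$ (its oscillation index and multiplicity) and do not depend on $\varphi$. The leading coefficient $a_{\alpha,j}(\varphi)$ is a distribution supported at the critical point, a nonzero multiple of $\varphi(0)$. Choosing $V$ small and restricting to test functions with $\varphi(0)\neq 0$ makes $C:=a_{\alpha,j}(\varphi)\in\mathbb{R}^*$, and dividing the integral by $C(\ln t)^{j}t^{-\alpha}$ gives the stated limit $1$.

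The main obstacle is the control of cancellations. The leading coefficient is a sum of contributions coming from the several charts of the resolution and from the coordinate hyperplanes realising the minimal exponent, so one must verify that these do not cancel and that $C$ is genuinely nonzero (and can be arranged real, after the normalisation built into the one-dimensional model integrals $I_\ell$ of Lemma~\ref{Convdim}, whose nonvanishing is recorded there). This is exactly where the finiteness of the multiplicity $m$ enters, and it is what ultimately pins down the pair $(\alpha,j)$ and the sign of $C$; one should also note that, as stated, the conclusion requires $\varphi(0)\neq 0$, since otherwise the leading term is killed and a different pair $(\alpha,j)$ governs the asymptotics.
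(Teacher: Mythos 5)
You should know first that the paper contains no proof of this lemma at all: the sentence preceding it says only that ``the following lemma is obtained with the theorem 7.5 p.~226 in \cite{SiDM}'', and nothing further is given. Your proposal is therefore not an alternative to the paper's argument but a reconstruction of the proof of the very theorem the paper cites, and your sketch is the correct, standard one: Hironaka resolution to put the phase $\langle\xi\vert v\rangle$ and the Jacobian into monomial normal-crossings form, chart-by-chart reduction to one-dimensional model integrals $\int e^{2i\pi t y^{k}}y^{p}\psi(y)\,dy$, and assembly of the resulting expansions into $\sum_{\alpha}\sum_{j}a_{\alpha,j}(\varphi)(\ln t)^{j}t^{-\alpha}$ with $\alpha$ running over arithmetic progressions of positive rationals. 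In that sense you and the paper rest on the same foundation; you have simply opened the black box the paper invokes.

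Two of your remarks go beyond the paper and deserve emphasis, one with a correction. The cancellation problem you isolate --- that the candidate leading coefficient is a sum of contributions from several charts and must be shown nonzero --- is precisely the classical question of \emph{attainability} of the oscillation index, and it is genuinely delicate; but your claim that ``this is exactly where the finiteness of the multiplicity $m$ enters'' is not right: finiteness of the multiplicity gives an isolated critical point and hence a finite set of candidate exponents, but it does not by itself exclude cancellation. Nonvanishing of the leading coefficient is a theorem (Varchenko) under additional hypotheses, e.g.\ when the phase has a local extremum at the critical point or is Newton-nondegenerate, and neither your sketch nor the paper's citation settles it in general. Second, you are right that the lemma as stated cannot hold: since $(\alpha,j)$ is fixed before the quantifier over $\varphi$, any test function supported in $V$ but away from the critical point makes the integral decay faster than every power of $t$ by non-stationary phase, so no $C\in\mathbb{R}^{*}$ can satisfy the stated limit; a hypothesis such as $\varphi(0)\neq 0$ (together with attainability) is needed, and even then the constant $C$ is in general complex rather than real. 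Identifying this defect is a genuine improvement over the paper.
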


\begin{definition}[Logarithmic convergence order]
Consider a probability measure $\nu$  on $(\mathbb{R},\mathcal{B}(\mathbb{R}))$

We call logarithmic convergence order of $\nu$ the value

\[rl(\nu) :=\inf\left\{\beta >0 : \exists C>0,\forall t>0,\abso{\widehat{\mu(t)}}\leq\frac{C\abso{\ln(t)}^\beta}{t^{r(\nu)}}\right\}.\]
\end{definition}

Next result may be obtained by a partition of unity and the previous lemma.
\begin{proposition}[Convergence order with singular points in dimension $(n,d)$ with $d\geq 2$]

Suppose that $M$ is analytic.
Let $\xi\ne 0_{\mathbb{Z}^d}$.
Suppose that $v$ such that  $\langle\xi\vert v\rangle$ is analytic.
We have that \[r\left(\lambda_{\langle\xi\vert v(\cdot)\rangle}\right)\in \mathbb{Q}_+^*\text{ and }rl\left(\lambda_{\langle\xi\vert v(\cdot)\rangle}\right)\in\mathbb{N}.\]
\end{proposition}

\begin{coro}
Let $v$ be a $\mathcal{C}^2$ function.
If there exists only one critical points $a\in M$ and it satisfies
 \[\exists\xi\ne 0_{\mathbb{Z}^d}, \left(\nabla(\langle\xi\vert v(\cdot)\rangle)(a)=0\, and \, det(Hess\langle\xi\vert v(\cdot)\rangle(a))\ne 0\right)\]
then $r(\widehat{\lambda}_{\langle\xi\vert v(\cdot)\rangle})= \frac{n}{2}.$
\end{coro}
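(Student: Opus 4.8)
The plan is to reduce the corollary to the one-dimensional stationary phase theorem (the Stationary phase theorem stated just above), exactly as Lemma~\ref{Convdim} reduces to the one-variable oscillatory integrals. The quantity we must control is the Fourier transform of the pushforward measure, which by the pushforward theorem equals the oscillatory integral $\intInd{M}{e^{2i\pi t<\xi\vert v(x)>}}{\widehat{\lambda}(x)}$. The hypothesis says that the phase $\varphi := <\xi\vert v(\cdot)>$ is $\mathcal{C}^2$, has a single critical point $a$, and that this critical point is \emph{non-degenerate}, i.e. $\det(\mathrm{Hess}\,\varphi(a))\ne0$. This is precisely the setting of the Stationary phase theorem, so the bulk of the work is organisational rather than analytical.

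First I would fix a chart $U\in\mathcal{A}$ containing $a$ and a partition of unity $(\psi_k)_k$ subordinate to a finite cover of the compact manifold $M$, splitting the chart containing $a$ from the others (this is legitimate since $M$ is compact Hausdorff, as used in Lemma~\ref{Convdim}). On every piece $\psi_k$ whose support avoids $a$, the phase $\varphi$ has no critical point, so an integration-by-parts / Green-formula argument (already invoked for the non-critical charts in the proof of Lemma~\ref{Convdim}) gives a contribution that is $O_{\pm\infty}(1/t)$, hence negligible compared with $t^{-n/2}$ when $n\ge2$; when $n=1$ one recovers the statement directly from the stationary phase formula. On the single piece whose support contains $a$, I would push the integral to $\mathbb{R}^n$ through the chart, write it as $\intInd{\mathbb{R}^n}{e^{2i\pi t\widetilde\varphi(x)}a_0(x)}{\lambda(x)}$ with $a_0\in\mathcal{C}^1_0$ incorporating the Jacobian and $\psi_k$, and apply the Stationary phase theorem. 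Since $a$ is the only critical point and it is non-degenerate, the theorem yields a leading term of exact size $t^{-n/2}$ with a nonzero coefficient $a_0(a_c)/\sqrt{\abso{\det \mathrm{Hess}}}$ (the amplitude at the critical point does not vanish because the Jacobian is nonzero and $\psi_k(a)=1$), plus a remainder $O_{\pm\infty}(t^{-n})$.

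Combining the two contributions gives
\[
\intInd{\mathbb{R}}{e^{2i\pi tz}}{\widehat{\lambda}_{<\xi\vert v(\cdot)>}(z)}=\frac{c}{t^{n/2}}+o_{\pm\infty}\!\left(\frac{1}{t^{n/2}}\right),\qquad c\ne0,
\]
from which both the upper bound $\abso{\widehat{\nu}(t)}\le C t^{-n/2}$ (so $r\ge n/2$) and the matching lower bound along a sequence (so $r\le n/2$, by optimality exactly as in Proposition~\ref{roofsp}) follow, giving $r(\widehat{\lambda}_{<\xi\vert v(\cdot)>})=\frac{n}{2}$.

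The main obstacle I anticipate is not the stationary phase estimate itself but verifying that the amplitude and remainder are controlled \emph{uniformly} after passing through the chart: one must check that the Jacobian factor $J_{\varphi_U^{-1}}$ together with the cutoff $\psi_k$ produces a genuine $\mathcal{C}^1_0$ amplitude whose value at the critical point is nonzero, and that the non-critical pieces really contribute only $O(1/t)$ (which requires that $\nabla\varphi$ stays bounded away from zero on their compact supports, using the $\mathcal{C}^2$ regularity and the fact that $a$ is the unique critical point). A minor point is that the Stationary phase theorem as stated is for a \emph{unique} critical point on $\mathbb{R}^n$; the partition of unity localises the problem so that each applied instance indeed has a single critical point, so this hypothesis is met.
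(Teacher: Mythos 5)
Your outline coincides with the paper's proof: localize with a partition of unity on charts, push the integral to $\mathbb{R}^n$, apply the quoted stationary phase theorem where the critical point sits, treat the remaining pieces separately, check that the leading coefficient does not vanish, and read off $r=\frac{n}{2}$ from the two-sided asymptotics. The genuine gap is in the step you present as routine, namely the non-critical pieces. You bound them by $O_{\pm\infty}(1/t)$ via integration by parts and then assert that this is ``negligible compared with $t^{-n/2}$ when $n\ge 2$''; the comparison goes the other way. One has $1/t=o\left(t^{-n/2}\right)$ if and only if $n<2$: for $n=2$ the non-critical contribution has exactly the size of the main term (so the lower bound $\abso{\widehat{\nu}(t)}\geq c\,t^{-n/2}$, hence $r\le \frac n2$, does not follow), and for $n\ge 3$ it dominates the main term (so even the upper bound $\abso{\widehat{\nu}(t)}\le C t^{-n/2}$, hence $r\ge\frac n2$, does not follow — you only get $r\ge 1$). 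Nor can you repair this by iterating the integration by parts: each further integration costs one derivative of the phase, and $\left<\xi\vert v(\cdot)\right>$ is only assumed $\mathcal{C}^2$, which is exactly why a single integration by parts, giving $O(1/t)$, is all that is available. As written, your argument is complete only for $n=1$.

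It is worth noting how the paper avoids (or rather hides) this issue: it attributes the stationary-phase remainder $O_{+\infty}\left(t^{-n}\right)$ to \emph{every} chart, i.e.\ it implicitly applies the quoted theorem also when the cutoff $\psi$ vanishes at the critical point, in which case the leading term $\psi\left(\varphi_U^{-1}(a)\right)t^{-n/2}$ is zero and the whole contribution is the remainder $O\left(t^{-n}\right)$, which is $o\left(t^{-n/2}\right)$ for every $n$. Whether that use of the theorem is fully justified (it requires extending the phase off the chart while keeping a unique non-degenerate critical point, and the $O(t^{-n})$ remainder under mere $\mathcal{C}^2$ regularity is itself a generous reading of stationary phase) is a defect of the paper's proof, but it is precisely the ingredient that makes the exponents close; your Green-formula substitute delivers only $O(1/t)$ and cannot. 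To fix your proposal you should either control the non-critical pieces the paper's way (amplitude vanishing at the critical point, so the contribution is the $O(t^{-n})$ remainder), or strengthen the regularity hypothesis so that repeated integration by parts pushes the non-critical contribution below $t^{-n/2}$.
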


\begin{proof}
Let $\xi\ne 0_{\mathbb{Z}^d}$.
Applying the stationary phase theorem on respectives charts $(U,\varphi_U)$ of $\mathcal{A}$ containing at most just one critical point $a\in U$ and test functions $\psi$ on $U$, we get 

\[
\begin{split}
\intInd{M}{e^{2i\pi t\langle\xi\vert v(x)\rangle}\psi(x)}{\widehat{\lambda}(x)}&=\intInd{\mathbb{R}^n}{e^{2i\pi t\langle\xi\vert v\left(\varphi_U^{-1}(x)\right)\rangle}\psi\left(\varphi_U^{-1}(x)\right)J(\varphi_U^{-1})(x)}{\lambda(x)}\\
&=\frac{\psi\left(\varphi_U^{-1}(a)\right)e^{i\frac{\pi}{4}\left(\sum_{\lambda\in \sigma(Hess(a))}sgn(\lambda)\right)}}{t^{\frac{n}{2}}\sqrt{\abso{det(Hess(a))}}}+O_{+\infty}\left(\frac{1}{t^n}\right).
\end{split}
\]
By partition of unity we obtain
\[{\exists C\in\mathbb{R},\intInd{M}{e^{2i\pi\langle\xi\vert v(x)\rangle}}{\widehat{\lambda}(x)}=\frac{C}{t^{\frac{n}{2}}}+O_{+\infty}\left(\frac{1}{t^n}\right)}.\]
\end{proof}
\begin{remark} Note that the result is consistent with the case $n=1$.
\end{remark}

We recall the definition of Damien Thomine \cite{DaTho} 
\begin{definition}[Anisotropic Sobolev space on $\mathbb{R}\times \mathbb{T}$]
    Let $s\geq 0$.
    Let $h : x\in \mathbb{R}\mapsto \sqrt{1+x^2}$. Let
    \[H^{s,0}(\mathbb{R}\times\mathbb{T}) := \left\{ f\in \mathbb{L}^2(\mathbb{R}\times\mathbb{T}) : \sum_{k\in\mathbb{Z}}\intInd{\mathbb{R}}{\abso{\widehat{f}(x,k)}^2h^{2s}(x)}{\lambda(x)}\in\mathbb{R}\right\}.\]
\end{definition}

\begin{proposition}
Consider $(\mathbb{R}\times\mathbb{T},\mu\otimes \lambda,(g_t)_{t\in \mathbb{R}})$ such that $r=r(\mu)>0$ and for $(x,y)\in \mathbb{R}\times\mathbb{T},g_t(x,y)=(x,y+tx)$.
Let $s>\frac{1}{2}$.
Let for $\varepsilon\in\left]0,\frac{1}{2s}\right[,q_\varepsilon := \min{\left\{s(1-\varepsilon),r(\mu)\right\}}$.
Then, we get for all $\varepsilon \in\left]0,\frac{1}{2s}\right[$, there exists $C_{s,\varepsilon}>0$ such that for all $t>0$, 
\[\abso{\mathbb{E}_{\mu\otimes\lambda}(Cov_t(f_1,f_2\vert \mathcal{I}))}\leq \frac{C_{s,\varepsilon}\norme{f_1}_{H^{s,0}(\mathbb{R}\times\mathbb{T})}\norme{f_2}_{H^{s,0}(\mathbb{R}\times\mathbb{T})}}{t^{q_\varepsilon}}
\]
and if $\gamma>0$ denotes the convergence order on $H^{s,0}(\mathbb{R}\times \mathbb{T})$, we get $\min{\left\{s-\frac{1}{2},r(\mu)\right\}}\leq \gamma$. Moreover if $supp(\mu)$ is compact, then $\gamma\leq r(\mu)$.
\end{proposition}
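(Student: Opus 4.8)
The plan is to transpose the discrete argument of Proposition~\ref{rajchdisc} to continuous time, turning the summation against $\widehat{\mu}$ into a convolution. Writing $b_k(x)=\int_{\mathbb{T}}f_1(x,y)e^{-2i\pi ky}\,d\lambda(y)$ and $c_k(x)=\int_{\mathbb{T}}f_2(x,y)e^{-2i\pi ky}\,d\lambda(y)$ for the fibrewise Fourier coefficients, I first integrate out the variable $y$. Since here $v(x)=x$, for $\mu$-almost every $x$ the only fibre frequency orthogonal to $v(x)$ is $k=0$, so Proposition~\ref{orthofour} identifies $\mathbb{E}_\mu(f_2\mid\mathcal{I})$ with the zeroth mode $c_0$; the invariant part therefore cancels exactly the $k=0$ term and
\[\mathbb{E}_{\mu\otimes\lambda}(Cov_t(f_1,f_2\mid\mathcal{I}))=\sum_{k\ne 0}\int_{\mathbb{R}}\overline{b_k(x)}\,c_k(x)\,e^{2i\pi ktx}\,d\mu(x).\]
For fixed $k\ne0$ put $\tau=kt$; by Fourier inversion in $x$ and Fubini against $\mu$ each term becomes the bilinear expression
\[I_k(t)=\int_{\mathbb{R}}\int_{\mathbb{R}}\overline{\widehat{b_k}(\omega_1)}\,\widehat{c_k}(\omega_2)\,\widehat{\mu}(\omega_2-\omega_1+\tau)\,d\omega_1\,d\omega_2,\]
where the Fourier data obey $\|b_k\|_{H^s}^2=\int_{\mathbb{R}}|\widehat{b_k}(\omega)|^2(1+\omega^2)^s\,d\omega$ and $\sum_k\|b_k\|_{H^s}^2=\|f_1\|_{H^{s,0}}^2$.

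The decisive point, which avoids the naive loss of a half-derivative, is a weighted bilinear estimate. From the elementary inequality $(1+v^2)\le 4(1+\omega_1^2)(1+(\omega_1-v)^2)$ and Cauchy--Schwarz one gets, for every $v$,
\[\int_{\mathbb{R}}|\widehat{b_k}(\omega_1)|\,|\widehat{c_k}(\omega_1-v)|\,d\omega_1\le\frac{4^{s/2}\,\|b_k\|_{H^s}\|c_k\|_{H^s}}{(1+v^2)^{s/2}}.\]
Changing variables to $v=\omega_1-\omega_2$ and integrating this against $|\widehat{\mu}(\tau-v)|$ reduces the whole term to a scalar convolution,
\[|I_k(t)|\le C\,\|b_k\|_{H^s}\|c_k\|_{H^s}\,K(\tau),\qquad K(\tau)=\int_{\mathbb{R}}\frac{|\widehat{\mu}(u)|}{(1+(\tau-u)^2)^{s/2}}\,du.\]

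It then remains to prove $K(\tau)\le C_\varepsilon|\tau|^{-q_\varepsilon}$ with $q_\varepsilon=\min\{s(1-\varepsilon),r\}$. I would split $K$ at $|u|=|\tau|/2$: on $|u|\le|\tau|/2$ the weight is $\lesssim|\tau|^{-s}$ while the Rajchman bound $|\widehat{\mu}(u)|\le\min\{1,C|u|^{-r}\}$ controls $\int|\widehat{\mu}|$, and on $|u|>|\tau|/2$ one has $|\widehat{\mu}(u)|\le C|\tau|^{-r}$ against the integrated weight. For $s>1$ this already yields the clean order $\min\{s,r\}$; the role of $\varepsilon$ (and of the constraint $\varepsilon<\frac{1}{2s}$) is precisely to absorb the boundary and logarithmic losses that arise when $\frac12<s\le1$ or when $r$ is small, where the weight $(1+u^2)^{-s/2}$ is not integrable and one is convolving two slowly decaying power laws. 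This estimate of $K(\tau)$ in the low-regularity, small-$r$ regime is the main obstacle; everything else is bookkeeping. Summing over $k$ is then routine: since $K(kt)\le C_\varepsilon(|k|t)^{-q_\varepsilon}$ and $q_\varepsilon>0$, a final Cauchy--Schwarz in $k$ together with $|k|^{-q_\varepsilon}\le1$ gives $\sum_{k\ne0}|k|^{-q_\varepsilon}\|b_k\|_{H^s}\|c_k\|_{H^s}\le\|f_1\|_{H^{s,0}}\|f_2\|_{H^{s,0}}$, which is the announced inequality.

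For the statements on the order $\gamma$, the lower bound is immediate from the family just obtained: one has $\gamma\ge q_\varepsilon$ for every $\varepsilon$, and letting $\varepsilon\uparrow\frac{1}{2s}$ gives $\gamma\ge\min\{s-\frac12,r\}$ since $s(1-\frac{1}{2s})=s-\frac12$ (letting instead $\varepsilon\downarrow0$ even yields $\gamma\ge\min\{s,r\}$). For the converse bound $\gamma\le r$ under $\mathrm{supp}(\mu)$ compact, I would argue as in Proposition~\ref{roofsp}: choose $\chi\in\mathcal{C}_c^\infty$ with $\chi\equiv1$ on $\mathrm{supp}(\mu)$ and take $f_1=f_2(x,y)=\chi(x)e^{2i\pi y}\in\mathcal{C}^\infty\cap H^{s,0}(\mathbb{R}\times\mathbb{T})$, for which $\mathbb{E}_{\mu\otimes\lambda}(Cov_t)=\int_{\mathbb{R}}e^{2i\pi tx}\,d\mu(x)=\widehat{\mu}(t)$. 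By the very definition of the Rajchman order $r=r(\mu)$ no estimate $|\widehat{\mu}(t)|\le Ct^{-\gamma}$ can hold for $\gamma>r$, so the correlations do not decay faster than $t^{-r}$; compactness of $\mathrm{supp}(\mu)$ is what makes this admissible test function available, since it lets $\chi$ be identically $1$ on the support so that the test correlation is exactly $\widehat{\mu}(t)$.
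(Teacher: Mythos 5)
Your reduction to the modes $I_k(t)$, the cancellation of the $k=0$ term, and both statements about $\gamma$ (in particular the compact-support test function $\chi(x)e^{2i\pi y}$ giving $\gamma\le r(\mu)$, which is more explicit than what the paper writes) are all in line with the paper. The genuine problem is the step you yourself flag as ``the main obstacle'': it is not bookkeeping, it is where the approach structurally fails. By applying the Peetre/Cauchy--Schwarz bound uniformly in $v$ \emph{before} integrating against $\widehat{\mu}$, you end up with $\abso{\widehat{\mu}}$ to the \emph{first} power against a weight of power $s$ only, i.e.\ $K(\tau)=\int_{\mathbb{R}}\abso{\widehat{\mu}(u)}\,(1+(\tau-u)^2)^{-s/2}\,du$. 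The hypotheses allow $\tfrac12<s\le 1$ and $r=r(\mu)$ arbitrarily small, and there are measures of order exactly $r$ with genuine two-sided decay $\abso{\widehat{\mu}(u)}\asymp\abso{u}^{-r}$ (e.g.\ $d\mu=c_r\abso{x}^{r-1}\mathds{1}_{[-1,1]}(x)\,dx$ for $0<r<\tfrac12$). For such $\mu$ the integrand of $K$ decays like $\abso{u}^{-(r+s)}$, so $K(\tau)\equiv+\infty$ whenever $r+s\le1$: no choice of $\varepsilon$ repairs a divergent integral, and the inequality $K(\tau)\le C_\varepsilon\abso{\tau}^{-q_\varepsilon}$ you defer is simply false there. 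Even when $r+s>1$ but $s<1$, your splitting only yields $K(\tau)\lesssim\abso{\tau}^{-(r+s-1)}$, and $r+s-1<r$ for every $s<1$, so the claimed rate $q_\varepsilon$ (which equals $r$ whenever $r\le s(1-\varepsilon)$) is out of reach of this reduction on the whole range $\tfrac12<s<1$, not just in the divergent regime.

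The paper avoids this by doing Cauchy--Schwarz in $(x,y)$ jointly with $\widehat{\mu}$ kept inside the kernel factor: what it must bound is $S(t)^2=\sup_{k}C_\mu^2\int_{\mathbb{R}^2}(1+\abso{kt-(x-y)})^{-2r}\,h^{-2s}(x)h^{-2s}(y)\,dx\,dy$, i.e.\ $\abso{\widehat{\mu}}^2$ (decay $2r$) against weights of power $2s>1$ in each variable; after $w=x-y$ this is a convolution of $(1+\abso{w})^{-2s}$ with $(1+\abso{\cdot})^{-2r}$, which converges for \emph{every} $r>0$ since $2s+2r>1$ automatically, and decays at the stated rate. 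Your route would have to be restructured in this quadratic way to work (retaining the $L^1$ bound on the inner correlation and interpolating still falls short of $q_\varepsilon$). Finally, your remark that letting $\varepsilon\downarrow 0$ ``even yields $\gamma\ge\min\{s,r\}$'' should have been a warning rather than a bonus: it contradicts the paper's own conclusion $\gamma\ge\min\{s-\tfrac12,r\}$, and it points to a typo in the statement --- the paper's argument needs $h^{-2s\varepsilon}$ integrable, i.e.\ $\varepsilon\in\left]\tfrac{1}{2s},1\right[$ rather than $\left]0,\tfrac{1}{2s}\right[$, and the supremum of $q_\varepsilon$ over that corrected range is exactly $\min\{s-\tfrac12,r\}$.
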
 

\begin{proof}
Let $s>\frac{1}{2}$ and $(f_1,f_2)\in (H^{s,0}(\mathbb{R}\times\mathbb{T}))^2$.
Then by Cauchy-Schwarz inequality and Parseval \begin{equation}\label{eq:CS_control}\begin{array}{ll}\abso{\mathbb{E}_{\mu\otimes\lambda}(Cov_t(f_1,f_2\vert \mathcal{I}))} &=\abso{\sum_{k\in\mathbb{Z}^*}\intInd{\mathbb{R}^2}{\overline{\widehat{f_1}}(u,k)\widehat{f_2}(v,k)\widehat{\mu}(kt-(u-v))}{\lambda(u,v)}}\\
&\leq S(t)\norme{f_1}_{H^{s,0}(\mathbb{R}\times\mathbb{T})}\norme{f_2}_{H^{s,0}(\mathbb{R}\times\mathbb{T})}\end{array}\end{equation}
with \[S(t) := \sup_{k\in \mathbb{Z}^*} C_\mu\left(\intInd{\mathbb{R}^2}{\frac{1}{(1+\abso{kt-(u-v)})^{2r(\mu)}h^{2s}(u)h^{2s}(v)}}{\lambda(u,v)}\right)^\frac{1}{2}.\]
We can consider $k=1$ considering $kt$ instead of $t$.
Let $D_1(t) := \left\{(u,v)\in \mathbb{R}^2 : \abso{t-(u-v)}\leq 2t\ and\ \abso{v}\leq \frac{t}{4}\right\}$, $D_2(t) := \left\{(u,v)\in\mathbb{R}^2 : \abso{t-(u-v)}\leq 2t\ and\ \abso{v}>\frac{t}{4}\right\}$ and $D_3(t) :=\left\{(u,v)\in\mathbb{R}^2 : \abso{t-(u-v)}> 2t\right\}$.

We get that for $(u,v)\in D_1(t)$
\begin{equation}
\label{eq:D_1}
    \abso{u}\geq \frac{3t}{4}\text{ and }h^{2s}(u)\geq \frac{16^s}{9^st^{2s}}
\end{equation}
and for $(u,v)\in D_2(t)$,
\begin{equation}
\label{eq:D_2}
    h^{2s}(v)> \frac{16^s}{t^{2s}}.
\end{equation}

Using that $s>\frac{1}{2}$, for all $\varepsilon\in \left]0,\frac{1}{2s}\right[$,
\begin{equation}\label{eq:control_h}
    \intInd{\mathbb{R}}{h^{-2s\varepsilon}(u)}{\lambda(u)}<+\infty
\end{equation}
and by \eqref{eq:D_1} and \eqref{eq:D_2}, for and
\begin{equation}\label{eq:int_1_control}\intInd{D_1(t)}{\frac{1}{(1+\abso{t-(u-v)})^{2r(\mu)}h^{2s}(u)h^{2s}(v)}}{\lambda(u,v)}\leq\frac{16^{s(1-\varepsilon)}}{{(3t)}^{2s(1-\varepsilon)}}\intInd{\mathbb{R}^2}{h^{-2s\varepsilon}(u)h^{-2s}(v)}{\lambda(u,v)},\end{equation}
\begin{equation}\label{eq:int_2_control}
    \intInd{D_2(t)}{\frac{1}{(1+\abso{t-(u-v)})^{2r(\mu)}h^{2s}(u)h^{2s}(v)}}{\lambda(u,v)}\leq\frac{16^{s(1-\varepsilon)}}{t^{2s(1-\varepsilon)}}\intInd{\mathbb{R}^2}{h^{-2s}(u)h^{-2s\varepsilon}(v)}{\lambda(u,v)}.
\end{equation}
Therefore, by definition of $D_3(t)$, \eqref{eq:int_1_control} and \eqref{eq:int_2_control},
\begin{equation}
    \intInd{\mathbb{R}^2}{\frac{1}{(1+\abso{t-(u-v)})^{2r(\mu)}h^{2s}(u)h^{2s}(v)}}{\lambda(u,v)}\leq \frac{D_{s,\varepsilon}^2}{t^{2q_\varepsilon}}\intInd{\mathbb{R}^2}{h^{-2s}(u)h^{-2s\varepsilon}(v)}{\lambda(u,v)}
\end{equation}
with $D_{s,\varepsilon} = \max{\{\frac{1}{2^{r(\mu)}},{4}^{s(1-\varepsilon)}\sqrt{2}\}}$ and $q_\varepsilon=\min{\{r(\mu),s(1-\varepsilon)\}}$.
Thus, \begin{equation}\label{eq:control_S}\forall \varepsilon \in\left]0,\frac{1}{2s}\right[,S(t)\leq \frac{C_{\mu}C_{s,\varepsilon}}{t^{q_\varepsilon}}\left(\intInd{\mathbb{R}}{h^{-2s}(u)}{\lambda(u)}\intInd{\mathbb{R}}{h^{-2s\varepsilon}(v)}{\lambda(v)}\right)^{\frac{1}{2}}.\end{equation}

By \eqref{eq:CS_control},\eqref{eq:control_h} and \eqref{eq:control_S},
\[\abso{\mathbb{E}_{\mu\otimes\lambda}(Cov_t(f_1,f_2\vert \mathcal{I}))}\leq\frac{C_{s,\varepsilon}}{t^{q_\varepsilon}}\norme{f_1}_{H^{s,0}(\mathbb{R}\times\mathbb{T})}\norme{f_2}_{H^{s,0}(\mathbb{R}\times\mathbb{T})}.\]
Moreover, for $\gamma>0$, the convergence order on $H^{s,0}(\mathbb{R}\times \mathbb{T})$, we get $\min{\left\{s-\frac{1}{2},r(\mu)\right\}}\leq \gamma$ and when $supp(\mu)$ is compact, $\gamma\leq r(\mu)$.
\end{proof}

\begin{remark}
    When $(f_1,f_2)\in (H^{s,0}(\mathbb{R}\times \mathbb{T}))^2$ with $s\geq \frac{r(\mu)+1}{2}$, we get that $\gamma=r(\mu)$ as optimal order of convergence when $supp(\mu)$ is compact.
\end{remark}

\begin{remark}
    For $s>\frac{1}{2}$, for all integer $k>s+\frac{3}{2}$, $\mathcal{C}^k_c(\mathbb{R}\times\mathbb{T})\subset H^{s,0}(\mathbb{R}\times \mathbb{T})$, and then, for $k>s+\frac{3}{2}$ the convergence order $\gamma>0$ for $\mathcal{C}^k_c(\mathbb{R}\times\mathbb{T})$ is such that $\gamma \in \left[\min{\left\{s-\frac{1}{2},r(\mu)\right\}},r(\mu)\right]$ when $supp(\mu)$ is compact.
\end{remark}

\begin{remark}
    When $\mu=\lambda_v$ with $v\in \mathcal{C}^\infty$ with a unique critical point of order $l-1$ with $l\geq 3$, then $\gamma=r(\mu)=\frac{1}{l}$.
\end{remark}

\section{Keplerian shear, Rajchman property and Diophantine approximation}

In this section we present an application of Keplerian shear (with speed estimates) to dynamical Borel Cantelli lemmas. The latter is linked to Diophantine approximation and we discuss the relations between the Rajchman property and Diophantine properties.

Let the probabilistic space $(\Omega,\mu)$.
The main application consists in readjusting the dynamic {Borel-Cantelli} theorem in the non-ergodic case.
We find an adaptation of {Sprindzuk}'s theorem inspired by the thesis of {Victoria Xing} \cite{VicXing}.
The Keplerian shear will ensure that for almost any $x$, there exists an infinity of integers $n$ satisfying $T^n(x)\in A_n$.

\begin{theorem}[Variable {Sprindzuk}]
\label{Sprinvar}
Let $(f_k)_{k\in\mathbb{N}^*}$ and $(g_k)_{k\in\mathbb{N}^*}$ be two sequences of measurable and non negative functions.
Let $(\varphi_k)_{k\in\mathbb{N}^*}$ a real sequence such that for all $k\in\mathbb{N}^*$, $0\leq g_k\leq \varphi_k\leq 1\,\mu-a.e$.
Let $\delta>1$ and $C>0$.
Suppose that for all $(m,n)\in\left(\mathbb{N}^*\right)^2$ satisfying $n\geq m$, 
\[\intInd{\Omega}{\left(\sum_{k=m}^nf_k(x)-g_k(x)\right)^2}{\mu(x)}\leq C\left(\sum_{k=m}^n\varphi_k\right)^\delta.\]
Then, for all $n\in\mathbb{N},$ for all $\varepsilon>0$,
\[\sum_{k=1}^n f_k = \sum_{k=1}^n g_k+O\left(\phi(n)^{\frac{\delta}{2}}\left(\log\left(\phi(n)\right)\right)^{1+\varepsilon}\right)\, \mu-a.e.\]
with $\phi(n)= \sum_{k=1}^n\varphi_k$.
\end{theorem}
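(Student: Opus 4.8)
The plan is to follow the classical route for Sprindzuk-type quasi-independence lemmas: establish the estimate first along a geometric subsequence in the $\phi$-scale, then fill in the gaps with a Rademacher--Menshov maximal inequality, the essential point being that the exponent $\delta>1$ is exactly what keeps the logarithmic loss down to a single power. Write $\Delta_n=\sum_{k=1}^n\bigl(f_k(x)-g_k(x)\bigr)$, so the hypothesis reads $\int_\Omega(\Delta_n-\Delta_{m-1})^2\,d\mu\le C\,(\phi(n)-\phi(m-1))^\delta$ for all $m\le n$. I may assume $\phi(n)\to+\infty$: otherwise the maximal inequality below, applied to the whole index range, already gives $\sup_n|\Delta_n|<\infty$ $\mu$-a.e., which is the claimed $O(1)$. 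Since $0\le\varphi_k\le1$, the function $\phi$ increases by steps of at most $1$, so setting $N_j=\min\{n:\phi(n)\ge 2^j\}$ yields $2^j\le\phi(N_j)<2^j+1$; in particular $\log\phi(n)\asymp j$ throughout each block $N_j\le n<N_{j+1}$, and the block mass is $\Phi_j:=\phi(N_{j+1})-\phi(N_j)\asymp 2^j$.

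For the subsequence, Chebyshev's inequality gives
\[
\mu\bigl(|\Delta_{N_j}|>\phi(N_j)^{\delta/2}j^{1+\varepsilon}\bigr)\le\frac{C\,\phi(N_j)^\delta}{\phi(N_j)^\delta\,j^{2(1+\varepsilon)}}=\frac{C}{j^{2(1+\varepsilon)}},
\]
which is summable, so by Borel--Cantelli $|\Delta_{N_j}|\le\phi(N_j)^{\delta/2}j^{1+\varepsilon}$ for all large $j$, $\mu$-a.e.

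The harder part is the control of the fluctuations inside a block, and this is the main obstacle. On $[N_j,N_{j+1})$ I would build a binary tree of index-subintervals that is \emph{balanced in the $\phi$-scale}: using $\varphi_k\le1$, each node of $\phi$-mass $m$ admits a split point whose two children have $\phi$-mass within $1$ of $m/2$, and I stop at leaves of mass $\lesssim1$. This tree is laminar and has depth $L_j\asymp\log_2\Phi_j\asymp j$ (it is here that the stepwise-bounded increments matter: the depth is $\log_2\Phi_j$, not $\log_2(N_{j+1}-N_j)$). By the Rademacher--Menshov decomposition, for every $n$ in the block $\Delta_n-\Delta_{N_j}$ is a sum of at most $L_j$ tree-increments, so Cauchy--Schwarz yields $\max_n(\Delta_n-\Delta_{N_j})^2\le L_j\sum_{I}(\Delta_{r(I)}-\Delta_{\ell(I)})^2$ over the nodes $I$. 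Integrating and invoking the hypothesis on each node,
\[
\int_\Omega\max_{N_j\le n<N_{j+1}}(\Delta_n-\Delta_{N_j})^2\,d\mu\le C\,L_j\sum_{I}(\Delta\phi_I)^\delta .
\]
Since the tree is $\phi$-balanced, level $\ell$ carries $\asymp 2^\ell$ nodes of mass $\asymp\Phi_j/2^\ell$, so $\sum_I(\Delta\phi_I)^\delta\asymp\Phi_j^\delta\sum_{\ell\ge0}2^{\ell(1-\delta)}$; this is where $\delta>1$ is indispensable, the geometric series converging to a constant $C_\delta$ and sparing us a second logarithm. Hence $\int_\Omega\max_n(\Delta_n-\Delta_{N_j})^2\,d\mu\le C'\,j\,\Phi_j^\delta\asymp C'\,j\,2^{j\delta}$.

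A final Chebyshev--Borel--Cantelli step closes the gaps:
\[
\mu\Bigl(\max_{N_j\le n<N_{j+1}}|\Delta_n-\Delta_{N_j}|>\Phi_j^{\delta/2}j^{1+\varepsilon}\Bigr)\le\frac{C'\,j\,\Phi_j^\delta}{\Phi_j^\delta\,j^{2(1+\varepsilon)}}=\frac{C'}{j^{1+2\varepsilon}},
\]
summable for every $\varepsilon>0$. Combining this with the subsequence bound, for $\mu$-a.e.\ $x$ and all large $n$, with $j$ the index of the block containing $n$, one gets $|\Delta_n|\le|\Delta_{N_j}|+|\Delta_n-\Delta_{N_j}|\lesssim\phi(n)^{\delta/2}(\log\phi(n))^{1+\varepsilon}$, since $\phi(N_j)\asymp\Phi_j\asymp\phi(n)$ and $j\asymp\log\phi(n)$ on the block. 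This is exactly the asserted estimate, and the whole argument turns on the two interlocking facts that $\varphi_k\le1$ makes the $\phi$-balanced tree constructible with depth $\log_2\Phi_j$, and that $\delta>1$ makes $\sum_I(\Delta\phi_I)^\delta$ summable, so that only the single logarithmic factor $(\log\phi(n))^{1+\varepsilon}$ survives.
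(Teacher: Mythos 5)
Your overall architecture --- dyadic blocks in the $\phi$-scale, an $L^2$ bound over a mass-balanced dyadic structure, Chebyshev plus Borel--Cantelli, with $\delta>1$ supplying the convergent geometric series over scales --- is the same mechanism as the paper's proof, which follows Sprindzuk's dyadic-interval bookkeeping. But there is a genuine gap at the finest scale. Your Rademacher--Menshov step asserts that for every $n$ in the block, $\Delta_n-\Delta_{N_j}$ is a sum of at most $L_j$ tree-increments; this is only true when $n$ is an endpoint of a tree node. You deliberately stop the tree at leaves of $\phi$-mass $\lesssim 1$ (you must, to keep the depth $\asymp j$ rather than $\log_2(N_{j+1}-N_j)$), but such a leaf can contain arbitrarily many indices: nothing in the hypotheses bounds the number of indices carrying a unit of $\phi$-mass, since the $\varphi_k$ may be arbitrarily small or even zero. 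For $n$ strictly inside a leaf $I=(\ell(I),r(I)]$, the quantity $\Delta_n-\Delta_{\ell(I)}$ is not a tree increment, is not controlled by your maximal inequality, and cannot be recovered by any purely $L^2$/Cauchy--Schwarz argument whose depth stays $\asymp j$.

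The missing ingredient is exactly the positivity hypotheses, which you never invoke (your proof nowhere uses $f_k\ge 0$, $g_k\ge 0$, or $g_k\le\varphi_k$ --- a warning sign, since these are what make the statement true at the fine scale). Inside a leaf, positivity closes the gap: for $\ell(I)<n\le r(I)$, since the $f_k$ are nonnegative and $0\le\sum_{k\in I}g_k\le\sum_{k\in I}\varphi_k\lesssim 1$, one has $-2\le\Delta_n-\Delta_{\ell(I)}\le\bigl(\Delta_{r(I)}-\Delta_{\ell(I)}\bigr)+2$, so a partial-leaf prefix costs at most a genuine tree increment plus an additive constant, which is harmless in the final estimate. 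This is precisely the device the paper uses: its subsequence $n_v$ has unit spacing in the $\phi$-scale, and the passage from $n_v$ to arbitrary $n$ is done not by a maximal inequality but by monotonicity of $\sum_k f_k$ together with $\sum_{k=n_v+1}^{n_{v+1}}g_k<2$. The same remark applies to your dismissal of the case $\sup_n\phi(n)<\infty$, where your tree would have unbounded depth and the whole-range maximal inequality you appeal to does not exist as stated. With this one repair your argument is correct and is in substance the paper's proof.
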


For the proof, we will elaborate on the proof of the original theorem in the book by {Sprindzuk} \cite{Sprdz}, page 45 formula (68).
\begin{proof}
Let $\delta>1$ and let us denote for $I\subset \mathbb{N}^*,\phi(I)=\sum_{k\in I}\varphi_k$.
By the fact that for all $ k\in\mathbb{N}^*$, $0\leq \varphi_k\leq 1$, we have that there exists $(n_v)_{v\in\mathbb{N}^*}\in{\mathbb{N}^*}^{\mathbb{N}^*}$ such that for all $v\in\mathbb{N}^*$,
\[\phi(n_v)<v\leq \phi(n_v+1).\]
We have also for all $v\in\mathbb{N}^*$,\[n_{v+1}\geq n_v+1\]
and for all $v\in\mathbb{N}^*$,\[\phi(n_v)+1<v+1\leq \phi(n_{v+1}+1).\]
Thus, for all  $u,v\in\mathbb{N}^*$,\[\left(u<v\Longrightarrow \intervEnt{n_u+1}{n_v}\ne\emptyset\right).\]
Let \[\begin{array}{lcl}
\sigma : &\mathcal{P}(\mathbb{N}^*)&\to \mathcal{P}(\mathbb{N}^*)\\
&I&\mapsto \left\{n_w\in\mathbb{N} : w\in I\right\}.\end{array}\]
Let for $r\in\mathbb{N}^*, s\in\intervEnt{0}{r}$ sets of parts \[J_{r,s} :=\left\{\intervEnt{i2^s+1}{(i+1)2^s}\in\mathcal{P}(\mathbb{N}^*) : i\in\intervEnt{0}{2^{r-s}-1}\right\}.\]
Let $r\in\mathbb{N}^*$ and $s\in\intervEnt{0}{r}.$
We notice that \[\union{I\in J_{r,s}}{\sigma(I)}=\intervEnt{1}{n_{2^r}}.\]
Let $i\in \intervEnt{0}{2^{r-s}-1}.$
We have that \[\phi\left(n_{i2^s}\right)<i2^s\leq \phi\left(n_{i2^s}+1\right)\leq \phi\left(n_{i2^s}\right)+1.\]
Therefore \[i2^s-1\leq \phi\left(n_{i2^s}\right)<i2^s,\]
\[\phi\left(n_{(i+1)2^s}\right)<(i+1)2^s\leq \phi\left(n_{(i+1)2^s}+1\right)\leq \phi\left(n_{(i+1)2^s}\right)+1\]
and \[(i+1)2^s-1\leq\phi\left(n_{(i+1)2^s}\right)<(i+1)2^s.\]
Thus, \[\phi\left(\sigma\left(\intervEnt{i2^s+1}{(i+1)2^s}\right)\right)=\phi\left(n_{(i+1)2^s}\right)-\phi\left(n_{i2^s}\right)\leq 2^s+1\leq 2^{s+1}.\]
Therefore, \[\sum_{I\in J_{r,s}}\left(\phi(\sigma(I))\right)^\delta\leq 2^\delta 2^{r+s(\delta-1)}.\]
Denote \[J_r :=\union{s\in \intervEnt{0}{r}}{J_{r,s}}.\]
Thus, we have \[\sum_{I\in J_{r}}\left(\phi(\sigma(I))\right)^\delta=\sum_{s=0}^r\sum_{I\in J_{r,s}}\left(\phi(\sigma(I))\right)^\delta\leq \frac{2^{2\delta-1} 2^{r\delta}}{2^{\delta-1}-1}.\]
Let \[\begin{array}{lcl}
h : & \mathbb{N}^*\times \Omega&\to \mathbb{R}\\
& (l,x)&\mapsto \sum_{I\in J_l}\left(\sum_{k\in I}f_k(x)-g_k(x)\right)^2.\end{array}\]
Therefore, \[\intInd{\Omega}{h(r,x)}{\mu(x)}\leq C\left(\sum_{I\in J_{r}}\phi(\sigma(I))\right)\leq C_1 2^{r\delta}\]
for some constant $C_1$.
Let $\varepsilon>0$.
By the {Markov} inequality,
\[\mu\left(h(r,X)\geq C_1r^{1+\varepsilon}2^{r\delta}\right)\leq r^{-(1+\varepsilon)}.\]
By {Borel-Cantelli},
\[\mu-a.e \,x\in \Omega,\exists r_x\in\mathbb{N}^*,\forall r\geq r_x,h(r,x)\leq C_1r^{1+\varepsilon}2^{r\delta}.\]
For $v\in\mathbb{N}^*$, we have that $\intervEnt{1}{v}$ can be split into a finite number $r_v$ of interval $J_r$ such that \[r_v\leq \lfloor{\log_2(v)}\rfloor+1.\]
Let $J(v)$ denote the set of these intervals.
We then get \[\sum_{k=1}^{n_v}f_k-g_k=\sum_{I\in J(v)}\left(\sum_{k\in I}f_k-g_k\right).\]
By the {Cauchy-Schwarz} inequality,
\[\mu-a.e\,x\in \Omega,\left(\sum_{k=1}^{n_v}f_k-g_k\right)^2\leq r_v\sum_{I\in J(v)}\left(\sum_{k\in I}f_k-g_k\right)^2=r_vh\left(r_v,x\right).\]
Thus
\[\mu-a.e\,x\in\Omega,\exists r_x\in\mathbb{N}^*,\forall r\geq r_x,\left(\sum_{k=1}^{n_v}f_k(x)-g_k(x)\right)^2\leq C_1r^{1+\varepsilon}2^{r\delta}\leq C_1\log_2^{2+\varepsilon}(v)v^{\delta}.\]
Therefore, \[\mu-a.e\,x\in\Omega,\exists r_x\in\mathbb{N}^*,\forall r\geq r_x,\abso{\sum_{k=1}^{n_v}f_k(x)-g_k(x)} \leq C_1^{\frac{1}{2}}\log_2^{1+\frac{\varepsilon}{2}}(v)v^{\frac{\delta}{2}}.\]
Let $n\in\mathbb{N}^*.$
Taking $v\in\mathbb{N}^*$ such that $n_v\leq n\leq n_{v+1}$ we get
 \[\mu-a.e\,x\in\Omega,\sum_{k=1}^{n_v}f_k(x)\leq \sum_{k=1}^{n}f_k(x)\leq \sum_{k=1}^{n_{v+1}}f_k(x).\]
Moreover, \[\mu-a.e\,x\in \Omega,0\leq \sum_{k=n_v}^{n_{v+1}}g_k(x)\leq \phi\left(n_{v+1}\right)-\phi\left(n_v\right).\]
In addition, \[v-1\leq\phi\left(n_v\right)<v\leq \phi\left(n_v+1\right)\leq\phi\left(n_{v+1}\right)<v+1.\]
Then \[0<\phi\left(n_{v+1}\right)-\phi\left(n_v\right)< 2,\]
and \[\mu-a.e\,x\in \Omega, 0\leq \sum_{k=n_v+1}^{n_{v+1}}g_k(x)< 2.\]
Therefore 
\[\mu-a.e\,x\in\Omega,\sum_{k=1}^n f_k(x)=\sum_{k=1}^{n}g_k(x)+O\left(\log_2^{1+\frac{\varepsilon}{2}}\left(\phi(n)\right)\left(\phi(n)\right)^{\frac{\delta}{2}}\right).\]
\end{proof}

\begin{theorem} [Dynamical {Borel-Cantelli} by Keplerian shear]\label{thm:dbcbyks}
Suppose that $(\Omega,T,\mu)$ is a discrete measure preserving dynamical system with Keplerian shear. Let $(A_n)$ be a sequence of measurable sets and note $S_{M,N} = \sum_{k=M}^N\mathds{1}_{A_k}\circ T^k$, $S_N=S_{0,N}$.

We suppose that \ref{1.a} or \ref{1.b} holds and \ref{2}
\begin{enumerate}
\item
\begin{enumerate}
\item\label{1.a}
There exists $\gamma>0,C>0$ such that  for all $(j,k)\in\mathbb{N}^2$ satisfying $k\ne j$, 
$$\abso{\mathbb{E}\left(Cov_{k-j}\left(\mathds{1}_{A_j},\mathds{1}_{A_k}\vert\mathcal{I}\right)\right)}\leq \frac{C}{\abso{k-j}^\gamma}$$
\smallbreak
\par\item\label{1.b}
There exists $\gamma>0,D>0$ and $\mathcal{B}\subset \mathbb{L}^2_\mu(\Omega)$, a {Banach} space such that  there exists $C>0$ such that for all $f,g\in \mathcal{B}$, for all $n\in\mathbb{N}$, $\abso{\mathbb{E}(Cov_n(f,g\vert\mathcal{I}))}\leq \frac{C\norme{f}_\mathcal{B}\norme{g}_\mathcal{B}}{n^{\gamma}}$ and for all $j\in\mathbb{N}$, $\norme{\mathds{1}_{A_j}}_\mathcal{B}\leq D$
\end{enumerate}
\item\label{2}
There exists $\beta>\max(\frac12,1-\frac\gamma2)$ such that $\liminf_{N\to\infty}\left({\frac{\mathbb{E}(S_N\vert \mathcal{I})}{N^\beta}}\right)>0\ \mu-a.s$.
\end{enumerate}
We have  \[\frac{S_N}{\mathbb{E}(S_N\vert \mathcal{I})}\overset{\mu-a.s}{\xrightarrow[N\to+\infty]{}}1.\]
\end{theorem}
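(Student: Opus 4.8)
The plan is to apply the Variable Sprindzuk theorem (Theorem~\ref{Sprinvar}) to the positive sequences $f_k := \mathds{1}_{A_k}\circ T^k$ and $g_k := \mathbb{E}_\mu(\mathds{1}_{A_k}\circ T^k\vert\mathcal{I})$, with the deterministic choice $\varphi_k := 1$, so that $\phi(n)=n$; since $0\le g_k\le 1$ a.s.\ the compatibility condition $0\le g_k\le\varphi_k\le 1$ holds. Before this I would reduce hypothesis~\ref{1.b} to~\ref{1.a}: inserting $\norme{\mathds{1}_{A_j}}_\mathcal{B}\le D$ into the Banach estimate immediately gives $\abso{\mathbb{E}(Cov_{k-j}(\mathds{1}_{A_j},\mathds{1}_{A_k}\vert\mathcal{I}))}\le CD^2/\abso{k-j}^\gamma$, which is exactly~\ref{1.a} with a new constant. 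So it suffices to argue under~\ref{1.a}.

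The technical heart is a variance identity. Writing $Y_k:=f_k-g_k$, I would show that for $j\le k$
\[
\int_\Omega Y_jY_k\,d\mu=\mathbb{E}_\mu\bigl(Cov_{k-j}(\mathds{1}_{A_j},\mathds{1}_{A_k}\vert\mathcal{I})\bigr).
\]
The two ingredients are that $\mathcal{I}$-measurable functions are $T$-invariant, whence $g_k=\mathbb{E}_\mu(\mathds{1}_{A_k}\vert\mathcal{I})\circ T^k=\mathbb{E}_\mu(\mathds{1}_{A_k}\vert\mathcal{I})$ a.e., and that $T$ preserves $\mu$. Pulling out the $\mathcal{I}$-measurable factor $g_j$ gives $\int g_jf_k\,d\mu=\int g_jg_k\,d\mu$ (and symmetrically), so $\int Y_jY_k\,d\mu=\int[\mathbb{E}_\mu(f_jf_k\vert\mathcal{I})-g_jg_k]\,d\mu$. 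Since $f_jf_k=[\mathds{1}_{A_j}\cdot(\mathds{1}_{A_k}\circ T^{k-j})]\circ T^j$, factoring $T^j$ out of the conditional expectation and using the invariance of $g_jg_k$ identifies the bracket with $Cov_{k-j}(\mathds{1}_{A_j},\mathds{1}_{A_k}\vert\mathcal{I})\circ T^j$; integrating and invoking measure preservation yields the claim. In particular the diagonal obeys $\int Y_k^2\,d\mu\le\int\mathds{1}_{A_k}\circ T^k\,d\mu\le 1$.

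With this identity, for $M\le N$ and $L:=N-M+1$ the bound~\ref{1.a} on the off-diagonal terms gives
\[
\int_\Omega\Bigl(\sum_{k=M}^N Y_k\Bigr)^2 d\mu\le L+2C\,L\sum_{d=1}^{L-1}d^{-\gamma}.
\]
A case split then fixes the Sprindzuk exponent $\delta$: if $\gamma<1$ the inner sum is $O(L^{1-\gamma})$, so the right-hand side is $\le C'L^{2-\gamma}$ and I take $\delta=2-\gamma>1$; if $\gamma\ge 1$ the inner sum is $O(\log L)$ (or $O(1)$), so the bound is $\le C'L^{1+\eta}$ for any $\eta>0$ and I take $\delta=1+\eta$. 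In both cases the hypothesis of Theorem~\ref{Sprinvar} holds with $\varphi_k=1$, and the theorem produces, $\mu$-a.e.\ and for every $\varepsilon>0$,
\[
S_N-\mathbb{E}(S_N\vert\mathcal{I})=O\!\left(N^{\delta/2}(\log N)^{1+\varepsilon}\right).
\]

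Finally I divide by $\mathbb{E}(S_N\vert\mathcal{I})$: by hypothesis~\ref{2} there is $c>0$ with $\mathbb{E}(S_N\vert\mathcal{I})\ge cN^\beta$ for large $N$ ($\mu$-a.s.), so
\[
\left\vert\frac{S_N}{\mathbb{E}(S_N\vert\mathcal{I})}-1\right\vert\le\frac{O(N^{\delta/2}(\log N)^{1+\varepsilon})}{cN^\beta}\xrightarrow[N\to+\infty]{}0,
\]
because $\delta/2<\beta$: indeed $\delta/2=1-\gamma/2<\beta$ when $\gamma<1$, while for $\gamma\ge 1$ one picks $\eta$ with $\tfrac12<\delta/2=(1+\eta)/2<\beta$, possible since $\beta>\tfrac12$. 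This is precisely the threshold $\beta>\max(\tfrac12,1-\tfrac\gamma2)$. I expect the variance identity to be the main obstacle: producing the conditional covariance cleanly relies on the $T$-invariance of $\mathcal{I}$-measurable functions combined with measure preservation, after which the block estimate and the matching of $\delta$ to the stated threshold on $\beta$ are routine.
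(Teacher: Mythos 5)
Your proposal is correct and follows essentially the same route as the paper: reduce \ref{1.b} to \ref{1.a}, expand the square of $S_{M,N}-\mathbb{E}(S_{M,N}\vert\mathcal{I})$ as a double sum whose cross terms are exactly $\mathbb{E}_\mu(Cov_{k-j}(\mathds{1}_{A_j},\mathds{1}_{A_k}\vert\mathcal{I}))$, apply the Variable Sprindzuk theorem with $\varphi_k=1$, and divide using hypothesis \ref{2}. The only (harmless) cosmetic differences are that the paper disposes of $\gamma\geq 1$ by assuming $\gamma\in(0,1)$ without loss of generality where you use a case split with $\delta=1+\eta$, and that you write out the variance identity (via invariance of $\mathcal{I}$-measurable functions and measure preservation) which the paper leaves implicit.
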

\begin{remark}
We emphasize that in contrast to usual dynamical Borel Cantelli lemmas, the evolution of the sum $S_N$ is only compared to the  conditional expectation $\mathbb{E}(S_N\vert \mathcal{I})$, instead of the classical expectation $E(S_N)$.
\end{remark}
\begin{proof}
Without loss of generality we assume that $\gamma\in(0,1)$. Note that \ref{1.b} implies \ref{1.a} so \ref{1.a} holds in both cases.

By expansion of the square as a  double sum and the series-integral comparison criterion, we get $\forall (M,N)\in\mathbb{N}^2$ s.t. $M<N$,
\[
\begin{split}
\norme{S_{M,N}-\mathbb{E}\left(S_{M,N}\vert \mathcal{I}\right)}_{2}^2&\leq  \mathbb{E}(S_{M,N})+2C\sum_{l=1}^{N-M+1} \frac{(N-M+1)}{l^\gamma}\\
&\le 
\mathbb{E}(S_{M,N}) + 2C (N-M+1)^{2-\gamma}.
\end{split}
\]
By setting $\delta = 2-\gamma$, and noticing that $E(S_{M,N})\le N-M$ we get that there exists $D>0$ satisfying
\[\forall N\in\mathbb{N},\norme{S_{M,N}-\mathbb{E}\left(S_{M,N}\vert \mathcal{I}\right)}_{2}^2\leq D(N-M+1)^{\delta}.\]
By variable Sprindzuk Theorem \ref{Sprinvar} with $\varphi_k=1$ for all $k$, we get \[S_N-\mathbb{E}(S_N\vert \mathcal{I})\overset{a.s}{=}  O(N^{\frac{\delta}{2}}\log^{1+\varepsilon}(N)).\]
Then 
\[\frac{S_N}{\mathbb{E}(S_N\vert \mathcal{I})}
=1+O\left(\frac{N^{\frac\delta2}\log^{1+\varepsilon}(N)}{N^\beta}\right)
\overset{a.s}{\xrightarrow[N\to+\infty]{}}1.\]
\end{proof}

\begin{exemple}
Let us place ourselves in the case of $(\mathbb{T}^2,\lambda\otimes\lambda,T)$ with \[Mat(T)=\begin{pmatrix}1 & 0\\ 1 & 1\end{pmatrix}.\]
Consider a sequence $(b_n)\in\mathbb{T}^\mathbb{N}$ and let \[A_n=\mathbb{T}\times \left[b_n-\frac{1}{n^p},b_n+\frac{1}{n^p}\right]\] with $0<p<\frac{1}{2}.$
We get that $\forall N\in\mathbb{N}^*$,
\[\mathbb{E}(S_N\vert \mathcal{I})=E(S_N)=2\sum_{k=1}^N\frac{1}{k^p}\sim \frac{2}{1-p}N^{1-p}.\]
Let $s:=\frac12$ and $n\in\mathbb{N}^*$. By direct computation of the {Fourier} coefficients of $1_{A_n}$, we see that \[\mathds{1}_{A_n}\in H^{s,0}(\mathbb{T}^2)\]
since \[\norme{\mathds{1}_{A_n}}_{H^{s,0}(\mathbb{T}^2)}^2\leq \frac{1}{\pi^2}\zeta(2)+\frac{2}{n^p}\leq  3.\]
By the decorrelation estimates in \cite{DaTho}, $\forall (k,j)\in\mathbb{N}^2$,
\[\mathbb{E}\left(Cov_{k-j}\left(\mathds{1}_{A_k},\mathds{1}_{A_j}\vert\mathcal{I}\right)\right)\leq\frac{4^s 3^2}{\abso{k-j}^{2s}}.\]
The hypotheses are then brought together, applying Theorem \ref{thm:dbcbyks} with $\gamma=1$ and $\beta\in(\frac12,1-p)$, we get 
\[S_N\sim \frac{2}{1-p}N^{1-p}\quad{\mu-a.s}.\]
\end{exemple}

In the case of any measure of {Rajchman}, we have as an application a proposition based on Theorem 4.2 page 551 of {Athreya} \cite{Atry} and also the theorem of {Kurzweil} (\cite{Kurz} and \cite{Tsen}, Theorem 1.3 page 3) concerning Diophantine numbers.
To introduce this theorem, we will also have to recall the Shrinking Target Property (STP) and its monotone version (MSTP) of a dynamical system.

\begin{definition}[Shrinking Target Properties]

A discrete dynamical system $(\Omega,\mu,T)$ is called STP if for all sequence of balls $(B_n)_{n\in\mathbb{N}}$ of radius $r_n>0$ tending to $0$ and satisfying \[\sum_{n\in\mathbb{N}}\mu(B_n)=+\infty\text{ we have } \mu\left(\overline{\underset{n\to+\infty}{\lim}}T^{-n}(B_n)\right)=1.\]

The MSTP property is defined in the same way, assuming in addition that the sequence $r_n$ is monotone.
\end{definition}

Let us now recall {Kurzweil}'s theorem in dimension $1$.

\begin{theorem}[{Kurzweil-Tseng}]

\label{kurztseng}

Consider the dynamical system $(\mathbb{T},\lambda,T_\alpha)$ with $\alpha\in\mathbb{T}$ and $ T_\alpha : x\mapsto x+\alpha$.
Then the dynamical system is $s-$MSTP if and only if $\alpha$ is $s$-Diophantine (i.e., $\alpha\in\mathcal{D}io(s)$, see definition \ref{def:s_diophante}).
\end{theorem}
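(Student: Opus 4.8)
The plan is to reduce the shrinking target statement to an inhomogeneous Diophantine approximation problem and then to control the distribution of the orbit $\{n\alpha\}$ through the continued fraction expansion of $\alpha$. Since $T_\alpha^{-n}(B(y,r_n)) = B(y-n\alpha, r_n)$, for a monotone sequence $r_n\downarrow 0$ with $\sum_n 2r_n = \sum_n \lambda(B(y,r_n)) = +\infty$ the set $\limsup_n T_\alpha^{-n}(B(y,r_n))$ is exactly $\{x\in\mathbb{T} : \|x-y+n\alpha\| < r_n \text{ for infinitely many } n\}$, where $\|\cdot\|$ is the distance to the nearest integer. Thus the $s$-MSTP (the monotone shrinking target property at the polynomial rate governed by $s$) is equivalent to asserting that this inhomogeneous $\limsup$ set has full Lebesgue measure; by translation invariance of $\lambda$ one may take $y=0$. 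This is the form in which both Kurzweil's \cite{Kurz} and Tseng's \cite{Tsen} arguments operate, and it is the form I would establish.

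For the direction $\alpha\in\mathcal{D}_i(s)\Rightarrow s$-MSTP, I would run a second moment (quasi-independence) argument. Writing $B_n = B(n\alpha, r_n)$, one always has $\sum_n \lambda(B_n) = +\infty$; the point is to control the overlaps. Now $\lambda(B_m\cap B_n)\neq 0$ forces $\|(m-n)\alpha\| < r_m+r_n$, and the Diophantine hypothesis $\|q\alpha\|\ge C/q^s$ bounds, for each fixed $n$ and each dyadic range of $|m-n|$, the number of indices $m$ that can contribute, i.e.\ it prevents the orbit points $\{k\alpha\}$ from clustering faster than the rate $s$ allows. Organising indices by the continued fraction denominators and using the gap structure of $\{n\alpha\}$ given by the three-distance (Steinhaus) theorem, I would show the sharp quasi-independence $\sum_{m,n\le N}\lambda(B_m\cap B_n) = (1+o(1))\bigl(\sum_{n\le N}\lambda(B_n)\bigr)^2$; the Chung--Erdős divergence Borel--Cantelli inequality then yields $\lambda(\limsup_n B_n)=1$, which is the MSTP at rate $s$.

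For the converse $\alpha\notin\mathcal{D}_i(s)\Rightarrow \neg\,(s\text{-MSTP})$, I would argue by contraposition and construct an explicit monotone target that fails. If $\alpha$ is not $s$-Diophantine then $\liminf_q q^s\|q\alpha\|=0$, witnessed along a sequence of convergent denominators $q_k\to\infty$; near such a scale the orbit $\{0,\alpha,\dots,(q_{k+1}-1)\alpha\}$ is, by the three-distance theorem, concentrated on a union of very short arcs and misses a set of definite measure. Choosing $(r_n)$ monotone, decaying at the critical rate so that $\sum_n r_n = +\infty$ but small enough on the blocks $[q_k,q_{k+1})$ dictated by these convergents, one arranges that $\limsup_n B_n$ omits a fixed positive-measure set, so the $\limsup$ set does not have full measure and the MSTP fails. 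The delicate point is to keep $(r_n)$ \emph{monotone} while synchronising its decay with the possibly sparse and irregular sequence of good approximations $q_k$; this is precisely where Tseng's refinement of Kurzweil's construction is needed.

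The main obstacle is the overlap estimate in the forward direction: converting the arithmetic input $\|q\alpha\|\ge C/q^s$ into a usable upper bound on $\sum_{m\neq n\le N}\lambda(B_m\cap B_n)$ that is $O\bigl((\sum_{n\le N}\lambda(B_n))^2\bigr)$ with constant one, uniformly in $N$. This forces one to exploit the rigid gap structure of $\{n\alpha\}$ rather than any independence, and it is the monotonicity of $r_n$ that makes the resulting block sums telescope correctly. As a shortcut, both directions may be packaged by quoting Theorem~4.2 of Athreya \cite{Atry} together with \cite{Kurz} and \cite{Tsen}, which state exactly this equivalence between the $s$-MSTP for $T_\alpha$ and the $s$-Diophantine condition on $\alpha$.
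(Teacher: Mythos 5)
First, a point of comparison: the paper does not prove Theorem~\ref{kurztseng} at all --- it is recalled from the literature, the ``proof'' being the citation of Kurzweil \cite{Kurz} and Tseng \cite{Tsen} (Theorem~1.3 there). So your closing shortcut --- packaging both directions by quoting \cite{Atry}, \cite{Kurz} and \cite{Tsen} --- is exactly the paper's route, and on that score there is nothing to compare.

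The substantive problem is that your sketched direct proof rests on the wrong divergence hypothesis, and with that hypothesis the statement you are attempting to prove is false for $s>1$. Throughout (in the reduction, in the forward direction, and in the converse construction) you quantify over monotone sequences with $\sum_n r_n=\infty$, i.e.\ $\sum_n\lambda(B_n)=\infty$. That is the ordinary MSTP, and Kurzweil's theorem characterizes it by ``$\alpha$ badly approximable'', i.e.\ $\alpha\in\mathcal{D}_i(1)$ --- not $\alpha\in\mathcal{D}_i(s)$. The $s$-MSTP of the statement, as in Tseng's Theorem~1.3 and as used in this paper (see the remark following Theorem~\ref{thm:dbcbyrp}, where the consistency check is the divergence of $\sum_k\lambda(A_k)^s$, and Proposition~\ref{kurzman}), quantifies over monotone sequences with $\sum_n\lambda(B_n)^s=\infty$. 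Concretely, take $s>1$ and $\alpha\in\mathcal{D}_i(s)\setminus\mathcal{D}_i(1)$ (an irrational of type strictly between $1$ and $s$): by Kurzweil's theorem there exists a monotone sequence with $\sum_n r_n=\infty$ whose $\limsup$ set does \emph{not} have full measure, so the sharp quasi-independence $\sum_{m,n\le N}\lambda(B_m\cap B_n)=(1+o(1))\bigl(\sum_{n\le N}\lambda(B_n)\bigr)^2$ that you propose to derive from $\Vert q\alpha\Vert\ge C/q^s$ cannot hold in the generality you claim. The exponent $s$ must enter through the summability condition on the radii; it is precisely this interplay that makes the overlap combinatorics close, and it is invisible in your reduction.

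Two secondary points. Even with the correct hypothesis $\sum_n r_n^s=\infty$, obtaining the Chung--Erd\H{o}s ratio with constant exactly $1$ is more than the arithmetic input gives directly; the known arguments get bounded quasi-independence locally (via the continued-fraction/three-distance structure) and then upgrade positive measure of the $\limsup$ set to full measure by a density-point or covering argument --- a step absent from your outline. Your converse direction (synchronizing a monotone radius sequence with denominators witnessing $\liminf_q q^s\Vert q\alpha\Vert=0$ so that the orbit clusters on few short arcs) is the right idea and is essentially Kurzweil's construction, but it too must be run so that $\sum_n r_n^s=\infty$ while the $\limsup$ set avoids a set of positive measure; with your condition $\sum_n r_n=\infty$ it would only disprove the $1$-MSTP.
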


We also deduce with the documents of {Chaika} \cite{ChaiKon} the following theorem.

\begin{theorem}[{Kurzweil}]
\label{kurzmstp}
For $\lambda-a.e \,\alpha\in [0,1[,(\mathbb{T},\lambda, T_\alpha)\, is \, MSTP$.
\end{theorem}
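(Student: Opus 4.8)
The plan is to reduce the monotone shrinking target property to a Diophantine condition via the Kurzweil--Tseng characterisation (Theorem~\ref{kurztseng}), and then to check that this condition holds for Lebesgue-almost every $\alpha$ using the metric information on the Diophantine exponent already recorded in Proposition~\ref{diophraj}. The argument is essentially a bookkeeping of three ingredients: the a.e. value of $Dio$, the equivalence $s\text{-MSTP}\iff s$-diophantine, and Chaika's extension to the full monotone scale.

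First I would fix the Diophantine input. Since $\lambda$ is absolutely continuous, the last assertion of Proposition~\ref{diophraj} gives $Dio(\alpha)=1$ for $\lambda$-almost every $\alpha\in[0,1)$. By definition of $Dio$ as $\inf A$, this means that for such $\alpha$ one has $\alpha\in\mathcal{D}_i(s)$ for every $s>1$; equivalently $\alpha$ is $s$-diophantine for each rational $s>1$, and the countable intersection over $s=1+\frac1k$, $k\in\mathbb{N}^*$, remains a full-measure set (here I use that $\mathcal{D}_i(s)\subset\mathcal{D}_i(s')$ whenever $s\le s'$, which is immediate since $q\ge1$ forces $q^{-s}\ge q^{-s'}$). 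If one prefers a self-contained justification instead of citing Proposition~\ref{diophraj}, the same conclusion follows from a direct Borel--Cantelli estimate: for fixed $s>1$, the set of $\alpha$ admitting infinitely many $(p,q)\in\mathbb{Z}\times\mathbb{N}^*$ with $|q\alpha-p|<q^{-s}$ has measure zero, because the set of bad $\alpha$ at level $q$ is covered by $q+1$ intervals of length $2q^{-s-1}$ and $\sum_q (q+1)\,2q^{-s-1}<\infty$.

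Next I would apply the equivalence. By Theorem~\ref{kurztseng}, the condition $\alpha\in\mathcal{D}_i(s)$ is equivalent to $(\mathbb{T},\lambda,T_\alpha)$ being $s$-MSTP. Combining this with the previous step, for $\lambda$-almost every $\alpha$ the rotation $T_\alpha$ is $s$-MSTP simultaneously for every $s>1$.

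The last and hardest step is to upgrade ``$s$-MSTP for all $s>1$'' to the full, unparametrised MSTP, that is, to the conclusion $\mu\left(\limsup_{n\to+\infty} T_\alpha^{-n}(B_n)\right)=1$ for \emph{every} sequence of balls with monotone radii $r_n\downarrow0$ and $\sum_n\mu(B_n)=+\infty$. The polynomial-rate targets captured by the parameter $s$ do not a priori exhaust all admissible monotone sequences, and the borderline sequences (such as $r_n\asymp 1/n$) are precisely the delicate ones. Passing from the scale-$s$ target problems to an arbitrary monotone shrinking target is exactly the content of Chaika's analysis, so here I would invoke \cite{ChaiKon}. I expect this bridging step to be the main obstacle, since it is where the genuine extension of Kurzweil's theorem lies; by contrast, the Diophantine and measure-theoretic parts above are routine.
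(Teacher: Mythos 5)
Your first three steps are correct: $\lambda$-a.e.\ $\alpha$ satisfies $Dio(\alpha)=1$ (by Proposition~\ref{diophraj} or by your direct Borel--Cantelli estimate), hence $\alpha\in\mathcal{D}_i(s)$ for every $s>1$, and Theorem~\ref{kurztseng} converts this into the $s$-MSTP for every $s>1$. The genuine gap is your step 4, and it is not a missing citation but a missing (in fact false) implication: no argument can pass from ``$s$-MSTP for all $s>1$'' to MSTP, because that implication fails for individual $\alpha$. The $s$-scale only controls target sequences with $\sum_n\lambda(B_n)^s=+\infty$, whereas MSTP must handle every monotone sequence with $\sum_n\lambda(B_n)=+\infty$; a sequence such as $r_n=1/(n\log n)$ satisfies the latter but none of the former, and for such borderline sequences the answer depends on the whole continued fraction expansion of $\alpha$, not on its Diophantine exponent. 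Concretely, take $\alpha$ whose partial quotients satisfy $a_{k+1}\approx 2^{\sqrt{k}}$, so that $\log q_k\approx k^{3/2}$: then $Dio(\alpha)=1$, so this $\alpha$ has the $s$-MSTP for every $s>1$; yet for $r_n=1/(n\log n)$ one checks that the union of the targets with indices $n\in[q_k,q_{k+1})$ has measure $O\bigl(\log k\cdot k^{-3/2}\bigr)$ (the orbit points in that range form $q_k$ clusters of points spaced $\approx 1/q_{k+1}$, which truncates the effective radii at scale $1/q_{k+1}$), and since these bounds are summable in $k$, Borel--Cantelli gives a limsup set of measure zero: MSTP fails for this $\alpha$. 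Thus the full-measure set on which Theorem~\ref{kurzmstp} must be established cannot be reached through the $s$-Diophantine conditions at all, and steps 1--3, though correct, contribute nothing toward the conclusion.

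Consequently, deferring the last step to \cite{ChaiKon} does not complete your proof; it replaces it. What Kurzweil \cite{Kurz} (and the quantitative refinements of Chaika--Constantine) actually prove is the a.e.\ statement directly, on a full-measure set defined by finer metric properties of the continued fraction expansion (a.e.\ polynomial-in-$k$ growth of the partial quotients, via Borel--Bernstein/Khinchin--L\'evy), which is strictly stronger information than $Dio(\alpha)=1$. Note that the paper itself gives no proof of Theorem~\ref{kurzmstp}: it is recalled as Kurzweil's classical theorem, deduced from the cited literature. So judged as a proof, your proposal reduces, at its only nontrivial point, to invoking the very theorem being proved.
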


In the case of singular continuous {Rajchman} measures, we will have a weakening of the Theorem \ref{kurzmstp} of {Kurzweil}.
First, we present a dynamical Borel Cantelli for Rajchman measures. As in Theorem~\ref{thm:dbcbyks}, it is based on covariance estimates. Here they do not rely on the Keplerian shear but are  obtained directly from the Rajchman property. 

\begin{theorem}[Dynamical Borel-Cantelli for Rajchman measures]\label{thm:dbcbyrp}
Consider a Rajchman measure $\mu$ on $\mathbb{T}$ of order
 $0\le r(\mu)\le\frac{1}{2}$. Let $T$ be the transvection on $\mathbb{T}^2$ defined by $T(x,y)=(x,x+y)$.
Let $C>0$ and $s>\frac{1}{r(\mu)}-1$. 
We have \[\frac{S_N}{\mathbb{E}(S_N)}\xrightarrow[N\to+\infty]{\mu\otimes \lambda-a.e} 1\]
where
\[S_N=\sum_{k=1}^N \mathds{1}_{A_k}\circ T^k, \quad\text{and}\quad A_n = \mathbb{T}\times \left]z_n-\frac{C}{n^{\frac{1}{s}}},z_n+\frac{C}{n^{\frac{1}{s}}}\right[.\]
\end{theorem}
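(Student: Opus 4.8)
The plan is to reduce the statement to a quantitative dynamical Borel--Cantelli estimate of Sprindzuk type (Theorem~\ref{Sprinvar}), with the whole difficulty concentrated in a variance estimate extracted from the Rajchman decay of $\widehat\mu$ by Fourier analysis.

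First I would fix notation and settle the normalisation. Writing $I_k=\,]z_k-Ck^{-1/s},z_k+Ck^{-1/s}[\,$ and $\ell_k=\lambda(I_k)=2Ck^{-1/s}$, and using $T^k(x,y)=(x,y+kx)$, one has $\mathds1_{A_k}\circ T^k(x,y)=\mathds1_{I_k}(y+kx)$, so by Fubini $\mathbb{E}_{\mu\otimes\lambda}(\mathds1_{A_k}\circ T^k)=\ell_k$ and $\mathbb{E}(S_N)=\sum_{k=1}^N\ell_k$. Since $s>\tfrac1{r(\mu)}-1\ge1$ we have $1/s<1$, hence $\mathbb{E}(S_N)\sim\frac{2Cs}{s-1}N^{1-1/s}\to\infty$. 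Because $\mu$ is Rajchman it is non-atomic, so $\mu(\mathbb{Q})=0$; for $\mu$-a.e.\ (irrational) $x$ the only Fourier mode orthogonal to the velocity $v(x)=x$ is $0$, so by the orthogonality characterisation of $\mathcal I$ in the discrete case the invariant functions depend on $x$ alone and $\mathbb{E}(\mathds1_{A_k}\vert\mathcal I)=\ell_k$ a.s. In particular $\mathbb{E}(S_N\vert\mathcal I)=\mathbb{E}(S_N)$ a.s., which is why the stated ratio uses $\mathbb{E}(S_N)$.

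The heart of the proof is the variance estimate. Expanding $\mathds1_{I_k}(y+kx)-\ell_k$ in Fourier series and integrating first in $y$ (orthogonality) then in $x$, one gets, for $M\le N$,
\[
\big\|S_{M,N}-\mathbb{E}(S_{M,N})\big\|_2^2=\sum_{m\neq0}\sum_{j,k=M}^N \widehat{\mathds1_{I_k}}(m)\,\overline{\widehat{\mathds1_{I_j}}(m)}\;\widehat\mu\big(m(k-j)\big).
\]
I would bound this using $|\widehat{\mathds1_{I_k}}(m)|\le\min(\ell_k,\tfrac1{\pi|m|})$ together with $|\widehat\mu(n)|\le C_\mu|n|^{-\rho}$, valid for any Rajchman speed $\rho<r(\mu)$. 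The diagonal $j=k$ contributes at most $\sum_k\ell_k$. For the off-diagonal part the decisive feature is the summation over the Fourier modes $m$: the modes that matter range up to $|m|\sim N^{1/s}$ (the scale at which $\ell_k$ and $1/|m|$ cross), and carrying out the three-fold sum over $(m,j,k)$ produces a bound of order $N^{\,2-\frac1s-\rho(1+\frac1s)}$. The exponent $2-\frac1s-\rho(1+\frac1s)$ is $<2-\frac2s$ exactly when $\rho(s+1)>1$, i.e.\ $\rho>\frac1{s+1}$; letting $\rho\uparrow r(\mu)$ this is precisely the hypothesis $s>\frac1{r(\mu)}-1$. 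This is where the Diophantine-type threshold enters, and it is the main obstacle: the joint sum over modes and time-lags must be carried out carefully and uniformly in $M,N$ while tracking the shrinking sizes $\ell_k$.

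Finally I would feed this into Theorem~\ref{Sprinvar}. Taking $f_k=\mathds1_{A_k}\circ T^k$, $g_k=\ell_k$ and the weights $\varphi_k=\ell_k^{1/\delta}$ (so that $g_k\le\varphi_k\le1$, and crucially the single-term case $\mathrm{Var}(\mathds1_{A_k}\circ T^k)\le\ell_k=\varphi_k^{\,\delta}$ is respected, curing the short-interval obstruction), the estimate above gives $\|S_{M,N}-\mathbb{E}(S_{M,N})\|_2^2\le C\big(\sum_{k=M}^N\varphi_k\big)^{\delta}$ for a suitable $\delta\in(1,2-\tfrac1s)$ as soon as $s>\frac1{r(\mu)}-1$. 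Sprindzuk's theorem then yields $S_N=\mathbb{E}(S_N)+O\big(\phi(N)^{\delta/2}(\log\phi(N))^{1+\varepsilon}\big)$ a.s.\ with $\phi(N)=\sum_{k\le N}\varphi_k\sim N^{1-1/(s\delta)}$; since the error is $O\big(N^{(\delta-1/s)/2}\log^{1+\varepsilon}N\big)=o(N^{1-1/s})=o(\mathbb{E}(S_N))$, we conclude $S_N/\mathbb{E}(S_N)\to1$ $\mu\otimes\lambda$-a.s.
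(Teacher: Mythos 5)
Your proposal is correct and follows essentially the same route as the paper's proof: expand $S_{M,N}-\mathbb{E}(S_{M,N})$ in Fourier series along the fiber, integrate out $y$ by orthogonality so that only the terms $\widehat{\mathds{1}_{I_k}}(m)\overline{\widehat{\mathds{1}_{I_j}}(m)}\,\widehat{\mu}(m(k-j))$ survive, invoke the polynomial Rajchman decay, and close with the variable Sprindzuk theorem (Theorem \ref{Sprinvar}); your exponent $2-\tfrac1s-\rho(1+\tfrac1s)$ and threshold $\rho(s+1)>1$ coincide exactly with the paper's $2-r(\mu)-p(1+r(\mu))$ and $p<r(\mu)/(1-r(\mu))$ with $p=1/s$, and your preliminary remark that $\mathbb{E}(S_N\vert\mathcal I)=\mathbb{E}(S_N)$ a.s.\ (Rajchman implies non-atomic) makes explicit something the paper leaves implicit. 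Two differences in execution deserve mention, both in your favor. First, you bound $|\widehat\mu|$ with a Rajchman speed $\rho<r(\mu)$ and let $\rho\uparrow r(\mu)$ only at the end; the paper uses the exponent $r(\mu)$ itself, which, being a supremum of speeds, need not be attained. Second, the paper feeds Sprindzuk the unit weights $\varphi_k\equiv1$ with $\delta=2-r(\mu)-p(1+r(\mu))$; that application tacitly needs $\delta>1$, and since the diagonal term is only $O\big((n-m+1)^{1-p}\big)$ the resulting error $N^{\delta/2}$ (or $N^{1/2+\epsilon}$ when $\delta\le1$) beats $\mathbb{E}(S_N)\sim cN^{1-p}$ only when $p<\tfrac12$; for $r(\mu)>\tfrac13$ the hypothesis allows $s\le2$, i.e.\ $p\ge\tfrac12$, where the unweighted argument does not close. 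Your choice $\varphi_k=\ell_k^{1/\delta}$ with $\delta\in\big(2-\rho(1+\tfrac1s),\,2-\tfrac1s\big)$ (a nonempty interval, contained in $(1,2-\tfrac1s)$, precisely because $\rho(s+1)>1$) repairs this: the diagonal satisfies $\sum_k\ell_k=\sum_k\varphi_k^\delta\le\big(\sum_k\varphi_k\big)^\delta$, and the off-diagonal window sum, kept in the form $\big(\sum_{j=M}^N j^{-\frac1s(1+\rho)}\big)(N-M+1)^{1-\rho}$, is dominated by $\big(\sum_{k=M}^N\varphi_k\big)^\delta$ uniformly in $M\le N$. The only step you should make explicit is that window-uniform form of the covariance sum: a bound of order $N^{2-\frac1s-\rho(1+\frac1s)}$ in terms of the global $N$ alone is not the inequality Sprindzuk consumes, although the computation you outline (which is the paper's computation) does produce the uniform version.
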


\begin{proof}
Let $p=\frac{1}{s}$.
Let for $(m,n)\in(\mathbb{N}^*)^2$ such that $m>n$ the sum \[S_{(m,n)}=\sum_{k=m}^n\mathds{1}_{A_k}\circ T^k.\]
By definition and using the invariance of the measure we get
\[\begin{array}{ll}\intInd{\mathbb{T}^2}{\left(S_{(m,n)}-\mathbb{E}\left(S_{(m,n)}\right)\right)^2}{\mu\otimes \lambda}&=\sum_{k=m}^n\sum_{j=m}^n\intInd{\Omega}{\mathds{1}_{A_k}\circ T^{k-j}\mathds{1}_{A_j}-\mathbb{E}_\mu\left(\mathds{1}_{A_k}\vert \mathcal{I}\right)\mathbb{E}_\mu\left(\mathds{1}_{A_j}\vert \mathcal{I}\right)}{\mu}\\
&\leq \mathbb{E}_{\mu}(S_{(m,n)})+2\somme{m}{n}{\somme{1}{n-m}{\intInd{\Omega}{\mathds{1}_{A_{j+l}}\circ T^{l}\mathds{1}_{A_j}-\mathbb{E}_\mu\left(\mathds{1}_{A_{j+l}}\vert \mathcal{I}\right)\mathbb{E}_\mu\left(\mathds{1}_{A_j}\vert \mathcal{I}\right)}{\mu}}{l}}{j} .\end{array}\]
Using the Fourier serie of the indicator function of an interval we obtain \[\begin{array}{ll}\abso{\intInd{\Omega}{\mathds{1}_{A_{j+l}}\circ T^{l}\mathds{1}_{A_j}-\mathbb{E}_\mu\left(\mathds{1}_{A_{j+l}}\vert \mathcal{I}\right)\mathbb{E}_\mu\left(\mathds{1}_{A_j}\vert \mathcal{I}\right)}{\mu}}&=\abso{\sum_{k\in\mathbb{Z}^*}\frac{\sin\left(\frac{2k\pi C}{j^p}\right)\sin\left(\frac{2k\pi C}{(j+l)^p}\right)}{k^2\pi^2}\widehat{\mu}(kl)}\\
&\leq 4C_\mu C\sum_{k\in\mathbb{N}^*}\frac{1}{l^{r(\mu)}}\frac{\abso{\sin\left(\frac{2\pi k C}{j^p}\right)}}{k^{1+r}(j+l)^p\pi}\\
&\simeq \frac{2^{2+r}C_\mu C^{1+r}\pi^{1-r}}{j^{pr(\mu)}(j+l)^pl^{r(\mu)}}\intInd{\mathbb{R}_+^*}{\frac{\abso{\sin(x)}}{x^{1+r}}}{\lambda(x)}.\end{array}\]

Combining this upper bound with the previous etsimate gives \[\begin{array}{ll}\intInd{\mathbb{T}^2}{\left(S_{(m,n)}-\mathbb{E}\left(S_{(m,n)}\right)\right)^2}{\mu\otimes \lambda}&\ll \mathbb{E}_\mu\left(S_{(m,n)}\right)+\intInd{[0,n-m]}{\intInd{[0,n-m]}{\frac{1}{x^{r(\mu)}}\frac{1}{(x+y)^p}\frac{1}{y^{pr(\mu)}}}{\lambda(y)}}{\lambda(x)}\\
&\ll \intInd{[0,\frac{\pi}{2}]}{\frac{1}{(\cos(\theta))^{r(\mu)}(\cos(\theta)+\sin(\theta))^p(\sin(\theta))^{pr(\mu)}}}{\lambda(\theta)}(n-m)^{2-r(\mu)-p(1+r(\mu))}.\end{array}\]
Let $\delta=2-r(\mu)-p(1+r(\mu))$.
Since  $p<\frac{r(\mu)}{1-r(\mu)}$ we have $\frac{\delta}{2}<1-p$.
Therefore, by {Sprindzuk}'s theorem,
\[S_N \overset{\mu\otimes \lambda-a.s}{=}\mathbb{E}(S_N)+O\left(N^{\frac{\delta}{2}}\log_2^{1+\varepsilon}\left(N\right)\right).\]
Hence  \[{\frac{S_N}{\mathbb{E}(S_N)}\xrightarrow[N\to+\infty]{\mu\otimes\lambda-a.s}1}.\]
\end{proof}
\begin{remark}
We observe that this "loss of power" in the {Borel-Cantelli} lemma above increases as the {Rajchman} order decreases.
\end{remark}

Next result relates the order of a {Rajchman} measure to Diophantine properties of its support and thus the SMTP property.
This result will therefore also show the Proposition \ref{diophraj}.

\begin{proposition}[General {Kurzweil} for Rajchman measures]

\label{kurzman}
Consider a Rajchman measure $\mu$ on $\mathbb{T}$ of order
 $0\le r(\mu)\le\frac{1}{2}$. 
Let $s>\frac{1}{r(\mu)}-1$. 
We have $\mu(\mathcal{D}io(s))=1$, or equivalently \[\mu-a.e\, \alpha \in [0,1[, \left(\mathbb{T},\lambda,T_\alpha\right) \,s-MSTP.\]
\end{proposition}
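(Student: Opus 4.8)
The plan is to derive the statement from the dynamical Borel--Cantelli lemma for Rajchman measures (Theorem~\ref{thm:dbcbyrp}) combined with the Kurzweil--Tseng dichotomy (Theorem~\ref{kurztseng}), using a Fubini argument to pass from the joint measure $\mu\otimes\lambda$ on $\mathbb{T}^2$ to fibrewise statements for $\mu$-a.a.\ $\alpha$. The ``equivalently'' is pure Kurzweil--Tseng: once $\mu(\mathcal{D}_i(s))=1$ is established, the ``if'' direction of Theorem~\ref{kurztseng} shows $(\mathbb{T},\lambda,T_\alpha)$ is $s$-MSTP for $\mu$-a.a.\ $\alpha$. So the real content is the single assertion $\mu(\mathcal{D}_i(s))=1$, and since $s>\tfrac{1}{r(\mu)}-1$ is arbitrary, running the argument along a sequence $s\downarrow \tfrac1{r(\mu)}-1$ and intersecting the full-measure sets also yields Proposition~\ref{diophraj}.

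First I would exploit the transvection. Writing $T(x,y)=(x,y+x)$, one has $T^k(\alpha,y)=(\alpha,y+k\alpha)$, so for $A_k=\mathbb{T}\times B_k$ with $B_k=\,]z_k-Ck^{-1/s},z_k+Ck^{-1/s}[$ the counting function of Theorem~\ref{thm:dbcbyrp} reads
\[
S_N(\alpha,y)=\#\{\,k\le N:\ T_\alpha^k y\in B_k\,\}.
\]
Since $r(\mu)\le\tfrac12$ forces $s>1$, we have $\sum_k\lambda(B_k)=\sum_k 2Ck^{-1/s}=+\infty$, hence $\mathbb{E}(S_N)\to\infty$; Theorem~\ref{thm:dbcbyrp} gives $S_N/\mathbb{E}(S_N)\to1$, in particular $S_N\to+\infty$, $\mu\otimes\lambda$-a.s. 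By Fubini, for $\mu$-a.a.\ $\alpha$ one gets $T_\alpha^k y\in B_k$ infinitely often for $\lambda$-a.a.\ $y$, that is $\lambda\big(\limsup_k T_\alpha^{-k}B_k\big)=1$. This is exactly the hitting conclusion of $s$-MSTP, but so far only for the one target sequence $(B_k)$.

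To upgrade ``one target sequence'' to the full $s$-MSTP property I would reduce the uncountable quantifier over monotone $s$-graded targets to a countable family (this is the transference step underlying the reference to Athreya's Theorem~4.2 in \cite{Atry}): it suffices to let the centres $z_k$ range over a fixed countable dense subset of $\mathbb{T}$ and the radius constants $C$ over the rationals, monotonicity allowing one to sandwich an arbitrary admissible target between two members of this family. Applying the previous step to each of these countably many sequences and intersecting the corresponding full-$\mu$-measure sets of $\alpha$ produces a single set of full $\mu$-measure on which $(\mathbb{T},\lambda,T_\alpha)$ is $s$-MSTP; the ``only if'' direction of Theorem~\ref{kurztseng} then yields $\alpha\in\mathcal{D}_i(s)$, whence $\mu(\mathcal{D}_i(s))=1$.

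The main obstacle is precisely the reason one must route through Theorem~\ref{thm:dbcbyrp} instead of a direct Borel--Cantelli estimate on $\mu$. The relevant set $E_q=\{\alpha:\|q\alpha\|<cq^{-s}\}$ is a union of $q$ intervals of length $\asymp q^{-s-1}$; expanding $\mathds{1}_{E_q}$ in Fourier series and inserting the Rajchman bound $|\widehat\mu(mq)|\le C_\mu|mq|^{-r'}$ (for any speed $r'<r(\mu)$) gives only
\[
\mu(E_q)\;\lesssim\; q^{-s}+\frac{2C_\mu\zeta(1+r')}{\pi}\,q^{-r'}.
\]
Since $r'<r(\mu)\le\tfrac12<1$, the tail $q^{-r'}$ is not summable, so the naive first-moment Borel--Cantelli collapses; it is the second-moment/Sprindzuk input behind Theorem~\ref{thm:dbcbyrp} — where the covariance decay controls the correlations between different scales $k$ — that overcomes this. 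A secondary point demanding care is the countable reduction of the $s$-MSTP quantifier, which must be carried out within the exact polynomial radius class $k^{-1/s}$ appearing in Theorem~\ref{thm:dbcbyrp}.
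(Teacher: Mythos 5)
Your route has a genuine gap at the ``upgrade'' step, and it is not repairable in the form you propose. After applying Theorem~\ref{thm:dbcbyrp} and Fubini, the exceptional $\mu$-null set of $\alpha$'s depends on the \emph{entire} centre sequence $(z_k)_{k}$ and on $C$. Your countable reduction does not exist: even restricting the centres to a countable dense set $D\subset\mathbb{T}$, the family of admissible target sequences is $D^{\mathbb{N}}$, which has the cardinality of the continuum, so you cannot intersect the corresponding full-measure sets of $\alpha$. This matters because you invoke Kurzweil--Tseng in the ``only if'' direction (MSTP $\Rightarrow$ diophantine), and that direction needs the \emph{full} $s$-MSTP property, i.e.\ the hitting statement for \emph{every} admissible monotone target sequence; certifying it for countably many sequences is not enough (the bad sequence witnessing failure of MSTP for a non-diophantine $\alpha$ has centres depending on $\alpha$, e.g.\ on its convergents, so it cannot be forced into a universal countable family). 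Note that the paper goes through Kurzweil--Tseng in the opposite, and much cheaper, direction: it first proves $\mu(\mathcal{D}_i(s))=1$ and only then applies Theorem~\ref{kurztseng} (diophantine $\Rightarrow$ MSTP) to get the dynamical statement; no quantification over target sequences is ever needed.

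Moreover, the obstacle you cite to justify the detour --- that the direct first-moment Borel--Cantelli on $\mu$ ``collapses'' --- is not real; removing it is exactly the content of the paper's proof. The set of non-$s$-diophantine $\alpha$ is contained in $\limsup_q E_q(C)$ for any \emph{single} fixed $C>0$, where $E_q(C)=\bigcup_{p=0}^{q-1}\left]\frac{p}{q}-\frac{C}{q^{s+1}},\frac{p}{q}+\frac{C}{q^{s+1}}\right[$, so the convergence half of Borel--Cantelli for one value of $C$ suffices. Since $E_q(C)$ is $\frac1q$-periodic, its Fourier expansion lives on frequencies $kq$, and the point you missed is the interpolation bound $\abso{\sin x}\leq \abso{x}^{\varepsilon}$ for $\varepsilon\in]0,1]$: instead of your estimate with $\abso{\sin}\le 1$, one gets, for a Rajchman speed $r'<r(\mu)$ and any $\varepsilon\in]0,r'[$,
\begin{equation*}
\mu\left(E_q(C)\right)\;\leq\; \frac{2C}{q^{s}}+\sum_{k\neq 0}\frac{\left(2\pi \abso{k} C\right)^{\varepsilon}}{\abso{k}\pi\, q^{s\varepsilon}}\cdot\frac{C_\mu}{\abso{kq}^{r'}}
\;\lesssim\; \frac{1}{q^{s}}+\frac{\zeta\left(1+r'-\varepsilon\right)}{q^{\,r'+s\varepsilon}}.
\end{equation*}
The hypothesis $s>\frac{1}{r(\mu)}-1$, i.e.\ $r(\mu)>\frac{1}{s+1}$, is precisely what guarantees one can pick $r'$ close to $r(\mu)$ and $\varepsilon\in\left]\frac{1-r'}{s},r'\right[$, so that $r'+s\varepsilon>1$ and $\sum_q\mu(E_q(C))<\infty$. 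Then $\mu(\limsup_q E_q(C))=0$, every $\alpha$ outside this null set (and outside the $\mu$-null set of rationals) lies in $\mathcal{D}_i(s)$, and Kurzweil--Tseng finishes. So the direct argument you dismissed is both correct and strictly simpler than the detour through Theorem~\ref{thm:dbcbyrp}, which, as explained above, cannot be completed.
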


\begin{proof}
Let $C>0$, $j\in \mathbb{N}^*$ and $p\in \intervEnt{0}{j-1}$.
Consider \[\sum_{p=0}^{q-1}\mu\left(\left]\frac{p}{j}-\frac{C}{j^{s+1}},\frac{p}{j}+\frac{C}{j^{s+1}}\right[\right)=\intInd{\mathbb{T}}{\mathds{1}_{\left]\frac{p}{j}-\frac{C}{j^{s+1}},\frac{p}{j}+\frac{C}{j^{s+1}}\right[}}{\mu}=2\frac{C}{j^{s}}+\sum_{k\in \mathbb{Z}^*}\frac{\sin{\left(2\pi k \frac{C}{j^{s}}\right)}}{k\pi}\widehat{\mu}(kj).\]

So  \[\sum_{p=0}^{q-1}\mu\left(\left]\frac{p}{j}-\frac{C}{j^{s+1}},\frac{p}{j}+\frac{C}{j^{s+1}}\right[\right)\leq 2\frac{C}{j^{s+1}}+\frac{2C_\mu C^{\varepsilon}}{\pi j^{r(\mu)+s\varepsilon}}\zeta(1+r(\mu)-\varepsilon)\] with $\varepsilon\in ]0,r(\mu)[$.
And \[\mu\left(\union{p\in \intervEnt{0}{j-1}}{\left]\frac{p}{j}-\frac{C}{j^{s+1}},\frac{p}{j}+\frac{C}{j^{s+1}}\right[}\right)\leq 2\left(\frac{C}{j^s}+\frac{C_\mu C^\varepsilon}{\pi j^{r(\mu)+s\varepsilon}}\zeta(1+r(\mu)-\varepsilon)\right).\]
And we have by hypothesis that \[r(\mu)>\frac{1}{s+1}.\]
By strict inequality, we get that there exists $\varepsilon\in ]0,r(\mu)[$ such that \[r(\mu)>\frac{1}{s+1}\ and \  r(\mu)>1-s\varepsilon.\]
We also have the assumption that $r\leq \frac{1}{2}$.
By {Borel-Cantelli} \[\mu\left(\underset{n\to+\infty}{\overline{lim}}\left(\union{p\in \intervEnt{0}{n-1}}{\left]\frac{p}{n}-\frac{C}{n^{s+1}},\frac{p}{n}+\frac{C}{n^{s+1}}\right[}\right)\right)=0.\]
So for \[\mu-a.e\, \alpha \in [0,1[,\exists n\in \mathbb{N}^*, \forall k\geq n, d(k\alpha,\mathbb{Z})\geq \frac{C}{k^s}.\]
Let then take $\alpha\in [0,1[$ such that \[\exists n\in \mathbb{N}^*, \forall k\geq n, d(k\alpha,\mathbb{Z})\geq \frac{C}{k^s}.\]
This proves that $\alpha\in\mathcal{D}io(s)$, hence
$\mu(\mathcal{D}io(s))=1$. 
By {Kurzweil-Tseng} Theorem \ref{kurztseng}, \[{\left(\mathbb{T},\lambda,T_\alpha\right)\, s-MSTP}.\]
\end{proof}

Recall that when we have $r(\mu)>\frac{1}{2}$, the measure $\mu$ is absolutely continuous and therefore by Kurzweil theorem cited above $\mu-a.e\, \alpha\in [0,1[$, the system $\left(\mathbb{T},\lambda,T_{\alpha}\right)$ is $1-MSTP$.

\begin{remark}
We can note that the result obtained  in Proposition \ref{kurzman} remains consistent with that obtained with the Keplerian shear in the Theorem \ref{thm:dbcbyrp} because \[\sum_{k=1}^n \lambda\left(\left]z_k-\frac{C}{k^{\frac{1}{s}}},z_k+\frac{C}{k^{\frac{1}{s}}}\right[\right)^s=\sum_{k=1}^n \frac{2^sC^s}{k}\xrightarrow[n\to+\infty]{}+\infty.\]
\end{remark}


\bigbreak

\begin{remark}
For a {Rajchman} measure with positive order $r(\mu)>0$, $\mu-a.e\, \alpha\in [0,1[$ is Diophantine, that is, $Dio(\alpha)<+\infty$ $\mu- $almost surely.
\end{remark}

The following result shows that 
Proposition \ref{kurzman} is  optimal.
\begin{proposition}[Optimality of the Rajchman order \cite{Kauf}]
    Let $0<r< \frac12$.
    Then for all $1<s<\frac{1}{r}-1$, there exists $\mu$ a Rajchman measure such that $r(\mu)=r$ and $\mu(\mathcal{D}io(s))=0$.
\end{proposition}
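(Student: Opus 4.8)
The plan is to realize the boundary Diophantine exponent by a Salem-type (Kaufman) measure supported on a set of well-approximable numbers, and to tune its Fourier dimension so that the Rajchman order is exactly the prescribed $r$.

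First I would translate the target $\mu(\mathcal{D}_i(s))=0$ into a statement about approximation orders. By definition $x\notin\mathcal{D}_i(s)$ means that for every $C>0$ there is $(p,q)$ with $\abso{qx-p}<\frac{C}{q^s}$; choosing $p$ optimal this reads $\inf_{q}q^s d(qx,\mathbb{Z})=0$, which for irrational $x$ is equivalent to $\liminf_{q\to+\infty}q^s d(qx,\mathbb{Z})=0$. In particular, if there is some $\tau>s$ with $d(qx,\mathbb{Z})<q^{-\tau}$ for infinitely many $q$, then $q^s d(qx,\mathbb{Z})<q^{s-\tau}\to0$ along that subsequence, so $x\notin\mathcal{D}_i(s)$. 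Hence it suffices to produce, for some $\tau>s$, a probability measure supported on the well-approximable set $E_\tau:=\left\{x\in[0,1) : d(qx,\mathbb{Z})<q^{-\tau}\text{ for infinitely many }q\right\}$ and having Rajchman order exactly $r$.

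Next I would choose the exponent and invoke Kaufman's theorem. Set $\tau:=\frac{1}{r}-1$. Since $0<r<\frac12$ we have $\tau>1$, and the hypothesis $s<\frac{1}{r}-1$ gives precisely $\tau>s$. The Jarn\'ik--Besicovitch theorem yields $\dim_H E_\tau=\frac{2}{\tau+1}=2r$, and Kaufman's construction \cite{Kauf} shows that $E_\tau$ is a Salem set carrying a probability measure $\mu$ with Fourier decay $\abso{\widehat{\mu}(k)}\le C_\varepsilon\abso{k}^{-\frac{1}{\tau+1}+\varepsilon}$ for every $\varepsilon>0$; in particular $\mu$ is Rajchman and $r(\mu)\ge\frac{1}{\tau+1}=r$. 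To upgrade this to equality I would use the standard energy estimate: a bound $\abso{\widehat{\mu}(k)}\le C\abso{k}^{-\rho}$ forces finite $\alpha$-energy of $\mu$ for every $\alpha<2\rho$, whence $\dim_H(\mathrm{supp}\,\mu)\ge 2\rho$, i.e. $r(\mu)\le\frac12\dim_H(\mathrm{supp}\,\mu)$. Since $\mathrm{supp}\,\mu\subseteq E_\tau$ has Hausdorff dimension $\frac{2}{\tau+1}=2r$, this gives $r(\mu)\le r$, and therefore $r(\mu)=r$. Finally, as $\mu$ is supported on $E_\tau$ with $\tau>s$, the first step shows that $\mu$-almost every point lies outside $\mathcal{D}_i(s)$, so $\mu(\mathcal{D}_i(s))=0$, which completes the construction.

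The one genuinely hard input is Kaufman's theorem itself: the explicit production of a measure on the well-approximable set $E_\tau$ whose Fourier decay matches its Hausdorff dimension (the Salem property). Everything surrounding it is elementary --- the reformulation of non-$s$-Diophantine numbers via $\liminf_q q^s d(qx,\mathbb{Z})$, the arithmetic $\tau=\frac{1}{r}-1>s$ which is exactly the excluded range of Proposition~\ref{kurzman}, and the energy bound pinning $r(\mu)$ from above --- so the main obstacle is precisely citing and correctly applying the sharp Fourier-decay estimate of \cite{Kauf}.
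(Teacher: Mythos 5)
Your proposal is correct and follows essentially the same route as the paper: both take Kaufman's measure on the well-approximable set with exponent $\tau=\frac{1}{r}-1$ (the paper writes this as $\alpha=s-1+\varepsilon$ with $\varepsilon=\frac{1}{r}-(s+1)$, i.e.\ $1+\alpha=\tau$), whose Fourier decay gives $r(\mu)\geq r$ and whose support avoids $\mathcal{D}_i(s)$ precisely because $\tau>s$. The paper's proof is only a two-line citation of \cite{Kauf}, so your added verifications --- the reformulation of non-$s$-Diophantine points, the Jarn\'ik--Besicovitch dimension count, and the energy estimate pinning $r(\mu)\leq r$ --- are exactly the details it leaves implicit.
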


\begin{proof}
Let $\varepsilon=\frac{1}{r}-(s+1)>0$.
We can consider $\mu_{\alpha}$ with $\alpha = s-1+\varepsilon$ constructed in the document of Kaufman \cite{Kauf} with its support in $E(\alpha)\subset [0,1[\setminus {\mathcal{D}io(s)}$.
\end{proof}

\begin{remark}
    With convex combination, for any $s>1$ we can construct a Rajchman measure $\mu$ for which $\mu(\mathcal{D}io(s))$ is equal to any value in $[0,1]$.
\end{remark}

\begin{exemple}[Diophantine property with self-similar measures]

The self-similar measure $\mu_{\theta}$ with $\theta>1$ not being {Pisot}, according to the work of {Pablo Shmerkin} and {Jean-Pierre Kahane} \cite{Kaha}, has an order of {Rajchman} $r\left(\mu_{\theta}\right)>0$.
In particular $\mu_{\theta}-a.e \,\alpha\in [0,1[$ is Diophantine.

\end{exemple}

\begin{exemple} [Liouville property with {Rajchman} measures on $S_\infty$]

The measure of {Rajchman} $\mu_{\infty}$ mentioned in \ref{cantraj} is {Rajchman} but since its support $S_\infty$ almost surely contains {Liouville} numbers, then we deduce by contrapositive of the Proposition \ref{kurzman} that $\mu_{\infty}$ does not have a strictly positive {Rajchman} order, i.e. that $r\left(\mu_{\infty}\right)=0$.

\end{exemple}

\section{Flow on compact Lie group bundle}

In this section we extend the results on Keplerian shear for tori bundles from Section~\ref{sec:main_cont} to non abelian bundles. The setting is essentially the same except that we replace the torus with a compact connected Lie group. It is worth mentioning that the core of the proofs will not differ too much. The fundamental reason being that
even if the group is not abelian, each individual orbit defines a one-parameter subgroup  which is therefore abelian, hence a torus.

\subsection{Lie group bundle and compatible flow}

Let $G$ be a compact connected {Lie} group. 
\begin{definition}
Let $(M,\mathcal{A})$ be a {Lindelöf} manifold of dimension $n\in\mathbb{N}^*$ of class $\mathcal{C}^1$.

Let $(\Omega,\mu)$ be a Borelian space, $G$ a connected compact {Lie} group and $\pi$ a continuous map from $\Omega$ into $M$.

$\Omega$ is a $G$-bundle if
\begin{enumerate}
\item{locally, we have for the charts $U$ of $\mathcal{A}$ a homeomorphism $\psi_U : \pi^{-1}(U)\to U\times G$}
\item{for all $U$ in $\mathcal{A},\pi_1\circ \psi_U=\pi_{\vert U}.$}
\end{enumerate}
\end{definition}

To define the continuous flow, we are going to use the exponential defined on the corresponding Lie algebra $Lie(G)$ of the group $G$.
Here the connectedness and the compactness of the group provide that the exponential $\exp\colon Lie(G)\to G$ is surjective, and injective from a neighborhood of the origin $B(0,\delta_{\mathrm{inj}})$  to a neighborhood of the neutral element of $e$ of $G$. The later implies that any continuous flow satisfying for all $y,z\in G, \phi_t(yz)=\phi_t(y)z$ is of the form $\phi_t(y)=\exp(tv)y$ for some $v\in Lie(G)$. This motivates the notion of compatible flow. 

\begin{definition}[Compatible flow]
Let $(g_t)_{t\in \mathbb{R}}$ a flow on $(\Omega,\mu)$.

$(g_t)_{t\in \mathbb{R}}$ is a compatible flow iff:

\begin{enumerate}
\item{There exists for all $U\in \mathcal{A}$ a measurable function $v_U : U\to Lie(G)$}
\item{$\forall U\in \mathcal{A},\forall (x,y)\in U\times G, \psi_U\circ g_t\circ\psi_U^{-1}(x,y)=(x,\exp(tv_U(x))y)$}.
\end{enumerate}
\end{definition}

\begin{definition}[Compatible measure]

Consider the notations previously established with the connected compact {Lie} group bundle.

Let $U\in \mathcal{A}$.
Let pose $\mu^U := \mu_{\psi_U}$.
And let pose $\mu^' := \mu_{\pi}$.
$\mu$ on $\Omega$ is compatible iff

\[\restreint{\mu^U}{\pi^{-1}(U)}=\restreint{\mu^'}{U}\otimes \mathcal{H}\,with\,\mathcal{H}\,the \,Haar\,measure\,on\,G.\]
\end{definition}


Next results by {Antoine Delzant} \cite{Adelz} will allow us to transfer the dynamics from $G$ to the torus.

\renewcommand{\theenumi}{\roman{enumi}}

\begin{proposition}
\label{prop_lie}    Let $G$ be a compact and connected Lie group.
\begin{enumerate}
    \item{Any compact, connected and abelian {Lie} group is isomorphic to a torus.}
    \item {\label{exist_max_torus}
    $G$ admits a maximal torus $T$, \[G=\union{g\in G}{gTg^{-1}}\] and for all maximal torus $T^'$, there exists $g\in G$ such that \[T^'=gTg^{-1}.\]}
    \item{\label{Lie_subgroup} Any closed subgroup $H$ of $G$ admits a group structure of {Lie} and $Lie(H)$ is a sev of $Lie(G)$}
    \item{\label{maxtor}

The orbital group of a direction $v\in Lie(G)$
\[ H_v :=\overline{\left\{\exp(tv)\in G : t\in \mathbb{R}\right\}}\]
is an abelian {Lie} group, compact and connected and so isomorphic to a torus.}
\item{\label{orbit_cover_max_torus}
    For all $v\in Lie(G)$, for all maximal torus $T$ of $G$, there exists $g\in G$ such that \[H_v\subset gTg^{-1}.\]}
\end{enumerate}
\end{proposition}

Now, we fix $T$ a maximal torus of $G$ and an isomorphism $\chi : T\to\mathbb{T}^R$ where $R$ is the dimension of $T$.

The combination of these results shows that each individual orbit can be embeded in the same torus $T$, a situation similar to the case of tori bundle, which is the setting of Section~\ref{sec:main_cont}. More precisely, by Proposition 
\ref{prop_lie}-\ref{orbit_cover_max_torus}, for $v\in Lie(G)$ we get a $g\in G$ such that $H_v\subset gTg^{-1}$.
To exploit this property we need a measurable selection of the element $g$, as a function of $v$.

Let $h : Lie(G)\to G$ be a measurable map\footnote{A construction of such a measurable function $h$ is done in Subsection~\ref{measurable_selection}, but other choices are possible.} such that for all $v\in Lie(G)$
\begin{equation}\label{inclusion_orbit}H_v\subset h(v)T(h(v))^{-1}.\end{equation}

Let pose for $U\in\mathcal{A}$ and $x\in U$, \begin{equation}\label{w_U_def}w_U(x)=D\chi(e)(Ad_{h(v_U(x))}(v_U(x)))\end{equation}
with $Ad$ the adjoint operator defined for $g\in G$ and $v\in Lie(G)$ by
\begin{equation}\label{def_Adjoint}Ad_g(v)=\left.\frac{d}{dt}g\exp(tv)g^{-1}\right\vert_{t=0}.\end{equation}

Let pose for nonzero $\xi\in\mathbb{Z}^d$ and $U\in\mathcal{A}$, $\nu^{(\xi,U)}:=\restreint{\mu^'}{U}_{\langle\xi\vert w_U(\cdot)\rangle}.$

\subsection{Keplerian shear and Rajchman measures on $G$-bundle}

\begin{theorem}
\label{Lieth} 
The dynamical system $(\Omega,\mu,(g_t)_{t\in\mathbb{R}})$ exhibits Keplerian shear iff for all nonzero $\xi\in \mathbb{Z}^R$, and for all $U\in\mathcal{A}$, $\restreint{\nu^{(\xi,U)}}{\mathbb{R}^*}$ is {Rajchman}
\end{theorem}

The plan of the demonstration will initially be to come back to a situation very similar to that of the torus bundle by relying on the maximal torus. To do this, we are going to use the orbital group $H_{v_U(x)}$ which is isomorphic to a torus.
The second step will be for a chart $U\in \mathcal{A}$ and $x\in U$ fixed to conjugate the action of $\exp(t v_U(x))$ on $G$ to a translation by $tw_U( x)$ on the torus.
Once immersed in this configuration provided by Lemma~\ref{lie2tore}, we will be able to carry out the same reasoning as in the torus in order to arrive at the property of {Rajchman} of $\restreint{\nu^{(\xi,U)}}{\mathbb{R}^*}$.

Consider the neighborhood $V$ of the local coordinate system of the second kind associated with $Lie(T)$ and $(Lie(T))^{\perp}$.

We cover $G$ with a finite family of elements $(g_k)_{k\in \intervEnt{1}{l}}\in G^{\intervEnt{1}{l}}$ s.t 
\begin{equation}\label{exploc}
G=\bigcup_{k=1}^lVg_k.
\end{equation}
Let $(W_k)_{k\in \intervEnt{1}{l}}$ s.t $W_1=Vg_1$ and \begin{equation}\label{partition}\forall k\in \intervEnt{2}{l},W_k=Vg_k\setminus \left(\union{j\in\intervEnt{1}{k-1}}{Vg_j}\right).\end{equation}

Let $\varphi :  G^2\to  G$ defined by $\varphi(x,y)=xy$ and $\varphi^v : G^2\to  G$ defined by $\varphi^v(x,y)=\varphi(h(v)x,y)$.
By \cite{NBour}, we can define 
 $$\begin{array}{lccl}
\psi_k: & Vg_k&\to & Lie(T)\times G\\
&x&\mapsto& \left(x_T,\exp(x_T^{\perp})g_k\right).
\end{array}$$

Let
$ \eta :  Lie(G)\times G\to G^2$ defined by 
$\eta(\theta,y)=\left(\exp(\theta),y\right)$.
We define for $v\in Lie(G),$ $k\in \intervEnt{1}{l}$ and $x\in Vg_k$,
\begin{equation}
\psi_k^v(x)=\psi_k((h(v))^{-1}x),
\end{equation}
\begin{equation}
    \varphi^v(x)=\varphi(h(v)x,y),
\end{equation}
for $(\theta,y)\in Lie(G)\times G$ and $t\in\mathbb{R}$,
\begin{equation}
    \phi_t^v(\theta,y)=(\theta+tAd_{h(v)}(v),y)
\end{equation}
and for $z\in G$,
\begin{equation}
    \chi_v(z)=\chi((h(v))^{-1}zh(v)).
\end{equation}
Thus, for all $k\in\intervEnt{1}{l}$,for all $x\in h(v)Vg_k$, we get
\begin{equation}\label{flot_translation}\left(\varphi^v\circ\eta\circ\phi_t^v\circ\psi_k^v\right)(x)=\exp(tv)x.\end{equation}

\begin{lemme}\label{lie2tore}
Let $U\in\mathcal{A}$ and $(f_1,f_2)\in \left(\mathbb{L}^2_{\restreint{\mu^'}{U}\otimes\mathcal{H}}\left(U\times G\right)\right)^2$.
We define for $t\in\mathbb{R}$ 
\[
b_t(f_1,f_2):=
\integraleMes{U\times G}{\overline{f_1}(x,\exp\left(tv_U(x)\right)y)f_2(x,y)}{\restreint{\mu^'}{U}\otimes\mathcal{H}(x,y)}.
\]

Then \begin{equation}\label{bt}
b_t(f_1,f_2)=\sommeInd{k\in\intervEnt{1}{l}}{\integraleMes{U}{\left(\integraleMes{\mathbb{T}^{R}\times h(v_U(x))W_k}{\overline{\check{f_1}}(x,z+tw_U(x),r))\check{f_2}(x,z,r))}{m^'_{(x,k)}}(z,r)\right)}{\restreint{\mu^'}{U}(x)}}
\end{equation}
with 
$$
\check{f_1} : (x,z,r)\mapsto f_1(x,\varphi^{v_U(x)}(\chi_{v_U(x)}^{-1}\left(z\right),r)) \text{ and } \check{f_2} : (x,z,r)\mapsto f_2(x,\varphi^{v_U(x)}(\chi_{v_U(x)}^{-1}\left(z\right),r)).
$$
and $m^'_{(x,k)}=\left(\chi_{v_U(x)}\circ\exp\right)_{*}m_{(x,k)}$ is the push forward measure obtained by transfer theorem, where $m_{(x,k)}=\left(\psi_k^{v_U(x)}\right)_{*} \mathcal{H}$ is the push forward measure associated.
We recall that $D\chi_{v_U(x)}(e)=D\chi(e)\circ Ad_{h(v_U(x)).}$
\end{lemme}

\begin{proof}

Let non-zero $\xi\in \mathbb{Z}^R$.
Let $U\in\mathcal{A}$ and $(f_1,f_2)\in \left(\mathbb{L}^2_{\restreint{\mu^'}{U}\otimes\mathcal{H}}\left(U\times G\right)\right)^2$.
We have the following expansion
\begin{align*}
b_t(f_1,f_2)&:=
\integraleMes{U\times G}{\overline{f_1}(x,\exp\left(tv_U(x)\right)y)f_2(x,y)}{\restreint{\mu^'}{U}\otimes\mathcal{H}(x,y)}\\
&=\integraleMes{U}{\integraleMes{G}{\overline{f_1}(x,\exp\left(tv_U(x)\right)y)f_2(x,y)}{\mathcal{H}(y)}}{\restreint{\mu^'}{U}(x)}\\
&=\sum_{k=1}^l\integraleMes{U}{\left(\integraleMes{h(v_U(x))W_k}{\overline{f_1}(x,\check{\varphi^{v_U(x)}}\circ\phi_t^{v_U(x)}\circ\psi_k^{v_U(x)}(y))f_2(x,\check{\varphi^{v_U(x)}}\circ\psi_k^{v_U(x)}(y))}{\mathcal{H}(y)}\right)}{\restreint{\mu^'}{U}(x)}\\
&=\sum_{k=1}^l\integraleMes{U}{\left(\integraleMes{Lie(T)\times h(v_U(x))W_k}{\overline{f_1}(x,\check{\varphi^{v_U(x)}}\circ\phi_t^{v_U(x)}(\theta,r))f_2(x,\check{\varphi^{v_U(x)}}(\theta,r))}{m_{(x,k)}}(\theta,r)\right)}{\restreint{\mu^'}{U}(x)}.
\end{align*}
We have completed the first step, by placing ourselves in the {Lie} algebra of the maximal torus $T$ which is abelian, so
\[
\check{\varphi}\circ\phi_t^{v_U(x)}(\theta,r)=\check{\varphi}(\theta+tv_U(x),r)=\varphi(\exp(\theta+tv_U(x)),r)=\varphi(\exp(tv_U(x))\exp(\theta),r)
.\]
We are now at the second step placing ourselves in the sections of the Lie group G by $T$.
So 
\begin{align*}
&\quad b_t(f_1,f_2)\\
&=\sum_{k=1}^l\integraleMes{U}{\left(\integraleMes{Lie(T)\times h(v_U(x))W_k}{\overline{f_1}(x,\varphi(\exp(tv_U(x))\chi_{v_U(x)}^{-1}\left(\chi_{v_U(x)}\left(e^\theta)\right)\right),r))f_2(x,\varphi(e^\theta),r))}{m_{(x,k)}}(\theta,r)\right)}{\restreint{\mu^'}{U}(x)}\\
&=\sum_{k=1}^l\integraleMes{U}{\left(\integraleMes{\mathbb{T}^{R}\times h(v_U(x))W_k}{\overline{f_1}(x,\varphi(\exp(tv_U(x))\chi_{v_U(x)}^{-1}\left(z\right),r))f_2(x,\varphi(\chi_{v_U(x)}^{-1}\left(z\right),r))}{m^'_{(x,k)}}(z,r)\right)}{\restreint{\mu^'}{U}(x).}
\end{align*}
We pass here in the torus isomorphic to $T$. Since
\[\exp(tv_U(x))\chi_{v_U(x)}^{-1}\left(z\right)=\chi_{v_U(x)}^{-1}(\chi_{v_U(x)}(\exp(tv_U(x)))+z)\]
and \[\chi_{v_U(x)}(\exp(tv_U(x)))+z=z+tD\chi_{v_U(x)}(e)=z+tw_U(x),\]
we get
\begin{align*}
&\quad b_t(f_1,f_2)\\
&=\sum_{k=1}^l\integraleMes{U}{\left(\integraleMes{\mathbb{T}^{R}\times h(v_U(x))W_k}{\overline{f_1}(x,\varphi(\chi_{v_U(x)}^{-1}\left(z+tw_U(x)\right),r))f_2(x,\varphi(\chi_{v_U(x)}^{-1}\left(z\right),r))}{m^'_{(x,k)}}(\theta,r)\right)}{\restreint{\mu^'}{U}(x).}
\end{align*}
This proves \eqref{bt}.
\end{proof}
We are then in the case of a torus foliation by the variable $x$ which will then allow us
to apply the same reasoning as in the case of the torus.
\begin{proof}[Proof of Theorem \ref{Lieth}]
Let begin with the direct implication.
Suppose that the dynamical system $(\Omega,\mu,(g_t)_{t\in\mathbb{R}})$ exhibits Keplerian shear.
Let $U\in\mathcal{A}$ and let \[\xi\in\mathbb{Z}^R\setminus\{0\}.\]
Let $(f_1,f_2)\in \left(\mathbb{L}^2_{\restreint{\mu^'}{U}\otimes\mathcal{H}}\left(U\times G\right)\right)^2$ such that :
\[\check{f_1} : (x,z,r) \mapsto e^{2i\pi\langle\xi\vert z\rangle}\]
and \[\check{f_2} : (x,z,r) \mapsto \mathds{1}_{\langle\xi\vert w_U(\cdot)\rangle\ne  0}(x)e^{2i\pi\langle\xi\vert z\rangle}\] taking notations of Lemma \ref{lie2tore}.
Then
\[\integraleMes{\mathbb{R}}{e^{-2i\pi tz}}{\nu^{\xi,U}(z)}=\integraleMes{U}{e^{-2i\pi t\langle\xi\vert w_U(x)\rangle}g(x)}{\mu^'(x)}\]
with
\[g : x\in U\mapsto \mathds{1}_{\langle\xi\vert w_U(\cdot)\rangle\ne 0}(x).\]
Thus, by Lemma \ref{lie2tore}
\[\integraleMes{U}{e^{-2i\pi t\langle\xi\vert w_U(x)\rangle}g(x)}{\restreint{\mu^'}{U}(x)}=\integraleMes{U\times G}{\overline{f_1}(x,\exp\left(tv_U(x)\right)y)f_2(x,y)}{\restreint{\mu^'}{U}\otimes\mathcal{H}(x,y)}.\]
By the Keplerian shear property the right hand term converges and as in (\ref{birkth}), the limit is zero.
So \[{\integraleMes{\mathbb{R}}{e^{-2i\pi tz}}{\nu^{\xi,U}(z)}\xrightarrow[t\to\pm\infty]{}0}.\]

Let us continue with the reciprocal implication.
Let $R$ be the dimension of the maximal torus of $G$.

Consider  $k\in\intervEnt{1}{l}$ and define the measure $\omega^k$ on $U\times \mathbb{T}^R\times W_k$  by
\[\forall A\in \mathcal{B}(U\times \mathbb{T}^R\times h(v_U(x))W_k), \omega^k(A)=\integraleMes{U}{\integraleMes{\mathbb{T}^R\times h(v_U(x))W_k}{\mathds{1}_A(x,y,z)}{m^'_{(x,k)}(y,z)}}{\mu^'_{\vert U}(x)}.\]
Let $U\in\mathcal{A}$ and $(\xi_1,\xi_2)\in (\mathbb{Z}^R)^2$. Let $(f_1,f_2)\in \left(\mathbb{L}^2_{\restreint{\mu^'}{U}\otimes\mathcal{H}}\left(U\times G\right)\right)^2$ such that,
\[\check{f_1} : (x,z,r) \mapsto a_1(x,r)e^{2i\pi\langle\xi_1\vert z\rangle}\]
and \[\check{f_2} : (x,z,r) \mapsto a_2(x,r)e^{2i\pi\langle\xi_2\vert z\rangle}\]
with $a_1$ and $a_2$ square summable functions in the appropriate {Lebesgue} space.
If $m\neq m^{'}$, the scalar product between $f_1,f_2$ is zero. Therefore we assume that 
$m=m^{'}$.
So, by Lemma \ref{lie2tore}  
\begin{equation}\label{expfunction}\integraleMes{U\times G}{\overline{f_1}(x,\exp\left(tv_U(x)\right)y)f_2(x,y)}{\restreint{\mu^'}{U}\otimes\mathcal{H}(x,y)}=\integraleMes{U}{e^{-2i\pi t\langle\xi_1\vert w_U(x)\rangle}g(x)}{\restreint{\mu^'}{U}(x)}\end{equation}
with
\[g : x\in U\mapsto \sum_{k=1}^l\left(\integraleMes{h(v_U(x))W_k}{\overline{a_1}(x,r)a_2(x,r)e^{2i\pi\langle\xi_2-\xi_1\vert z \rangle}}{m^'_{(x,k)}(z,r)}\right).\]
By hypothesis, when $\xi_1\ne 0$, \[\integraleMes{\mathbb{R}}{e^{-2i\pi tz}}{\nu^{\xi_1,U}(z)}\xrightarrow[t\to\pm\infty]{}0.\]
And then, by Lemma \ref{convfaible}, we have that $\exp(2i\pi t\cdot)$ converge weakly-$*$ in $\mathbb{L}^{\infty}_{\nu^{\xi_1,U}}(\mathbb{R})$ to $0$.
Thus, \[\integraleMes{U}{e^{-2i\pi t\langle\xi_1\vert w_U(x)\rangle}g(x)}{\restreint{\mu^'}{U}(x)}\xrightarrow[t\to\pm\infty]{}\intInd{\{\langle\xi_1\vert w_U(\cdot)\rangle=0\}\times G}{\overline{f_1}(x,y)f_2(x,y)}{\mu^'_{\vert U}\otimes \mathcal{H}(x,y)}.\]
Then, remembering (\ref{expfunction}), we get 
\[
\integraleMes{U\times G}{\overline{f_1}(x,\exp\left(tv_U(x)\right)y)f_2(x,y)}{\restreint{\mu^'}{U}\otimes\mathcal{H}(x,y)}\xrightarrow[t\to\pm\infty]{}\intInd{\{\langle\xi_1\vert w_U(\cdot)\rangle=0\}\times G}{\overline{f_1}(x,y)f_2(x,y)}{\mu^'_{\vert U}\otimes \mathcal{H}(x,y)}.
\]
By totality obtained by the {Fourier} series expansion, this gives that for all functions $f_1,f_2\in\mathbb{L}^2_{\mu^'_{\vert U}\otimes\mathcal{H}}(U\times G)$
\begin{equation}\label{denselie}
\integraleMes{U\times G}{\overline{f_1}(x,\exp\left(tv_U(x)\right)y)f_2(x,y)}{\restreint{\mu^'}{U}\otimes\mathcal{H}(x,y)}\xrightarrow[t\to\pm\infty]{}\intInd{U\times G}{\overline{E_{\mu^'_{\vert U}\otimes\mathcal{H}}(f_1\vert \mathcal{I}_U)}E_{\mu^'_{\vert U}\otimes\mathcal{H}}(f_2\vert \mathcal{I}_U)}{\mu^'_{\vert U}\otimes\mathcal{H}}.
\end{equation}
\end{proof}


\subsection{Keplerian shear on $G$-bundle in the regular case}

Now, the aim here is to give an analogue of the fundamental Theorem 3.3 in \cite{DaTho}, which guarantees Keplerian shear for flows with regular velocities and the negligible critical points, for absolutely continuous measures.

\begin{theorem}\label{Lie_Theorem_kepler_regular}
If for all $U\in \mathcal{A}$, $w_U$ is $\mathcal{C}^1$, 
$\mu^'_{\vert U}\ll\lambda$ and
$$\mu\left(\union{\xi\in\prive{\mathbb{Z}^d}{\left\{0\right\}}}{\left\{x\in U : d\langle\xi\vert w_U(x)\rangle=0\right\}\setminus\{x\in U : \langle\xi\vert w_U(x)\rangle=0\}}\right)=0,$$
then the dynamical system $(\Omega,\mu,(g_t)_{t\in\mathbb{R}})$ has Keplerian shear.
\end{theorem}

\begin{lemme}\label{critiques_negli}
Let $V\subset \mathbb{R}^d$ be an open set and $f\colon V\to\mathbb{R}$ a $\mathcal{C}^1$ map. If $\{d_xf=0\}\setminus \{f=0\}$ has zero Lebesgue measure then for any $a\in\mathbb{R}^*$, $\{f=a\}$ has zero Lebesgue measure.
\end{lemme}
\begin{proof}
Let $(e_i)$ be the canonical basis of $\mathbb{R}^d$ and denote by $\mathcal{C}_i$ the cone of vectors $h\neq0$ such that the angle between $h$ and $e_i$ is less than or equal to $\pi/8$. 

Let $a\in\mathbb{R}^*$ and set $A=\{f=a\}$ and $A^'$ the set of Lebesgue density points of $A$, that is $\lambda(A\cap B(x,r))/\lambda(B(x,r))\to 1$ as $r\to0$ for each $x\in A^'$. 

Suppose that $x\in A^'$.
For each $i=1,\ldots,d$ there exists a sequence $h_n^i\in\mathcal{C}_i$ such that $x+h_n^i\in A$ and $h_n^i\to0$. Without loss of generality we may assume that $h_n^i/\|h_n^i\|$ converges to some $h^i\in\mathcal{C}_i$. Since $f(x+h_n^i)=a=f(x)$, a first order expansion shows that $d_xf(h^i)=0$. Since the $h_i$ form a basis of $\mathbb{R}^d$, this implies that $d_xf=0$. 

By assumption we get that $A^'$, and therefore $A$, has zero measure.
\end{proof}

\begin{proof} [Proof of Theorem \ref{Lie_Theorem_kepler_regular}]
Let $U\in\mathcal{A}$.
Let us show that the measures $\nu^{(\xi,U)}$ have the {Rajchman} property.
Let $\xi\in\prive{\mathbb{Z}^d}{\{0\}}$.
By {Radon-Nikodym}
\[\integraleMes{U}{e^{-2i\pi t\langle\xi\vert w_U(x)\rangle}}{\restreint{\mu^'}{U}(x)}=\integraleMes{U}{e^{-2i\pi t\langle\xi\vert w_U(x)\rangle}\frac{d\mu^'_{\vert U}}{d\lambda}(x)}{\lambda(x)}.\]
But,
\[\mu\left(\union{\xi\in\prive{\mathbb{Z}^d}{\left\{0\right\}}}{\left\{x\in U : d\langle\xi\vert w_U(x)\rangle=0\right\}\setminus\{x\in U : \langle\xi\vert w_U(x)\rangle=0\}}\right)=0.\]
Thus, by Lemma \ref{critiques_negli},  \[\forall a\in\mathbb{R}^*,\mu\left(\union{\xi\in\prive{\mathbb{Z}^d}{\left\{0\right\}}}{\left\{x\in U : \langle\xi\vert w_U(x)\rangle=a\right\}}\right)=0.\]

By local normal form of submersions, we can choose a countable family $(\varphi_k, V_k)_{k\in \mathbb{N}}$ of open charts on $M$ which are pariwise disjoints, cover $U\setminus\left(\union{\xi\in\prive{\mathbb{Z}^d}{\left\{0\right\}}}{\left\{x\in U : d\langle\xi\vert w_U(x)\rangle=0\right\}}\right)$ up to a Lebesgue-negligible set and for all $k\in\mathbb{N}$ and all $x\in \varphi_k(V_k)$, \[\langle \xi\vert w_U(\varphi^{-1}_k(x))\rangle=x_1.\]
Therefore, for $k\in\mathbb{N}$,
\[\intInd{V_k}{e^{-2i\pi t\langle\xi\vert w_U(x)\rangle}\frac{d\mu^'_{\vert U}}{d\lambda}(x)}{\lambda(x)}=\intInd{\varphi_k(V_k)}{e^{-2i\pi tz}\frac{d\mu^'_{\vert U}}{d\lambda}(\varphi_k^{-1}(z))\abso{\det(J_{\varphi_k^{-1}})(z)}}{\lambda(z)}.\]
In the other hand, we have 
\[\integraleMes{U}{e^{-2i\pi t\langle\xi\vert w_U(x)\rangle}\frac{d\mu^'_{\vert U}}{d\lambda}(x)}{\lambda(x)}=\intInd{\langle\xi\vert w_U\rangle=0}{\frac{d\mu^'_{\vert U}}{d\lambda}(x)}{\lambda(x)}+\sum_{k\in\mathbb{N}}\intInd{V_k}{e^{-2i\pi t\langle\xi\vert w_U(x)\rangle}\frac{d\mu^'_{\vert U}}{d\lambda}(x)}{\lambda(x)}.\]
Thus, by {Riemann-Lebesgue}, \[\integraleMes{U}{e^{-2i\pi t\langle\xi\vert w_U(x)\rangle}\frac{d\mu^'_{\vert U}}{d\lambda}(x)}{\lambda(x)}\xrightarrow[t\to+\infty]{}\intInd{\langle\xi\vert w_U\rangle=0}{\frac{d\mu^'_{\vert U}}{d\lambda}(x)}{\lambda(x)}.\]
Therefore, \[\integraleMes{U}{e^{-2i\pi t\langle\xi\vert w_U(x)\rangle}}{\restreint{\mu^'}{U}(x)}\xrightarrow[t\to+\infty]{}\mu^'_{\vert U}(\{\langle\xi\vert w_U\rangle=0\}).\]
By Theorem \ref{Lieth}, we get that $(\Omega,\mu,(g_t)_{t\in\mathbb{R}})$ exhibits Keplerian shear.
\end{proof}
%


We can develop a corollary of Theorem \ref{Lie_Theorem_kepler_regular}. We define \begin{equation}
    \label{spheric_constant_g}
    \bar h : v\in Lie(G)\setminus\{0\}\mapsto h\left(\frac{1}{\Vert v\Vert_2 }v\right)
\end{equation} and we assume that $\bar h$ is $\mathcal{C}^1$ on a full measure open set.

\begin{coro}\label{coro_regular}
    We suppose that for a countable family $(U_j)_{j\in\mathbb{N}}$ of charts which cover $M$ up to a Lebesgue-negligible set, $\bar h$ is $\mathcal{C}^1$ on $V_j$ such that $v_{U_j}(U_j)\subset V_j$, $\mu^'_{\vert U_j}\ll \lambda$, $v_{U_j}$ is a $\mathcal{C}^1$-subimmersion and for $\mu^'_{\vert U_j}-a.a\ x\in U_j$,  $Rank(Dv_{U_j}(x))\geq 1$.
    
    We get that the dynamical system $(\Omega,\mu,(g_t)_{t\in\mathbb{R}})$ exhibits Keplerian shear.
\end{coro}

First, for $R\in \mathbb{N}^*$, $0\neq\xi\in\mathbb{Z}^R$ we define the function \begin{equation}\label{scalar_def}
    S_\xi : v\in Lie(G)\mapsto \langle\xi\vert D\chi(e)(Ad_{h(v)}(v))\rangle .
\end{equation}
We'll need for $v\in Lie(G)$ the differential of $A^v : g\in G\mapsto Ad_g(v)$.

\begin{lemme}\label{adjoint_diff}
For $v,w\in Lie(G)$, noting $[\cdot,\cdot]$ the Lie bracket, we have
    \[DA^v(g)(w)=[Ad_g(w),Ad_g(v)].\]
\end{lemme}

By \eqref{inclusion_orbit}, for all $v\in Lie(G)\setminus\{0\}$ and all $t\in\mathbb{R}$,\begin{equation}
    \label{stability_C}(\bar h(v))^{-1}\exp(tv)\bar h(v)\in T.
\end{equation}

To ensure the regularity of $S_\xi$, we need next lemmas.
\begin{lemme}\label{constance_C}
    For all $v\in Lie(G)\setminus\{0\}$, \[D\bar h(v)(v)=0.\]
\end{lemme}

\begin{proof}
    Let $t\in ]-1,1[$ and $v\in Lie(G)\setminus\{0\}$.
    Thus $\bar h(v+tv)=h\left(\frac{1+t}{(1+t)\Vert v\Vert}v\right)=\bar h(v).$
    Therefore, \[D\bar h(v)(v)=0.\]
\end{proof}

\begin{lemme}\label{singularity_S_xi}
    
    Let $V\subset Lie(G)\setminus\{0\}$ and $W\subset \mathbb{R}^d$ open sets.
    We suppose that the restriction of $h$ on $V$ is $\mathcal{C}^1$.
    Let $\psi : V\to W$ be a $\mathcal{C}^1$-diffeomorphism.
    We have for $w\in W$,
    \[S_\xi(\psi^{-1}(w))\ne 0\implies D(S_\xi\circ \psi^{-1})(w)\left(\left(D\psi^{-1}(w)\right)^{-1}\circ \psi^{-1}(w)\right)\ne 0.\]
\end{lemme}
\begin{proof}
    Let $w\in W$ such that $S_\xi(\psi^{-1}(w))\ne 0$.
    By Lemma \ref{adjoint_diff}, for $u\in\mathbb{R}^d$ we get
    \[D(S_\xi\circ \psi^{-1})(w)(u)=\left(\begin{array}{ll}\langle \xi\vert D\chi(e)(Ad_{\bar h(\psi^{-1}(w))}\circ D\psi^{-1}(w)(u))\rangle\\+\langle\xi\vert D\chi(e)\circ Ad_{\bar h (\psi^{-1}(w))}\circ[D\bar h({\psi^{-1}(w)})\circ D\psi^{-1}(w)(u),\psi^{-1}(w)]\rangle.\end{array}\right)\]
    Taking $u=\left(D\psi^{-1}(w)\right)^{-1}\circ \psi^{-1}(w)$, by Lemma \ref{constance_C},\[D(S_\xi\circ \psi^{-1})(w)(u)=S_\xi(\psi^{-1}(w))\ne 0.\]
\end{proof}

This intermediate result helps us to connect Lemmas \ref{singularity_S_xi} and \ref{regular_w_U}.

\begin{lemme}\label{rank_regular_v}
    Let $U\subset M$ and $x\in U$.
    Let $v : U\to V\subset Lie(G)$ a $\mathcal{C}^1$ function and subimmersion around $x$ such that $k:=rank(Dv(x))\geq 1$.
    We have that there exists a $\mathcal{C}^1$-diffeomorphism $\psi : V\to V^'\subset Lie(G)$ and a neighbourhood $W\subset U$ of $x$ such that for all $a\in W$ \[\psi(v(a))\in Im(D(\psi\circ v)(a)).\]
\end{lemme}
\begin{proof}
Let $U\subset M$ and $x\in U$.
Let $v : U\to V\subset Lie(G)$ a $\mathcal{C}^1$ function and subimmersion around $x$ such that $k=rank(Dv(x))\geq 1$.
Using local normal form of subimmersions, there exists $V^'\subset \mathbb{R}^d$, $U^'$ open sets and $\mathcal{C}^1$-diffeomorphisms  $\psi : V\to V^'$ and $\varphi : U\to U^'$ such that for all $z\in U^'$, \[\psi(v(\varphi^{-1}(z)))=(z_1,\cdots,z_k,0,\cdots,0).\]
Let $a\in U$.
Therefore, \[D(\psi\circ v)(a)(u)=((D\varphi(a)(u))_1,\cdots,(D\varphi(a)(u))_k,0,\cdots,0).\]
Taking $u=(D\varphi(a))^{-1}(\varphi(a))$, we get
\[D(\psi\circ v)(a)(u)=\psi(v(a)).\]
\end{proof}

We define here $w_U(x)=D\chi(e)(Ad_{\bar h(v_U(x))}(v_U(x)))$, where we recall that $\bar h$ is defined in \eqref{spheric_constant_g}. We emphasize that changing $h$ to $\bar h$ does not affect the role of $w_U$, in particular Theorem \ref{Lie_Theorem_kepler_regular} applies verbatim.
We finally get that $w_U$ is regular in the following sense.

\begin{lemme}\label{regular_w_U}
    Suppose that $v_U$ is a $\mathcal{C}^1$ subimmersion with a rank greater than $1$ $\mu-$almost everywhere.
    We get that $w_U$ is $\mathcal{C}^1$ on a full measure open set such that
    \[\mu\left(\union{\xi\in\prive{\mathbb{Z}^d}{\left\{0\right\}}}{\left\{x\in U : d\langle\xi\vert w_U(x)\rangle=0\right\}\setminus\{x\in U : \langle\xi\vert w_U(x)\rangle=0\}}\right)=0.\]
\end{lemme}
\begin{proof}
    By hypothesis, $v_U$ is subimmersion with a rank greater than $1$ $\mu-$almost everywhere.
    Thus, by Lemma \ref{rank_regular_v}, for $\mu-$almost all $x\in U$, there exists a neighbourhood $W$ of $x$, $V$ an open neighbourhood of $v_U(x)$, $\psi : V\to V^'\subset Lie(G)$ a $\mathcal{C}^1$-diffeomorphism  such that for all $a\in W$, \[\psi(v_U(a))\in Im(D(\psi\circ v_U)(a)).\]
    Let $x\in U$ such that $\langle\xi\vert w_U(x)\rangle\ne 0$.
    Therefore, \[\langle\xi\vert D\chi(e)(Ad_{\bar h(v_U(x))}(\psi^{-1}(\psi(v_U(x)))))\rangle\ne 0.\]
    We have\[d\langle\xi\vert w_U(x)\rangle(u)=D(S_\xi\circ \psi^{-1})(\psi\circ v_U(x))(D(\psi\circ v_U)(x)(u)).\]
    By Lemma \ref{rank_regular_v}, we can get $u$ such that \[d\langle\xi\vert w_U(x)\rangle(u)=D(S_\xi\circ \psi^{-1})(\psi\circ v_U(x))(v_U(x))=\langle\xi\vert w_U(x)\rangle\ne 0.\]
    Thus, \[\mu\left(\union{\xi\in\prive{\mathbb{Z}^d}{\left\{0\right\}}}{\left\{x\in U : d\langle\xi\vert w_U(x)\rangle=0\right\}\setminus\{x\in U : \langle\xi\vert w_U(x)\rangle=0\}}\right)=0.\]
\end{proof}

\begin{proof} [Proof of Corollary \ref{coro_regular}]
    By Lemma \ref{regular_w_U}, we get that $w_{U_j}$ is $\mathcal{C}^1$ and \[\mu\left(\union{\xi\in\prive{\mathbb{Z}^d}{\left\{0\right\}}}{\left\{x\in U_j : d\langle\xi\vert w_{U_j}(x)\rangle=0\right\}\setminus\{x\in U_j : \langle\xi\vert w_{U_j}(x)\rangle=0\}}\right)=0.\]
    Applying Theorem \ref{Lie_Theorem_kepler_regular}, we get the result.
\end{proof}

\subsection{Measurable selection from the orbital group to the maximal torus}\label{measurable_selection}

We detail here how to build a measurable function $h$ which fullfils all the requirements \eqref{inclusion_orbit}. First we ensure the existence of a measurable selection of the element $g$ which conjugates an element of $G$ to another one in the fixed maximal torus $T$.
To select this $g$, we will use the Kuratowski theorem proved in \cite{Parth}.

\begin{theorem}[Kuratowski]\label{Kuratowski}
    Let $(X,d)$ a Polish space and $(\Omega,\mathcal{F})$ a measurable space.
    Let $F$ a multifonction from $\Omega$ to $X$ such that for all $\omega\in \Omega$ and for all open set $U$ of $X$,\[F(\omega)=\overline{F(\omega)}\ and\ F^{-1}(U)\in\mathcal{F}.\]
    There exists a $\mathcal{F}-\mathcal{B}(X)$ measurable function $f : \Omega\to X$ such that for all $\omega\in \Omega$,\[f(\omega)\in F(\omega).\]
\end{theorem}

Now, we build the multifunction $F$ which allows us to identify a measurable function $f$ such that for all $z\in G$, $f(z)z(f(z))^{-1}\in T$.
Let \[F : z\in G\mapsto \{g\in G : z\in gTg^{-1}\}.\]
Now, we need to have $F(z)$ closed and non-empty.

\begin{lemme}\label{closed_F}
    For all $z\in G$, $F(z)\ne \emptyset$ is closed
\end{lemme}

\begin{proof}
    Let $z\in G$.
    By Proposition \ref{prop_lie}-\ref{exist_max_torus}, we get that $F(z)\ne \emptyset$.
    Let $g\in \overline{F(z)}$.
    There exists $(g_n)_{n\in\mathbb{N}}\in (F(z))^\mathbb{N}$ such that \[g_n\xrightarrow[n\to+\infty]{}g.\]
    Thus, for all $n$, $g_n^{-1}zg_n\in T$.
    We know that $T$ is closed and $G$ is a Lie group, thus, \[g^{-1}zg\in T.\]
\end{proof}

Now, we have to prove that for all open set $U$ of $G$, $F^{-1}(U)$ is a measurable set.

\begin{lemme}\label{measurable_F}
    For all open set $U$, $F^{-1}(U)$ is a measurable set.
\end{lemme}

\begin{proof}
    Let \[P : (g,t)\in G\times T\mapsto gtg^{-1}.\]
    $P$ is continuous because $G$ is a Lie group.
    Let $U$ an open set of $G$.
    We see immediately that \[F^{-1}(U)=P(U\times T).\]
    We know that $G$ is a compact connected Lie group, thus $G$ is metrizable and $U$ is a countable union of compact sets.
    By compactness of $T$ and continuity of $P$, $P(U\times T)$ is a countable union of compact set.
    Then, $F^{-1}(U)$ is a measurable set.
\end{proof}

Now, by Lemma \ref{closed_F}, \ref{measurable_F} and Theorem \ref{Kuratowski}, there exists a measurable function $f : G\to G$ such that for all $z\in G$, \begin{equation}\label{measurable_function_conj}f(z)z(f(z))^{-1}\in T.\end{equation}

With the flow orbit, we can make a semi-fibration on Lie group to use torus property as in the previous theorem in the tori bundle case.
We note that for all $v\in Lie(G)$, $v_{\mathrm{inj}}:=\frac{\delta_{\mathrm{inj}}}{\norme{v}_2+1}v\in B(0,\delta_\mathrm{inj})$.
Let $h(v)=f\left(\exp(v_\mathrm{inj})\right)$.
Writing $T^'=h(v)^{-1}T h(v)$,
by injectivity of $\exp$ on $B(0,\delta_\mathrm{inj})$ and the definition of $f$, $v_{\mathrm{inj}}\in Lie(T^')$. Thus, for all $t\in\mathbb{R}$, $tv\in Lie(T^')$.
Taking the exponential shows that $H_v\subset T^'$, 
since $T^'$ is closed.
Thus, \eqref{inclusion_orbit} is satisfied by $h$.
\subsection{Main examples of compact Lie groups where the results can be applied}

\begin{exemple}[Torus]

The simplest example of a compact Lie group is a torus $\mathbb{T}^d$, which is abelian. This is the framework of the previous sections. In particular with $\mathbb{T}^2$, a measurable velocity vector $v(x)$, and for a single fiber to torus, the flow:
\[g_t : (x,y)\in \mathbb{T}^2\mapsto (x,y+tv(x))\]
\end{exemple}

\begin{exemple}[Spinorial groups]

The spin group for $n\geq 2$, $Spin(n)$ is a compact and connected {Lie} group, which allows us to use it as an example.
\end{exemple}
\begin{exemple}[Orthogonal groups]

The orthogonal groups for $n\geq 2$ $SO_n(\mathbb{R})$ are connected compact {Lie} groups, non abelian if $n\ge3$.
We can consider the flow $g_t : M\in SO_3(\mathbb{R})\mapsto exp(tA)M$ with $A$ an antisymmetric matrix.
\end{exemple}

\begin{exemple}[Special unitary group]

A special unitary group for $n\geq 2$, $SU_n(\mathbb{R})$ is also a connected compact {Lie} group and so we can use the previous theorems. Moreover, these {Lie} groups are even simply connected. They are always non-abelian groups.
The simplest example is $SU_2(\mathbb{R})$ which is isomorphic to the hypersphere $\mathbb{S}^3$ of $\mathbb{R}^4$.
The flow \[g_t : M\in SU_2(\mathbb{R}) \mapsto \exp(tA)M\] is well defined for $A\in \mathcal{M}_2(\mathbb{R})$ such that $^t A=-A^*$.
\end{exemple}

\end{document}